\newcommand{\fgl}{\mathfrak{gl}}
\newcommand{\fh}{\mathfrak{h}}
\newcommand{\fz}{\mathfrak{z}}
\newcommand{\bc}{\mathbb{C}}
\newcommand{\bp}{\mathbb{P}}
\newcommand{\br}{\mathbb{R}}
\newcommand{\bz}{\mathbb{Z}}
\newcommand{\mA}{\mathcal{A}}
\newcommand{\gll}{\mathfrak{gl}_2}
\newcommand{\gln}{\mathfrak{gl}_n}
\newcommand{\yt}{Y(\mathfrak{gl}_2)}
\newcommand{\yn}{Y(\mathfrak{sl}_n)}
\newcommand{\yg}{Y(\mathfrak{g})}
\newcommand{\sln}{\mathfrak{sl}_n}
\newcommand{\fsl}{\mathfrak{sl}}
\newcommand{\bC}{\mathbb C^n}
\newcommand{\tr}{\operatorname{tr}}
\newcommand{\Tp}{T^{(p)}}
\newcommand{\cTp}{\mathcal T^{(p)}}
\newcommand{\cTone}{\mathcal T^{(1)}}
\newcommand{\bw}[1]{\bigwedge^{\raisebox{-0.4ex}{\scriptsize\(#1\)}}}
\newcommand{\id}{\operatorname{id}}
\newcommand{\Flip}{\mathrm{Flip}}
\newcommand{\End}{\operatorname{End}}
\newcommand{\cR}{\mathcal{R}}
\DeclareMathOperator{\ev}{ev}
\DeclareMathOperator{\gr}{gr}
\DeclareMathOperator{\Hom}{Hom}
\DeclareMathOperator{\qdet}{qdet}
\DeclareMathOperator{\sym}{sym}
\DeclareMathOperator{\diag}{diag}
\newtheorem{proposition}{Proposition}[section]
\newtheorem{lemma}[proposition]{Lemma}
\newtheorem{theorem}[proposition]{Theorem}
\newtheorem{corol}[proposition]{Corollary}
\newtheorem*{remark}{Remark}
\newtheorem{conj}[proposition]{Conjecture}
\theoremstyle{definition}
\newtheorem{definition}{Definition}[section]
\title{Spectra of Bethe subalgebras of $Y(\mathfrak{gl}_n)$ in tame representations}
\author{Aleksei Ilin, Inna Mashanova-Golikova and Leonid Rybnikov}
\begin{document}
\maketitle

\begin{abstract} We study the eigenproblem for Bethe subalgebras of the Yangian $Y(\mathfrak{gl}_n)$ in tame representations, i.e. in finite dimensional representations which admit Gelfand-Tsetlin bases. Namely, we prove that for any tensor product of skew modules $V=\otimes_{i=1}^k V_{\lambda_i \setminus \mu_i}(z_i)$ over the Yangian $Y(\mathfrak{gl}_n)$ with generic $z_i$'s, the family of Bethe subalgebras $B(X)$ with $X$ being a regular element of the maximal torus of $GL_n$ (or, more generally, with $X \in \overline{M_{0,n+2}}$) acts with a cyclic vector on $V$. Moreover, for $X$ in the real form of $\overline{M_{0,n+2}}$ which is the closure of regular unitary diagonal matrices we show, that the family of subalgebras $B(X)$ acts with simple spectrum on $\otimes_{i=1}^k V_{\lambda_i \setminus \mu_i}(z_i)$ for generic $z_i$'s where all $V_{\lambda_i \setminus \mu_i}(z_i)$ are Kirillov-Reshetikhin modules. In the subsequent paper we will use this to define a KR-crystal structure on the spectrum of a Bethe subalgebra on $V$.
\end{abstract}


\section{Introduction}

\subsection{Yangian and Bethe subalgebras}
The Yangian $Y(\fgl_n)$ is a Hopf algebra, historically one of the first examples of {\em quantum groups}. $Y(\fgl_n)$ is in certain sense unique Hopf algebra deforming the enveloping algebra $U(\fgl_n[t])$ (see \cite{drin}), where
$\fgl_n[t]$ is the Lie algebra of $\fgl_n$-valued polynomials. There is an action of the additive group $\bc$ by automorphisms on $Y(\fgl_n)$ deforming the action of $\bc$ on $U(\fgl_n[t])$ which shifts the variable $t$. For the details and links on $Y(\fgl_n)$, we refer the reader to the book \cite{M} by A.~Molev. 

There is a flat family of maximal commutative subalgebras $B(C)\subset Y(\fgl_n)$, called \emph{Bethe subalgebras}, parameterized by invertible diagonal matrices $C\in GL_n$ with pairwise different eigenvalues see e.g. \cite{nazol} which are stable under the $\bc$-action by shift automorphisms of $Y(\fgl_n)$. This family originates from the integrable models in statistical mechanics and algebraic Bethe ansatz. More precisely, the image of $B(C)$ in a tensor product of evaluation representations of $Y(\fgl_n)$ form a complete set of Hamiltonians of the XXX Heisenberg magnet chain, cf. \cite{baxter,kbi}.

According to \cite{ir2} the family of Bethe subalgebras extends to a bigger family $B(X)$ of commutative subalgebras in $Y(\fgl_n)$ with $X$ taking values in the Deligne-Mumford space $\overline{M_{0,n+2}}$ of stable rational curves with $n+2$ marked points: the subalgebra $B(C)$ corresponds to the non-degenerate rational curve with the marked points being $0,\infty$ and the eigenvalues of $C$, but there are also some new subalgebras corresponding to degenerate curves $X\in\overline{M_{0,n+2}}$. 
In particular, the subalgebra corresponding to the most degenerate \emph{caterpillar} curve corresponds to the \emph{Cartan} subalgebra $H\subset Y(\fgl_n)$, also known as the \emph{Gelfand-Tsetlin} subalgebra, generated by all centers of the smaller Yangians $Y(\fgl_1) \subset Y(\fgl_2) \subset \ldots \subset Y(\fgl_n)$ embedded in the standard way.

\subsection{Representations of $Y(\fgl_n)$}
It is possible to obtain Yangian representations from representations of $\fgl_n$ using the \emph{evaluation homomorphism} 
$$\ev: Y(\fgl_n) \to U(\fgl_n)$$
which gives a structure of $Y(\fgl_n)$ module on every $\fgl_n$-module called \emph{evaluation} $Y(\fgl_n)$-module. Also, from the $\bc$-action on $Y(\fgl_n)$, for any $z\ \in \bc$ we have an automorphism $\tau_z: Y(\fgl_n) \to Y(\fgl_n)$ (see Section \ref{def} for precise definitions), so we can twist any of the evaluation modules by such automorphism.
Namely, for any irreducible $\fgl_n$-module $V_{\lambda}$, we denote by $V_{\lambda}(z)$ the $Y(\fgl_n)$-module with the underlying space $V_\lambda$ and the $Y(\fgl_n)$ action given by $\ev \circ \tau_z$.

It is possible to generalize this construction of $Y(\fgl_n)$-modules using the \emph{centralizer construction} of the Yangian, due to Olshansky \cite{olsh}. Namely, consider the embedding $\fgl_k\subset \fgl_{n+k}$ as the subalgebra of lower-right block $k\times k$-matrices, then for any $k \geq 0$ there is a homomorphism
$$\eta_k: Y(\fgl_n) \to U(\fgl_{n+k})^{\fgl_k},$$ which is surjective modulo the center of $U(\fgl_{n+k})$
(in particular, we have $\eta_0 = \ev$). Let $V_{\lambda}$ be an irreducible representation of $\fgl_{n+k}$ with the highest weight $\lambda$. Consider the restriction of $V_{\lambda}$ to $\fgl_k$:
$$V_{\lambda} = \bigoplus_{\mu} M_{\lambda \mu} \otimes V_{\mu},$$ where $M_{\lambda \mu} : = \Hom(V_{\mu}, V_{\lambda})$ is the multiplicity space with action of $U(\fgl_{n+k})^{\fgl_k}$ and therefore is an irreducible representation of $Y(\fgl_n)$. Representations of this form are called {\em skew representation} of $Y(\fgl_n)$ because they depend on the \emph{skew Young tableau} $\lambda\setminus\mu$. 
If $M_{\lambda \mu}$ is any skew representation of $Y(\fgl_n)$ then we denote by $V_{\lambda \setminus \mu}(z)$ the (irreducible) representation where the action of $Y(\fgl_n)$ is given by $\eta_k \circ \tau_z$. We also call these representations {\it skew} representations of $Y(\fgl_n)$.

We also note that Yangian is a Hopf algebra. This allows us to consider tensor products of the representations above. In \cite{nt}, Nazarov and Tarasov introduce the class of \emph{tame} representations i.e. representations of the form $\bigotimes_{i=1}^k V_{\lambda_i \setminus \mu_i}(z_i)$ such that  $z_i - z_j \not \in \bz$ for all $i \ne j$. This is the class of irreducible representations of $Y(\fgl_n)$ such that the Cartan subalgebra $H\subset Y(\fgl_n)$ acts without multiplicities. So it is natural to expect similar properties for the action of Bethe subalgebras on this class of representations of $Y(\fgl_n)$. The eigenbasis for the Cartan subalgebra $H\subset Y(\fgl_n)$, known as the Gelfand-Tsetlin basis, is naturally indexed by semistandard skew Young tableaux and is described explicitly. The eigenbasis for a general Bethe subalgebra $B(X)$ is then a deformation of the Gelfand-Tsetlin basis (being itself much less explicit).    
 
\subsection{Bethe ansatz, cyclic vector and simplicity of spectrum}
The main problem in the XXX integrable system is the diagonalization of the subalgebras $B(C)$ in the corresponding representation of the Yangian. The standard approach is the \emph{algebraic Bethe ansatz} which gives an explicit formula the eigenvectors depending on auxiliary parameters satisfying some system of algebraic equations called \emph{Bethe ansatz equations}, see for example \cite{kr}. 

The questions we address in the present paper are closely related to the \emph{completeness} of the algebraic Bethe ansatz, i.e. to the problem whether the eigenvectors obtained by Bethe ansatz form a basis in $V$. This problem is extensively studied for many years, see e.g. \cite{mv03,mtv07,mtv09,mtv,vt}. As the first step towards the solution of this problem, it is necessary that the joint eigenvalues have no multiplicities. The latter is satisfied if and only if the following two conditions hold: first, there is a \emph{cyclic vector} for the Bethe subalgebra in $V$ (i.e. $v\in V$ such that $B(C)v=V$) and, second, the algebra $B(C)$ acts on $V$ semisimply.    

Let $X \in \overline{M_{0,n+2}}$ and consider the Bethe subagebra $B(X)$. 
We call $v \in V$ cyclic with respect to $B(X)$ if  $B(X) \cdot v = V$. 
\begin{conj}
$B(X)$ has a cyclic vector in any tame representation of $Y(\fgl_n)$ for all $X \in \overline{M_{0,n+2}}$.
\end{conj}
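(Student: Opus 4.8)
The plan is to establish cyclicity first at the deepest (caterpillar) point of the base and then to propagate it across the family over $\overline{M_{0,n+2}}$.

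I would begin at the caterpillar curve $X_0$, where $B(X_0)$ is the Gelfand--Tsetlin subalgebra $H$. As recalled above, $V$ being tame means exactly that $H$ acts on $V$ with multiplicity-free joint spectrum, i.e. with simple spectrum. A commutative algebra acting with simple spectrum always has a cyclic vector: set $v_0=\sum_b e_b$, the sum over the Gelfand--Tsetlin eigenbasis. Since the joint eigenvalues are pairwise distinct, for each $b$ the image of $H$ in $\End(V)$ contains the projector onto $e_b$ (Lagrange interpolation in the generators), so $e_b\in H\cdot v_0$ for all $b$ and hence $H\cdot v_0=V$. Thus the statement holds at $X_0$.

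Next I would show that cyclicity is an open condition on $X$. Using the flat family of \cite{ir2}, fix a finite set of generators $b_1(X),\dots,b_N(X)$ of $B(X)$, acting on the fixed space $V$ and depending algebraically on $X\in\overline{M_{0,n+2}}$. For a fixed vector $v$, cyclicity of $B(X)$ at $v$ is the condition that the vectors $w\bigl(b(X)\bigr)\,v$, with $w$ ranging over words of length less than $\dim V$, span $V$; this is the non-vanishing of some maximal minor, hence an open condition on the pair $(X,v)$. As the projection $\overline{M_{0,n+2}}\times V\to\overline{M_{0,n+2}}$ is open, the cyclic locus $U=\{X: B(X)\text{ has a cyclic vector on }V\}$ is open, and, containing $X_0$, it is dense since $\overline{M_{0,n+2}}$ is irreducible. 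Equivalently, the dimension of the image $\bar B(X)$ of $B(X)$ in $\End(V)$ is lower semicontinuous and attains its generic value $\dim V$.

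The remaining and genuinely hard step is to pass from the dense open set $U$ to every $X$. Here I would exploit the factorization structure of the boundary: a boundary stratum corresponds to a nodal curve, and by \cite{ir2} the subalgebra $B(X)$ is then assembled from Bethe subalgebras of the irreducible components acting on the corresponding tensor factors of $V$. One proves cyclicity by induction on the number of components, the base case being $X_0$, with the inductive step producing a cyclic vector for the glued curve from cyclic vectors of the pieces. The crux is to guarantee $\dim\bar B(X)=\dim V$ at the boundary: by lower semicontinuity this dimension could drop, which occurs precisely when two Gelfand--Tsetlin eigenlines merge into a single generalized eigenspace on which $B(X)$ acts as a scalar. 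Genericity of the parameters $z_i$ forbids such resonances and lets the induction go through for all $X$; the difficulty in removing this assumption --- running the argument for arbitrary tame $z_i$, where resonances among the $z_i$ and the weights may occur --- is exactly the obstacle that keeps the full statement at the level of a conjecture.
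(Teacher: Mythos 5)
The statement you chose is a \emph{conjecture} in the paper: the authors do not prove it, and neither does your proposal --- your own last sentence concedes that the argument only goes through for generic $z_i$, which is precisely the weaker Theorem~A. Measured even against that theorem, there is a genuine gap. Your first two steps (cyclicity at the caterpillar point from the simple spectrum of the Gelfand--Tsetlin subalgebra \cite{nt}, plus openness of the cyclic locus) are correct, but they are exactly the ``easy'' observation the paper itself makes in the introduction and explicitly flags as insufficient: for each fixed tame $(z_1,\dots,z_k)$ they give a \emph{dense open} set of good $X$ in $\overline{M_{0,n+2}}$, and since cyclicity is an open, not a closed, condition, no amount of density in $X$ ever reaches the boundary points. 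Your proposed remedy --- induction over boundary strata, with ``genericity of the $z_i$ forbidding resonances'' --- is not carried out, and it rests on a wrong picture of the boundary: the irreducible components of a nodal curve $X$ record degenerations of the torus parameter $C$, not of the evaluation parameters, so they do not correspond to tensor factors of $V$. By Theorem~\ref{result1}, the limit subalgebra decomposes as $\psi_{n-k_0}(B(X_0))\otimes_{\bc} i_{k_0}(B(C))\otimes_{ZU(\bigoplus_{\alpha\ne0}\fgl_{k_\alpha})}\bigotimes_{\alpha\ne0}\mA_{X_\alpha}$ inside $Y(\fgl_n)$ --- a product of a sub-Yangian Bethe algebra and finite-dimensional shift-of-argument algebras --- and $\bigotimes_i V_{\lambda_i\setminus\mu_i}(z_i)$ admits no matching factorization, so cyclic vectors for the pieces cannot simply be glued.

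What actually closes the gap in the paper (for generic $z$) is a different mechanism, with one essential external input. The authors pick the highly \emph{non}-generic point $z_i\in\bz$, $|z_i-z_j|\gg 0$, at which the entire tensor product collapses into a \emph{single} skew representation (Proposition~\ref{skew}); then the centralizer construction identifies $\eta_k(B(X)\otimes A_0)=\mA_X\subset U(\fgl_{n+k})^{\fgl_k}$ for every $X$ (Proposition~\ref{isomor}), and the theorem of \cite{hkrw} guarantees that every limit shift-of-argument subalgebra $\mA_X$ acts with a cyclic vector on every multiplicity space $M_{\lambda\mu}$ (Proposition~\ref{c1}, whence Theorem~\ref{cons}). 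This produces one value of $z$ at which the bad locus in $X$ is \emph{empty}; completeness of $\overline{M_{0,n+2}}$ then makes the projection of the closed bad locus $J\subset\overline{M_{0,n+2}}\times\bc^n$ to $\bc^n$ a proper closed subset (Lemma~\ref{ta}, Theorem~\ref{main1}). The result from \cite{hkrw} is the ingredient your sketch has no substitute for: without it, neither openness, nor density, nor stratum-by-stratum induction reaches the boundary of $\overline{M_{0,n+2}}$ --- and even with it, one only obtains Theorem~A, while the conjecture for \emph{all} tame parameters remains open in the paper as well.
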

We will discuss some facts supporting this conjecture in Section \ref{cyclic}. 

In fact, it is easy to see that the Conjecture is true for \emph{generic} $X,z_1,\ldots,z_k$. Indeed, consider the parameter space $\overline{M_{0,n+2}} \times \bc^n$. The condition that $B(X)$ acts with a cyclic vector on $\bigotimes_{i=1}^k V_i(z_i)$ determines a Zariski open subset of $\overline{M_{0,n+2}} \times \bc^n$, therefore once we have a single point $(X, z_1, \ldots, z_n) \in \overline{M_{0,n+2}} \times \bc^n$ such that $B(X)$ acts with a cyclic vector on $\bigotimes_{i=1}^k V_i(z_i)$  we automatically have the same property for generic $(X, z_1, \ldots, z_n)$. On the other hand, according to \cite{nt} the Gelfand-Tsetlin subalgebra of $Y(\fgl_n)$ (which is a particular case of $B(X)$) acts without multiplicities on any tame representation, so has a cyclic vector -- hence this Zariski-open subset is non-empty. The problem with this argument is that it does not give any representation such that $B(X)$ acts cyclicly for \emph{all} $X\in \overline{M_{0,n+2}}$. Now let us formulate the first main theorem of the present paper.

\medskip

\noindent {\bf Theorem A.} \emph{ There is a Zariski open dense subset of $I\subset\bc^n$
such that $B(X)$ has a cyclic vector in $V$ for all $X \in \overline{M_{0,n+2}}$ and $(z_1,\ldots, z_k)\in I$.  }

\medskip







Particularly, in a generic tame representation in the sense of \cite{nt} every Bethe subalgebra $B(X)$ with $X\in \overline{M_{0,n+2}}$ acts with a cyclic vector. This allows to study the joint spectrum of $B(X)$ in a given tame representation as a finite covering of $\overline{M_{0,n+2}}$ and reformulate some properties of this spectrum in terms of geometry of Deligne-Mumford compactifications.
We will discuss other useful consequences of Theorem A in Sections \ref{cyclic}, \ref{6}, see also \ref{simple}.

\subsection{Quantum cohomology of quiver varieties.}

In \cite{mo} Maulik and Okounkov describe an action of $Y(\fgl_n)$ on the localized equivariant cohomology of type A quiver varieties such that some elements from Bethe subalgebras with $C \in T^{reg}$ act as operators of quantum multiplication by some cohomology classes (with $C$ being the quantum parameter). According to a conjecture of \cite{mo} the quantum cohomology ring coincides with the image of the corresponding Bethe subalgebra.  

The $Y(\fgl_n)$-modules arising in this construction have the form $\bigotimes_{i=1}^k W_i(z_i)$ with certain parameters $z_i, i = 1, \ldots, k$, where all $W_i$'s are fundamental representations of $\fgl_n$. In particular, this conjecture implies that $B(C)$ acts with a cyclic vector on any tensor product of fundamental $Y(\fgl_n)$-modules with generic evaluation parameters $z_i$, since the unity class is always cyclic for the quantum cohomology ring. Theorem~A gives some evidence for this conjecture.

Also in \cite{varn} it is proved that the image of Gelfand-Tsetlin subalgebra $H\subset Y(\fgl_n)$, which has the form $B(X)$ for some $X \in \overline{M_{0,n+2}}$ is generated bythe operators of (classical) multiplication by equivariant cohomology classes of the quivier variety. Note that $H$ corresponds to so-called caterpilar curves, i.e. one of the most degenerate Bethe subalgebras corresponding to $0$-dimensional strata of $\overline{M_{0,n+2}}$, so this particular degenerate curve corresponds to the classical limit of the quantum cohomology ring. In this perspective it is interesting to understand the geometric nature of other types of limit Bethe subalgebras.

\subsection{Plan of the proof.}
The main idea of the proof is to reduce the eigenproblem for $XXX$ model to that for the quantum \emph{shift of argument subalgebras}, cf. \cite{r06}. 
The simplest example is as follows. Suppose that $k=1$, i.e. we study the question about a cyclic vector in the representation of the form $V(z)$, where $V$ is any irreducible representation of $\fgl_n$. It is well-known, see \cite{nazol}, that $\ev_z(B(C)) = \mathcal{A}_{C^{-1}}$ for any $C \in T^{reg}$ and $z \in \bc$ with $\mathcal{A}_{C^{-1}}\subset U(\fgl_n)$ being the shift of argument subalgebra defined in \cite{r06}. On the other hand, for the shift of argument subalgebras it is known (see \cite{ffr}) that any $\mathcal{A}_C$ with $C \in T^{reg}$ acts with a cyclic vector (moreover, with simple spectrum) on any irreducible representation of $\fgl_n$.

In the general situation,
the set of values of the parameters $z_i$ such that Theorem~A is \emph{not} satisfied is Zariski-closed, since the variety $\overline{M_{0,n+2}}$ is complete. So it is sufficient to prove that Theorem~A is satisfied at least for some values of the $z_i$'s. The most convenient choice of such particular value of the evaluation parameters is $z_i\in\bz$ with $|z_i - z_j| \gg0$ since then $\bigotimes_{i=1}^k V_{\lambda_i \setminus \mu_i}(z_i)$ is in fact a skew representation of the Yangian. This reduces Theorem~A to the case of one tensor factor which is a multiplicity space $M_{\lambda\mu}$. The image of $B(X)$ under the centralizer construction map $\eta_k: Y(\fgl_n)\to U(\fgl_{n+k})^{\fgl_k}$ turns to coincide with certain limit shift of argument subalgebra in $U(\fgl_{n+k})$, hence we reduce Theorem~A to the similar question on shift of argument subalgebras. Finally, according to \cite{hkrw}, the corresponding 
shift of argument subalgebra acts with a cyclic vector on any representation of $U(\fgl_{n+k})^{\fgl_k}$ in the multiplicity space $M_{\lambda\mu}$ for any $\lambda$ and $\mu$.


\subsection{Simple spectrum property and monodromy conjectures}
\label{simple}
Theorem A implies that once $B(C)$ acts semisimply, it has simple spectrum (i.e. the joint eigenvalues have no multiplicities). The usual sufficient condition for this is the existence of a Hermitian scalar product such that $B(C)^+=B(C)$ i.e. all elements of $B(C)$ act by normal operators. We give sufficient conditions on the representation of the Yangian guaranteeing that such scalar product exists provided that $C$ belongs either to the closure of the set of regular elements of the compact real torus $T_{comp}\subset T$ or to that of the split real torus $T_{split}\subset T$. 

The case of the compact torus goes back to Kirillov and Reshetikhin \cite{kr}. Recall that Kirillov-Reshetikhin module is an irreducible $\fgl_n$-module corresponding to a rectangular Young diagram (equivalently, it is the highest irreducible component  of a symmetric power of a fundamental representation of $\fgl_n$). The second main result of this paper is the following

\medskip

\noindent {\bf Theorem B.} 
\emph{Suppose that all $V_i$'s are Kirillov-Reshetikhin modules. Let $l_i \times r_i$ be the size of the corresponding Young diagram. Suppose that  
$z_i = \frac{l_i}{2} - \frac{r_i}{2} + i x_i $, where $x_i \in \br$.
Then, for $(x_1, \ldots, x_k)  \in \br^k$ from Zariski open subset, $B(X)$ has simple spectrum on $\bigotimes_{i=1}^k V_i(z_i)$ for all $X \in \overline{M_{0,n+2}^{comp}}$.}

\medskip

Theorem B allows to regard the set of eigenlines for $B(X)$ in $\bigotimes_{i=1}^k V_i(z_i)$ as an \emph{unramified} covering of the space $\overline{M_{0,n+2}^{comp}}$. In particular, we get the monodromy action of the fundamental group $\pi_1(\overline{M_{0,n+2}^{comp}})$ (which is natural to call (pure) \emph{affine cactus group}) on the spectrum of Bethe subalgebras.  Moreover, it is possible to define the structure of a Kirillov-Reshetikhin crystal on this spectrum, following the strategy of \cite{hkrw} (see \cite{kmr} and Section~\ref{further} for the details). We expect that the action of the affine cactus group on this set is given by partial Schutzenberger involutions on the KR-crystal. 

The closure of the set of regular points of the split real torus $T_{split}$ in $\overline{M_{0,n+2}}$ is the real form  $\overline{M_{0,n+2}^{split}}$. Our third main result is

\medskip

\noindent {\bf Theorem C.} 
\emph{Let $V_i, i = 1, \ldots, k$ be a set of skew representations of $Y(\fgl_n)$. Then, for $(x_1, \ldots, x_k)  \in \br^k$ from Zariski open subset, $B(X)$ has simple spectrum on $\bigotimes_{i=1}^k V_i(x_i)$ for all $X \in \overline{M_{0,n+2}^{split}}$.}

\medskip

Similarly to the case of the compact real form, this gives an action of the usual cactus group $\pi_1(\overline{M_{0,n+2}^{split}})$ on the spectrum of a Bethe algebra. Specializing the parameter of the Bethe algebra to the \emph{caterpillar} point of $\overline{M_{0,n+2}^{split}}$ we get an action of the cactus group on the  Gelfand-Tsetlin basis in the tensor product of skew representations. The latter is indexed by collections of semistandard skew Young tableaux, and we conjecture that the action of the cactus group is given by Bender-Knuth involutions, similarly to the construction of Chmutov, Glick and Pylyavskyy \cite{cgp}. 

\subsection{The paper is organized as follows} In Section 2 we give some preliminaries on Yangians. In Section 3 we discuss Deligne-Mumford compactification and its real forms. In Section 4 we define the families of commutative subalgebras in $U(\fgl_n)$ and in $Y(\fgl_n)$ and describe relations between them. In Section 5 we prove Theorem A. In section 6 we prove Theorems B and C. In Section 7 we discuss further directions of the research.

\subsection{Acknowledgements}
We thank Vasily Krylov for many valuable comments on the early draft of the paper. We thank Joel Kamnitzer and Vitaly Tasrasov for extremely useful discussions. The study has been funded within the framework of the HSE University Basic Research Program. The work of A.I. and L.R. on Theorem C was supported by the RSF grant 19-11-00056. I.M.-G. was supported by RFBR grant number 19-31-90124.  L.R. was partially supported by the Foundation for the Advancement of Theoretical Physics and Mathematics
 ``BASIS". A.I. is a Young Russian Mathematics award winner and would like to thank its sponsors and jury.

\section{Yangian and universal enveloping algebra}
\label{def}
\subsection{Yangian of $\fgl_n$ and $\fsl_n$}
\begin{definition}
{\itshape Yangian of $\fgl_n$} is a complex unital associative algebra with countably many generators $t_{ij}^{(1)}, t_{ij}^{(2)}, \ldots $ where $1 \leqslant i,j \leqslant n$, and the defining relations
$$ [t_{ij}^{(r+1)},t_{kl}^{(s)}] - [t_{ij}^{(r)},t_{kl}^{(s+1)}] = t_{kj}^{(r)}t_{il}^{(s)} - t_{kj}^{(s)}t_{il}^{(r)}, $$
where $r,s \geqslant 0$ and $t_{ij}^{(0)} = \delta_{ij}$. This algebra is denoted by $Y(\fgl_n)$.
\end{definition}

There is a filtration on the Yangian determined by $\deg t_{ij}^{(r)}=r$. The $t_{ij}^{(r)}$'s are PBW generators with respect to this filtration. 

It is convenient to consider the formal series
$$t_{ij}(u) = \delta_{ij} + t_{ij}^{(1)} u^{-1} + t_{ij}^{(2)} u^{-2} + \ldots \in Y(\fgl_n)[[u^{-1}]].$$

We denote by $T(u)$ the matrix whose $ij$-entry is $t_{ij}(u)$.
We regard this matrix as the following element of $Y(\fgl_n)[[u^{-1}]] \otimes \rm{End} \ \mathbb{C}^n$:
$$T(u) = \sum_{i,j=1}^n t_{ij}(u) \otimes e_{ij},$$
where $e_{ij}$ stands for the standard matrix units.

Consider the algebra
$$Y(\fgl_n)[[u^{-1}]] \otimes (\rm{End} \ \mathbb{C}^n)^{\otimes n}.$$
For any $a \in \{1, \ldots ,n\}$ there is an embedding $$
i_a: Y(\fgl_n)[[u^{-1}]] \otimes \rm{End} \ \mathbb{C}^n\to Y(\fgl_n)[[u^{-1}]] \otimes (\rm{End} \ \mathbb{C}^n)^{\otimes n}
$$
which is an identity on $Y(\fgl_n)[[u^{-1}]]$ and embeds $\rm{End} \ \mathbb{C}^n$ as the $a$-th tensor factor in $(\rm{End} \ \mathbb{C}^n)^{\otimes n}$. Denote by $T_a(u)$ the image of $T(u)$ under this embedding.

Consider $R(u) = 1 - Pu^{-1} \in (\End \bc^n)^{\otimes 2}$, where $P = \sum_{i,j=1}^n e_{ij} \otimes e_{ji}$.
Then it is well-known that one can rewrite defining relations for $Y(\fgl_n)$ as follows:
$$R(u-v) T_1(u) T_2(v) = T_2(u) T_1(v) R(u-v).$$

$Y(\fgl_n)$ is a Hopf algebra with the comultiplication:
$$\Delta: Y(\fgl_n) \to Y(\fgl_n) \otimes Y(\fgl_n), t_{ij}(u) \mapsto \sum_{k=1}^n t_{ik}(u) \otimes t_{kj}(u)$$
and the antipode
$$S: T(u) \to T^{-1}(u).$$

For any $1 \leq a_1 \leq \ldots \leq a_k \leq n$ and $1 \leq b_1 \leq \ldots \leq b_k \leq n$ by definition put
$$t^{a_1 \ldots a_k}_{b_1 \ldots b_k}(u) = \sum_{\sigma \in S_k} (-1)^\sigma \, \cdot \, t_{a_{\sigma(1)}b_1}(u) \ldots t_{a_{\sigma(k)}b_k}(u-k+1) = \sum_{\sigma \in S_k} (-1)^\sigma \, \cdot \, t_{a_1 b_{\sigma(1)}}(u-k+1) \ldots t_{a_k b_{\sigma(k)}}(u). $$
We call series $t^{a_1 \ldots a_k}_{b_1 \ldots b_k}(u)$ {\it quantum minors}.

We also use Yangian $Y(\fsl_n)$ for the Lie algebra $\fsl_n$.
\begin{definition}
Yangian $Y(\fsl_n)$ is the subalgebra of $Y(\fgl_n)$ which consists of all elements stable under the automorphisms $T(u) \to f(u) T(u)$ for all $f(u) \in 1 + u^{-1}\bc[[u^{-1}]]$.
\end{definition}
\begin{proposition}(\cite[Theorem 1.8.2, Proposition 1.84.]{M})
\label{tensor2}
\begin{enumerate}
\item $Y(\fgl_n) = Y(\fsl_n) \otimes_{\bc} Z(Y(\fgl_n))$, where $Z(Y(\fgl_n))$ is the center of $Y(\fgl_n)$;
\item $Y(\fsl_n)$ is a Hopf subalgebra of $Y(\fgl_n)$.
\end{enumerate}
\end{proposition}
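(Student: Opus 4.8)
The plan is to separate the central part of $Y(\fgl_n)$ using the quantum determinant. Set $d(u) = \qdet T(u) = t^{1\ldots n}_{1\ldots n}(u) = 1 + d_1 u^{-1} + d_2 u^{-2} + \cdots$. First I would recall the standard structural facts (provable directly from the $RTT$-relations, or citable from \cite{M}) that each $d_r$ is central and that $d_1, d_2, \ldots$ generate $Z(Y(\fgl_n))$. The second input is the behaviour of the rescaling automorphism $\mu_f \colon T(u) \mapsto f(u) T(u)$, $f(u) \in 1 + u^{-1}\bc[[u^{-1}]]$, on the quantum determinant: since each of the $n$ factors in the defining sum is shifted, multilinearity gives $\mu_f(d(u)) = f(u) f(u-1) \cdots f(u-n+1)\, d(u)$. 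By definition $Y(\fsl_n)$ is the set of elements fixed by every $\mu_f$.

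The key construction is the extraction of a scalar factor. I would first prove the elementary lemma that over any commutative ring $R$ and for any $g(u) \in 1 + u^{-1}R[[u^{-1}]]$ there is a unique $f(u) \in 1 + u^{-1}R[[u^{-1}]]$ with $f(u)f(u-1)\cdots f(u-n+1) = g(u)$, solved by matching coefficients of $u^{-r}$ recursively (the coefficient of $u^{-r}$ in the product equals $n f_r$ plus a polynomial in $f_1,\ldots,f_{r-1}$, and $n \neq 0$). Applying this with $R = Z(Y(\fgl_n))$ and $g = d$ produces a series $f(u)$ with central coefficients such that $\widetilde T(u) := f(u)^{-1} T(u)$ satisfies $\qdet \widetilde T(u) = 1$. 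Applying $\mu_g$ to the identity $f(u)\cdots f(u-n+1) = d(u)$ and invoking uniqueness shows $\mu_g(f(u)) = g(u) f(u)$, whence $\mu_g(\widetilde T(u)) = (g(u)f(u))^{-1} g(u) T(u) = \widetilde T(u)$. Thus all entries $\widetilde t_{ij}(u)$ lie in $Y(\fsl_n)$. Since $T(u) = f(u)\widetilde T(u)$ with $f$ central, every generator $t_{ij}^{(r)}$ is a sum of products of the $\widetilde t_{ij}^{(s)}$ and the $d_s$, so $Y(\fgl_n) = A \cdot Z$, where $Z = Z(Y(\fgl_n))$ and $A \subseteq Y(\fsl_n)$ is the subalgebra generated by the $\widetilde t_{ij}^{(s)}$.

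It remains to upgrade this to a tensor decomposition and to identify $A$ with $Y(\fsl_n)$; this is the main obstacle. For the tensor decomposition I would pass to the associated graded for the filtration $\deg t_{ij}^{(r)} = r$, where $\gr Y(\fgl_n) \cong U(\fgl_n[t])$ by the PBW property noted after the definition. Writing $\fgl_n = \fsl_n \oplus \fz$ with $\fz$ the one-dimensional centre gives $U(\fgl_n[t]) = U(\fsl_n[t]) \otimes U(\fz[t])$, and one checks on principal symbols that $A$ maps into the first tensor factor (the normalization $\qdet \widetilde T(u) = 1$ kills the trace direction) while the $d_r$ map into the second. Hence the multiplication map $A \otimes_{\bc} Z \to Y(\fgl_n)$ is injective on associated graded, so it is an isomorphism; in particular the $d_r$ are algebraically independent and $A \cap Z = \bc$. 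Finally, for $y \in Y(\fsl_n)$ write $y = \sum_\alpha a_\alpha z_\alpha$ in this decomposition, with $a_\alpha \in A$ and $z_\alpha$ monomials in the $d_r$; since the $a_\alpha \in Y(\fsl_n)$ are $\mu_g$-fixed while the $z_\alpha$ transform by nontrivial powers of $f(u)\cdots f(u-n+1)$, the $\mu_g$-invariance of $y$ together with uniqueness of the decomposition forces every term with $z_\alpha \neq 1$ to vanish, i.e. $y = a_0 \in A$. Thus $A = Y(\fsl_n)$, proving (1).

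For (2) I would exploit that $f(u)$ is grouplike. Applying $\Delta$ to $f(u)\cdots f(u-n+1) = d(u)$ and using that $d(u)$ is grouplike, $\Delta(d(u)) = d(u) \otimes d(u)$, together with uniqueness of the solution over $Z \otimes Z$, yields $\Delta(f(u)) = f(u) \otimes f(u)$. Then from $\Delta(t_{ij}(u)) = \sum_k t_{ik}(u) \otimes t_{kj}(u)$ one computes $\Delta(\widetilde t_{ij}(u)) = \sum_k \widetilde t_{ik}(u) \otimes \widetilde t_{kj}(u)$, so $\Delta(Y(\fsl_n)) \subseteq Y(\fsl_n) \otimes Y(\fsl_n)$. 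Likewise, since grouplike elements satisfy $S(f(u)) = f(u)^{-1}$ and $S(T(u)) = T(u)^{-1}$, we get $S(\widetilde t_{ij}(u)) = (\widetilde T(u)^{-1})_{ij}$, whose entries are quantum cofactors of $\widetilde T(u)$ (polynomials in the $\widetilde t_{ij}^{(s)}$ because $\qdet \widetilde T(u) = 1$) and hence lie in $Y(\fsl_n)$. Therefore $Y(\fsl_n)$ is stable under $\Delta$ and $S$, so it is a Hopf subalgebra. The main difficulty throughout is the associated-graded identification in (1); everything else is formal manipulation with the quantum determinant.
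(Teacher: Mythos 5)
The paper itself offers no proof of this proposition: it is quoted from Molev's book \cite[Theorem 1.8.2, Proposition 1.8.4]{M}, so your attempt can only be measured against the argument given there. Your proposal reconstructs essentially that argument: the unique ``shifted $n$-th root'' $f(u)$ of $\qdet T(u)$ with central coefficients, the $\mu_g$-invariance of the renormalized matrix $\widetilde T(u)=f(u)^{-1}T(u)$, an associated-graded argument for the tensor factorization, and grouplikeness of the central series for the Hopf property. The uniqueness lemma, the identity $\mu_g(f(u))=g(u)f(u)$, the invariance argument forcing $A=Y(\fsl_n)$, and the treatment of $\Delta$ and $S$ in part (2) are all correct and are the standard steps.

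There is, however, one step that fails as written. You pass to the associated graded for the filtration $\deg t_{ij}^{(r)}=r$ and assert $\gr Y(\fgl_n)\cong U(\fgl_n[t])$ ``by the PBW property noted after the definition.'' That is not what this filtration gives: with $\deg t_{ij}^{(r)}=r$ the defining relations show that the associated graded algebra is \emph{commutative} (it is the function algebra of the first congruence subgroup of $GL_n[[t^{-1}]]$), and the paper only claims PBW generation for it. The isomorphism $\gr Y(\fgl_n)\cong U(\fgl_n[t])$ holds for the shifted filtration $\deg t_{ij}^{(r)}=r-1$ \cite[Proposition 1.5.2]{M}, and your argument genuinely needs the enveloping-algebra structure, since it invokes $U(\fgl_n[t])=U(\fsl_n[t])\otimes U(\fz[t])$. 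The repair is to use the shifted filtration throughout: there the correction terms $f_s\,\widetilde t_{ij}^{\,(r-s)}$ have degree $\le r-2<r-1$, so the top symbols are $\sigma(\widetilde t_{ij}^{\,(r)})=\bigl(E_{ij}-\tfrac{\delta_{ij}}{n}\mathbf{1}\bigr)t^{r-1}\in\fsl_n[t]$ and $\sigma(f_r)=\tfrac1n\mathbf{1}t^{r-1}\in\fz[t]$, as your argument requires; note that for $\deg t_{ij}^{(r)}=r$ these corrections have the \emph{same} degree $r$ and the symbol computation breaks down. A second, smaller point: ``one checks on principal symbols that $A$ maps into the first tensor factor'' is not automatic, because the associated graded of the subalgebra generated by the $\widetilde t_{ij}^{\,(r)}$ may a priori be larger than the algebra generated by their symbols (degrees can drop in polynomials of the generators). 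Closing this requires showing, from the Yangian relations, that commutators of the $\widetilde t$'s are again polynomials in the $\widetilde t$'s of smaller degree; this is exactly the PBW-type work carried out in \cite{M}, and it is the genuine content hiding behind that sentence.
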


\begin{remark}
This means that one can also realize $Y(\fsl_n)$ as the quotient of $Y(\fgl_n)$ by a maximal ideal of the center $Z(Y(\fgl_n))$, see \cite[Corollary 1.8.3]{M}.
\end{remark}

\subsection{Evaluation homomorphism and evaluation modules}
Let $z \in \bc$. It is well-known that the map
$$\tau_z: Y(\fgl_n) \to Y(\fgl_n), T(u) \mapsto T(u-z)$$
is an automorphism of $Y(\fgl_n)$.

Let $E = \sum\limits_{i,j=1}^n E_{ij}\otimes e_{ij}=\left( e_{ij}\right)_{i,j = 1, \ldots, n}\in U(\fgl_n)\otimes Mat_n$ be the $n\times n$-matrix with the $(i,j)$-th coefficient being $E_{ij}\in U(\fgl_n)$ (here $E_{ij} \in \fgl_n$ is the matrix with 1 on $(i,j)$-entry and zeros on other entries).
It is well known that the map 
$$\ev: Y(\fgl_n) \to U(\fgl_n), T(u) \mapsto 1 + E u^{-1}$$ is a homomorphism of algebras.
By definition put
$$\ev_z := \ev \circ \tau_z$$

Any $\fgl_n$-module $V$ can be regarded as a $Y(\fgl_n)$-module by means of $\ev = \ev_0$. Moreover, using $\ev_z$ one can produce a $1$-parametric family $V(z)$ of $Y(\fgl_n)$-modules. 



We also note that the elements $t_{ij}^{(1)}, 1 \leq i,j \leq n$ generate a copy of $U(\fgl_n)$ in $Y(\fgl_n)$. This allows to consider any representation of $Y(\fgl_n)$ as a representation of $U(\fgl_n)$, hence as a representation of the corresponding connected simply connected group $\tilde{GL_n}$.

\subsection{Some homomorphisms between Yangians}
Let us define two different embedding of $Y(\fgl_n)$ to $Y(\fgl_{n+k})$:
$$i_k: Y(\fgl_n) \to Y(\fgl_{n+k}); \ t^{(r)}_{ij} \mapsto t^{(r)}_{ij}$$
$$\varphi_k: Y(\fgl_n) \to Y(\fgl_{n+k}); \ t^{(r)}_{ij} \mapsto t^{(r)}_{k+i,k+j}$$

By definition, put
$$\omega_n: Y(\fgl_n) \to Y(\fgl_n); \ T(u) \mapsto (T(-u-n))^{-1}.$$
It is well-known that $\omega_n$ is an involutive automorphism of $Y(\fgl_n)$.
We define a homomorphism
$$\psi_k = \omega_{n+k} \circ \varphi_k \circ \omega_n: Y(\fgl_n) \to Y(\fgl_{n+k}).$$
Note that $\psi_k$ is injective. 

\begin{proposition} \cite[Lemma 2.12]{ir2}
\label{commuteyang} The homomorphisms $i_{k}$ and $\psi_{n}$ define an embedding
$i_{k}\otimes\psi_{n}: Y(\fgl_{n}) \otimes Y(\fgl_k) \hookrightarrow Y(\fgl_{n+k})$.
\end{proposition}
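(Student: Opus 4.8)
The plan is to prove two independent statements: first, that the images $i_k(Y(\fgl_n))$ and $\psi_n(Y(\fgl_k))$ commute inside $Y(\fgl_{n+k})$, so that the assignment $a\otimes b\mapsto i_k(a)\,\psi_n(b)$ defines an algebra homomorphism at all; and second, that this homomorphism is injective. Throughout write $N=n+k$, decompose the generating matrix $T(u)$ of $Y(\fgl_N)$ into blocks with $A(u)$ its top-left $n\times n$ block, and set $\hat T(u)=T(u)^{-1}$. By definition $i_k(Y(\fgl_n))$ is generated by the entries of $A(u)$.

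The first real step is to identify the image of $\psi_n=\omega_{N}\circ\varphi_n\circ\omega_k$. Since $\omega_k$ sends the generating matrix $S(u)$ of the source $Y(\fgl_k)$ to $S(-u-k)^{-1}$, while $\varphi_n$ embeds $Y(\fgl_k)$ as the bottom-right block and both $\varphi_n$ and $\omega_N$ are algebra homomorphisms (hence preserve matrix inverses), a direct computation gives
\[
\psi_n(S(u))=\big[\,\text{bottom-right }k\times k\text{ block of }T(u-n)^{-1}\,\big]^{-1}.
\]
Thus $\psi_n(Y(\fgl_k))$ is generated by the entries of the bottom-right block of $\hat T(u)$ (the shift by $-n$ being irrelevant to the subalgebra generated), and the problem reduces to showing that the entries of $A(u)$ commute with those of the bottom-right block of $\hat T(v)$.

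For this I would derive the mixed relation between $T$ and $\hat T=T^{-1}$: a short manipulation of the defining relation $R(u-v)T_1(u)T_2(v)=T_2(v)T_1(u)R(u-v)$, using $T_2(v)\hat T_2(v)=1$, yields
\[
\hat T_2(v)\,R(u-v)\,T_1(u)=T_1(u)\,R(u-v)\,\hat T_2(v).
\]
Inserting $R(u-v)=1-P(u-v)^{-1}$ and reading off the coefficient of $e_{ij}\otimes e_{ab}$ produces, for all indices,
\[
(u-v)\,[\hat t_{ab}(v),t_{ij}(u)]=\delta_{ib}\sum_{m}\hat t_{am}(v)t_{mj}(u)-\delta_{ja}\sum_{m}t_{im}(u)\hat t_{mb}(v).
\]
For an entry $t_{ij}(u)$ of $A$ (so $1\le i,j\le n$) and an entry $\hat t_{ab}(v)$ of the bottom-right block of $\hat T$ (so $n<a,b\le N$), both Kronecker deltas vanish, whence $[\hat t_{ab}(v),t_{ij}(u)]=0$. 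This proves the images commute, so $i_k\otimes\psi_n$ is a well-defined homomorphism.

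For injectivity I would pass to associated graded algebras for the filtration $\deg t_{ij}^{(r)}=r-1$, under which $\gr Y(\fgl_m)\cong U(\fgl_m[t])$ with $t_{ij}^{(r)}$ of symbol $E_{ij}t^{r-1}$ (see \cite{M}). The symbol of $i_k$ is the top-left block inclusion $\fgl_n[t]\hookrightarrow\fgl_N[t]$. Writing $\psi_n(S(u))$ as the Schur complement $\big(D-CA^{-1}B\big)(u-n)$, the correction term $CA^{-1}B$ is a sum of products of at least two generators and hence strictly lowers the filtration degree; therefore $\psi_n(s_{ab}^{(r)})$ has symbol $E_{n+a,n+b}t^{r-1}$, i.e. $\gr\psi_n$ is the bottom-right block inclusion $\fgl_k[t]\hookrightarrow\fgl_N[t]$. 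Consequently $\gr(i_k\otimes\psi_n)$ is the map $U(\fgl_n[t])\otimes U(\fgl_k[t])=U(\fgl_n[t]\oplus\fgl_k[t])\to U(\fgl_N[t])$ induced by the block-diagonal subalgebra, which is injective by PBW; injectivity of the graded map forces injectivity of $i_k\otimes\psi_n$. I expect the main obstacle to be precisely the identification of $\psi_n(Y(\fgl_k))$ as the inverse-corner (Schur-complement) subalgebra, together with the mixed relation above; once the two images are pinned down as the top-left block of $T$ and the bottom-right block of $T^{-1}$, both commutativity and injectivity follow cleanly.
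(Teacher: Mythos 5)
Your proof is correct and is essentially the argument behind the paper's own treatment: the paper gives no proof here, quoting the statement from \cite[Lemma 2.12]{ir2}, and the proof there rests on exactly the two ingredients you use. Namely, your mixed $T$--$T^{-1}$ relation (the standard commutation underlying Olshanski's centralizer construction, cf.\ \cite{M}) shows that the top-left block of $T(u)$ commutes with the bottom-right block of $T(v)^{-1}$, identifying $\psi_n(Y(\fgl_k))$ with the inverse-corner (Schur-complement) subalgebra and making the homomorphism well defined, while the associated-graded argument via $\gr Y(\fgl_m)\simeq U(\fgl_m[t])$ and PBW gives injectivity, just as in the cited source.
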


\subsection{Centralizer construction}
Consider the map
\begin{equation*}
\Phi_k: Y(\fgl_n) \to U(\fgl_{n+k}) \ \text{given by} \ \Phi_k = \ev \circ \omega_{n+k} \circ i_k.
\end{equation*}
From ~\cite[Proposition 8.4.2]{M} it follows that ${\rm Im} \, \Phi_k \subset U(\fgl_{n+k})^{\fgl_k}$. Here we use an embedding $$\mathfrak{gl}_k \to \mathfrak{gl}_{n+k}, \ E_{ij} \to E_{i+n, j+n}.$$ 

Let $A_0 = \mathbb{C}[\mathcal{E}_1, \mathcal{E}_2, \ldots]$ be the polynomial algebra of infinite many variables. Define a grading on $A_0$ by setting $\deg \mathcal{E}_i = i$. For any $k$ we have a surjective homomorphism
$$z_k: A_0 \to Z(U(\fgl_{n+k}));\ \mathcal{E}_i \to \mathcal{E}_i^{(n+k)}.$$
where $\mathcal{E}_i^{(n+k)}$, $i=1,2,3,\ldots$ are the following generators of $Z(U(\fgl_{n+k}))$ of degree $i$, see \cite[Section 8.2]{M}:
$$ 1+\sum\limits_{i=1}^{n+k}\mathcal{E}_iu^{-i}=\ev( \qdet  T(u))
$$

Consider the algebra $Y(\fgl_n) \otimes A_0$. This algebra has a well-defined ascending filtration given by
$$\deg a \otimes b = \deg a + \deg b$$

For any $k \geqslant 0$ we define homomorphisms of filtered algebras
$$\eta_k: Y(\fgl_n) \otimes A_0 \to U(\fgl_{n+k})^{\fgl_k}; \ a \otimes b \to \Phi_k(a) \cdot z_k(b)$$

These homomorphisms are known to be surjective. Denote by  $(Y(\fgl_n) \otimes A_0)_N$ the $N$-th filtered component, i.e. the vector space of the elements of degree not greater than $N$. From \cite[Theorem 8.4.3]{M} we have:

\begin{theorem}
\label{qasym}
The sequence $\{\eta_k\}$ is an asymptotic isomorphism. This means that for any $N$ there exists $K$ such that for any $k>K$ the restriction of $\eta_k$ to the $N$-th filtered component $(Y(\fgl_n) \otimes A_0)_N$ is an isomorphism of vector spaces $(Y(\fgl_n) \otimes A_0)_N \simeq U(\fgl_{n+k})^{\fgl_k}_N$.
\end{theorem}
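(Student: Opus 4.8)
The plan is to prove the statement by passing to associated graded algebras with respect to the degree filtrations and reducing the claim to a stabilization statement for rings of classical invariants. Since each $\eta_k$ is already known to be surjective and filtration-preserving, the induced map $\gr\eta_k$ on associated graded algebras is surjective as well, so it suffices to show that $\gr\eta_k$ is \emph{injective} in each fixed degree $d\le N$ once $k$ is large enough; by the usual filtered-to-graded argument this upgrades to the assertion that $\eta_k$ restricts to an isomorphism on the $N$-th filtered component $(Y(\fgl_n)\otimes A_0)_N$. Because $\fgl_k$ is reductive, taking $\fgl_k$-invariants commutes with passing to the associated graded, so the target becomes $\gr U(\fgl_{n+k})^{\fgl_k}=S(\fgl_{n+k})^{GL_k}$, while the source $\gr(Y(\fgl_n)\otimes A_0)$ is, by the PBW property of the $t_{ij}^{(r)}$ together with $A_0=\bc[\mathcal{E}_1,\mathcal{E}_2,\ldots]$, a polynomial algebra on the symbols $\overline{t_{ij}^{(r)}}$ ($r\ge 1$) and $\mathcal{E}_i$ ($i\ge 1$), with a Hilbert series independent of $k$.

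Next I would compute $\gr\eta_k$ on generators. Writing $E=(E_{ab})$ for the matrix of generators of $\fgl_{n+k}$ and abusing notation to let $E$ also denote its image in $S(\fgl_{n+k})$, the defining formula $\Phi_k=\ev\circ\omega_{n+k}\circ i_k$ expands $\ev\circ\omega_{n+k}(T(u))$ as $(1-E(u+n+k)^{-1})^{-1}=\sum_{m\ge 0}E^m(u+n+k)^{-m}$, so the coefficient of $u^{-r}$ in $\Phi_k(t_{ij}(u))$ has top PBW-symbol equal to the $(i,j)$-entry of $E^r$ (with $1\le i,j\le n$), of degree $r$. Likewise, from the definition of $\mathcal{E}_m^{(n+k)}$ through $\ev(\qdet T(u))$, the symbol of $z_k(\mathcal{E}_m)$ is $\tr(E^m)$ up to lower-degree and scalar corrections. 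Decomposing $E$ into $n\times n$, $n\times k$, $k\times n$, $k\times k$ blocks $\left(\begin{smallmatrix}A&B\\C&D\end{smallmatrix}\right)$, the first fundamental theorem for $GL_k$ shows that the entries of the top-left block of each power $E^r$ together with the traces $\tr(E^m)$ generate $S(\fgl_{n+k})^{GL_k}$, consistent with the known surjectivity of $\eta_k$.

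Finally comes the heart of the argument, which is also where I expect the main obstacle: controlling the degrees in which relations among these generators appear. By the second fundamental theorem for $GL_k$, every relation among the invariants $(BD^mC)_{ij}$, $A_{ij}$ and $\tr(D^m)$ is a consequence of $(k+1)$-fold antisymmetrizations and of the Cayley--Hamilton identity for the $k\times k$ block $D$; consequently all such relations live in degree growing at least linearly with $k$. Therefore, for every fixed $N$ there is a $K$ so that for $k>K$ the invariant ring $S(\fgl_{n+k})^{GL_k}$ has no relations in degrees $\le N$ and is freely generated there, with Hilbert series in degrees $\le N$ matching that of the polynomial algebra $\gr(Y(\fgl_n)\otimes A_0)$. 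Hence $\gr\eta_k$ is an isomorphism in each degree $\le N$, and $\eta_k$ restricts to an isomorphism $(Y(\fgl_n)\otimes A_0)_N\simeq U(\fgl_{n+k})^{\fgl_k}_N$, as claimed. Making the degree bound on the second-fundamental-theorem relations precise enough to pin down an explicit $K(N)$ is the technical crux; the cleanest route is to compare the two Hilbert series directly and invoke the stabilization of $\dim S(\fgl_{n+k})^{GL_k}_d$ as $k\to\infty$.
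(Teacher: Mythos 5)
The paper itself does not prove this statement at all: Theorem \ref{qasym} is quoted directly from Molev's book \cite[Theorem 8.4.3]{M} (Olshanski's centralizer construction), and the proof given there is in substance the one you outline --- pass to associated graded algebras, identify $\gr U(\fgl_{n+k})^{\fgl_k}$ with $S(\fgl_{n+k})^{\fgl_k}$ using reductivity of $\fgl_k$, compute the symbols of the images of the generators via the block decomposition of $E$, obtain graded surjectivity from the first fundamental theorem of invariant theory, and obtain injectivity in bounded degrees for $k\gg 0$ from stability of the relations. So your route coincides with that of the cited source rather than with anything argued in the paper.

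Two points in your write-up need attention. First, your opening reduction rests on a false implication: surjectivity of a filtration-preserving map does \emph{not} imply surjectivity of the induced graded map (a preimage of an element of filtration degree $d$ may exist only in higher filtration degree, and then its symbol maps to zero; strictness of the filtered map is exactly what must be proved). In your logical structure the FFT computation of the second paragraph is presented as a ``consistency check'' against the known surjectivity of $\eta_k$, whereas it must carry the full weight of proving that $\gr\eta_k$ is surjective; the materials are all there --- the symbols $(E^r)_{ij}$, $1\le i,j\le n$, together with the symbols of $z_k(\mathcal{E}_m)$ do generate $S(\fgl_{n+k})^{GL_k}$ --- but this requires writing out the short induction expressing $A_{ij}$, $(BD^mC)_{ij}$ and $\tr(D^m)$ through top-left entries and traces of powers of $E$ (also, the symbol of $z_k(\mathcal{E}_m)$ is the sum of principal $m\times m$ minors of $E$ rather than $\tr(E^m)$, which is harmless by Newton's identities). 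Second, the injectivity of $\gr\eta_k$ in degrees $\le N$ for $k\gg 0$ is the actual content of the theorem, and you support it by appealing to the second fundamental theorem (Procesi--Razmyslov trace identities) plus the degree count ``relations begin in degree roughly $k$''. That assertion is true and is precisely the classical stability on which Olshanski's construction rests, but as written it is a citation rather than a proof; and the alternative you call the cleanest route --- comparing Hilbert series and invoking stabilization of $\dim S(\fgl_{n+k})^{GL_k}_d$ as $k\to\infty$ --- is circular unless you independently compute the stable dimension and show it equals the dimension of the free polynomial algebra on your symbols (e.g.\ via Cauchy identities/Howe duality), since that equality is equivalent to the injectivity being proved.
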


\subsection{Skew representations of $Y(\fgl_n)$}
Let $V_{\lambda}$ be the finite-dimensional irreducible representation of $U(\fgl_{n+k})$ of the highest weight $(\lambda_1, \ldots, \lambda_{n+k})$. Consider the restriction of $V_{\lambda}$ to $\fgl_k$:
$V_{\lambda} = \sum_{\mu} M_{\lambda \mu} \otimes V_{\mu}$, where $M_{\lambda \mu} : = \Hom(V_{\mu}, V_{\lambda})$ is the multiplicity space.
It is well-known that multiplicity space is an irreducible representation of  $U(\fgl_{n+k})^{\fgl_k}$.
Restriction of $\eta_k \circ \tau_z$ gives $M_{\lambda \mu}$ a structure of representation of $Y(\fgl_n)$. We denote this representation by $V_{\lambda \setminus \mu}(z)$ and call {\em skew representation} of $Y(\fgl_n)$.

According to the Jacobson density theorem, any multiplicity space $M_{\lambda \mu}$ is an irreducible $U(\fgl_{n+k})^{\fgl_k}$-module. Since $\eta_k$ is surjective we have the following 
\begin{proposition}\cite[Section 6]{M}
Any skew representation of $Y(\fgl_n)$ is irreducible.
\end{proposition}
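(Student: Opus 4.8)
The plan is to exploit that the $Y(\fgl_n)$-action on the multiplicity space $M_{\lambda\mu}$ factors through the centralizer algebra $U(\fgl_{n+k})^{\fgl_k}$, for which irreducibility of $M_{\lambda\mu}$ is already available, and then to check that restricting from the full homomorphism $\eta_k$ to $Y(\fgl_n)\otimes 1$ does not enlarge the lattice of invariant subspaces.

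First I would record precisely how $Y(\fgl_n)$ acts on $V_{\lambda\setminus\mu}(z)=M_{\lambda\mu}$: by definition it acts by $\eta_k\circ\tau_z$ restricted to $Y(\fgl_n)\otimes 1$, that is, by $y\mapsto\Phi_k(\tau_z(y))$. Since $\tau_z$ is an automorphism of $Y(\fgl_n)$, the image of this map equals $\Phi_k(Y(\fgl_n))$, and hence a subspace of $M_{\lambda\mu}$ is a $Y(\fgl_n)$-submodule if and only if it is stable under $\Phi_k(Y(\fgl_n))$.

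The key point is that the operators coming from the second tensor factor $A_0$ act by scalars and so impose no extra constraint. Indeed $z_k(A_0)=Z(U(\fgl_{n+k}))$, and since $M_{\lambda\mu}$ is the multiplicity space of $V_\mu$ inside the irreducible $U(\fgl_{n+k})$-module $V_\lambda$, Schur's lemma forces every central element to act on $M_{\lambda\mu}$ by the scalar prescribed by the central character of $\lambda$. Consequently any $\Phi_k(Y(\fgl_n))$-stable subspace of $M_{\lambda\mu}$ is automatically $z_k(A_0)$-stable, hence stable under the whole image $\eta_k(Y(\fgl_n)\otimes A_0)$, which by the surjectivity of $\eta_k$ is all of $U(\fgl_{n+k})^{\fgl_k}$.

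Finally I would invoke that $M_{\lambda\mu}$ is irreducible as a $U(\fgl_{n+k})^{\fgl_k}$-module, which follows from the Jacobson density theorem as recalled above. Combining this with the previous paragraph, every $Y(\fgl_n)$-submodule of $M_{\lambda\mu}$ is either $0$ or all of $M_{\lambda\mu}$, giving the claim. The only non-formal step is the reduction from $\eta_k$ to $\Phi_k$, which is precisely handled by the central-character argument; the rest is bookkeeping about images of homomorphisms.
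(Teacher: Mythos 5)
Your proof is correct and follows essentially the same route as the paper: irreducibility of $M_{\lambda\mu}$ as a $U(\fgl_{n+k})^{\fgl_k}$-module via the Jacobson density theorem, combined with surjectivity of $\eta_k$. Your central-character argument (that $z_k(A_0)\subset Z(U(\fgl_{n+k}))$ acts by scalars on $M_{\lambda\mu}$, so stability under $\Phi_k(Y(\fgl_n))$ already implies stability under all of $U(\fgl_{n+k})^{\fgl_k}$) is exactly the step the paper leaves implicit when passing from surjectivity of $\eta_k$ on $Y(\fgl_n)\otimes A_0$ to irreducibility under the $Y(\fgl_n)$-action alone.
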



\section{$\overline{M}_{0,n+2}$ and its real forms.}
\label{compact}

\subsection{Deligne-Mumford compactification.}
Let $\overline{M_{0,n+2}}$ denote the Deligne-Mumford space of stable rational curves with $n+2$ marked points. The points of $\overline{M_{0,n+2}}$ are isomorphism classes of curves of genus $0$, with $n+2$ ordered marked points and possibly with nodes, such that each component has at least $3$ distinguished points (either marked points or nodes). One can represent the combinatorial type of such a curve as a tree with $n+2$ leaves with inner vertices representing irreducible components of the corresponding curve, inner edges corresponding to the nodes and the leaves corresponding to the marked points. Informally, the topology of $\overline{M_{0,n+2}}$ is determined by the following rule: when some $k$ of the distinguished points (marked or nodes) of the same component collide, they form a new component with $k+1$ distinguished points (the new one is the intersection point with the old component). In particular, the tree describing the combinatorial type of the less degenerate curve is obtained from the tree corresponding to the more degenerate one by contracting an edge.

The space $\overline{M_{0,n+2}}$ is a smooth algebraic variety. It can be regarded as a compactification of the configuration space $M_{0,n+2}$ of ordered $(n+2)$-tuples $(z_0,z_1,\ldots,z_{n+1})$ of pairwise distinct points on $\bc \bp^1$ modulo the automorphism group $PGL_2(\bc)$. Since the group $PGL_2(\bc)$ acts transitively on triples of distinct points, we can fix the $0$-th and the $(n+1)$-th points to be $0\in\bc\bp^1$ and $\infty\in\bc\bp^1$, respectively. Then the space $M_{0,n+2}$ gets identified with the quotient $\{(z_1,\ldots,z_n)\in \bc^{*n}\ |\ z_i\ne z_j\}/ \bc^*$, and the group $\bc^*$ acts by dilations. Under this identification of $M_{0,n+2}$, the space $\overline{M_{0,n+2}}$ is the multiplicative version of the De Concini-Procesi wonderful closure of the complement to a hyperplane arrangement introduced by De Concini and Giaffi in \cite{dcg}, i.e. the wonderful closure of the complement to the arrangement of root subtori in the maximal torus of the adjoint group on the toric variety $X$ attached to the fan formed by the root arrangement in the weight lattice of $T=\bc^{*n}/\bc^*$. For $G=PGL_n $ we have $X=\overline{L_{n+2}}$ is the \emph{Losev-Manin moduli space}, \cite{LM}. The following statement is well-known, see \cite{K}:

\begin{proposition} $\overline{M_{0,n+2}}$ is the iterated blow-up of the subspaces of the form $\{z_{i_1}=z_{i_2}=\ldots=z_{i_k}\}$ in $\overline{L_{n+2}}$.
\end{proposition}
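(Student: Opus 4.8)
The plan is to realise the statement as an instance of the multiplicative (toric) wonderful compactification construction of De~Concini--Gaiffi \cite{dcg}, and then to invoke the identification of the resulting model with $\overline{M_{0,n+2}}$ going back to Kapranov \cite{K}. Recall from the discussion above that, after fixing $z_0=0$ and $z_{n+1}=\infty$, the open stratum $M_{0,n+2}$ is the complement in $T=\bc^{*n}/\bc^*$ of the arrangement of root subtori $H_{ij}=\{z_i=z_j\}$, $1\le i<j\le n$, and that $X=\overline{L_{n+2}}$ is the smooth projective toric variety compactifying $T$ along the fan of the type $A_{n-1}$ root arrangement. In particular the collisions of the $z_i$ with $0$ and $\infty$ are already resolved by the toric boundary of $X$, so that the only locus left to resolve inside $X$ is the root arrangement itself.

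First I would pin down the building set of this arrangement. The intersection poset of the $H_{ij}$ is the lattice of set partitions of $\{1,\dots,n\}$, a partition $\pi$ corresponding to the subtorus on which $z_i=z_j$ whenever $i,j$ lie in a common block of $\pi$. Such a subtorus factors as the product of its single-block constituents, so the indecomposable (``irreducible'') flats---which form the minimal building set of the type $A_{n-1}$ arrangement---are exactly those attached to a partition with a single non-trivial block, namely the diagonals $D_S=\{z_i=z_j\ \text{for all}\ i,j\in S\}$ indexed by subsets $S\subseteq\{1,\dots,n\}$ with $|S|\ge 2$. These are precisely the subspaces $\{z_{i_1}=\ldots=z_{i_k}\}$ appearing in the statement. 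I would also check that, the sublattice of characters cutting out $D_S$ being saturated and compatible with the root fan, the closure $\overline{D_S}$ is a smooth toric subvariety of $X$ meeting the toric boundary transversally.

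Next I would run the wonderful construction. Ordering the building set by non-decreasing dimension of the $D_S$ (equivalently by non-increasing $|S|$, so that the deepest diagonal $D_{\{1,\dots,n\}}$ is blown up first) and successively blowing up the proper transforms of the $\overline{D_S}$, the building-set axioms ensure that at each stage the centre is smooth and that the accumulated exceptional and arrangement divisors form a simple normal crossing divisor; they also ensure independence of the order chosen within a fixed dimension. The outcome is a smooth projective variety $W\to X$ compactifying $M_{0,n+2}$, whose boundary is a normal crossing divisor with strata indexed by the nested sets of $\{D_S\}$. By construction $W$ is the De~Concini--Gaiffi wonderful model of $(X,\{H_{ij}\})$, which the discussion preceding the statement identifies with $\overline{M_{0,n+2}}$.

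The hard part will be this last identification $W\cong\overline{M_{0,n+2}}$. The route I would take is to match stratifications: nested sets of single-block diagonals $D_S$ are in canonical bijection with the dual trees of stable $(n+2)$-pointed genus-$0$ curves, compatibly with incidence (inclusion of nested sets on one side, edge-contraction of trees on the other), and the blow-down $W\to X$ should be compatible with the natural contraction $\overline{M_{0,n+2}}\to\overline{L_{n+2}}$. Upgrading this bijection of stratified sets to an isomorphism of varieties is exactly Kapranov's theorem \cite{K}; alternatively one could exhibit the universal stable curve directly over the iterated blow-up and invoke the universal property of $\overline{M_{0,n+2}}$. Once either is in place, the proposition follows.
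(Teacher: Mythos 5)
Your overall strategy---build the De Concini--Gaiffi wonderful model $W$ of the root-subtorus arrangement over $\overline{L_{n+2}}$, with the single-block diagonals $D_S$ as the minimal building set, and then identify $W$ with $\overline{M_{0,n+2}}$---is a reasonable organization, and your combinatorial preliminaries are right: the irreducible flats of the partition lattice are exactly the diagonals $\{z_{i_1}=\ldots=z_{i_k}\}$, the blow-ups are performed on proper transforms in order of increasing dimension, and nested sets match dual trees of stable curves. Note, however, that with this setup the proposition becomes \emph{equivalent} to the isomorphism $W\cong\overline{M_{0,n+2}}$ (the wonderful model is an iterated blow-up of the diagonals by construction), so everything hinges on the step you yourself call ``the hard part.'' This is where the proposal breaks down. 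Kapranov's theorem is a specific presentation of $\overline{M_{0,n+2}}$ as an iterated blow-up of $\mathbb{P}(\bc^n)$ along the subspaces $\{0=z_{i_1}=\ldots=z_{i_k}\}$ \emph{and} $\{z_{i_1}=\ldots=z_{i_k}\}$; it is not a device for upgrading a bijection between stratification posets to an isomorphism of varieties, so the sentence ``Upgrading this bijection of stratified sets to an isomorphism of varieties is exactly Kapranov's theorem'' does not constitute a proof. To actually use Kapranov you must relate your base $\overline{L_{n+2}}$ to his base $\mathbb{P}(\bc^n)$, and that link is missing from your argument.

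That missing link is precisely what the paper's proof supplies, and it is the real content here: $\mathbb{P}(\bc^n)$ is the toric variety of the fan with cones $C_i=\{x_i\ge x_j\ \forall j\ne i\}$, the root fan is a subdivision of it, and the added faces correspond exactly to the subvarieties $\{0=z_{i_1}=\ldots=z_{i_k}\}$; hence $\overline{L_{n+2}}\to\mathbb{P}(\bc^n)$ is the iterated blow-up along the subspaces through $0$, so Kapranov's tower factors through $\overline{L_{n+2}}$ and the remaining centers are exactly the diagonals. Your proposal never performs this toric comparison (nor the implicit reordering of centers that the factorization requires), so as written it assumes what is to be proved. The alternative route you mention---exhibiting a universal stable $(n+2)$-pointed curve over $W$ and invoking the fine moduli property of $\overline{M_{0,n+2}}$---would indeed give a proof independent of Kapranov, but it is only named, not carried out; constructing that family over the iterated blow-up and verifying stability is nontrivial work. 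Either complete that construction, or insert the toric factorization step above, and your argument closes.
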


\begin{proof}
According to Kapranov \cite{
K}, $\overline{M_{0,n+2}}$ is an iterated blow up of the projective space $\mathbb{P}(\bc^n)$ at the subspaces of the form $\{0=z_{i_1}=z_{i_2}=\ldots=z_{i_k}\}$ and $\{z_{i_1}=z_{i_2}=\ldots=z_{i_k}\}$. The projective space $\mathbb{P}(\bc^n)$ is a toric $T$-variety corresponding to the fan formed by the cones $C_i=\{x_i\ge x_j\ |\ \forall j\ne i\}$ for all $i=1,\ldots,n$. The fan formed by the root arrangement is a subdivision of the fan formed by the cones $C_i$, and the additional faces correspond to the subvarieties of the form $\{0=z_{i_1}=z_{i_2}=\ldots=z_{i_k}\}$ on $\mathbb{P}(\bc^n)$. So the variety $\overline{L_{n+2}}$ is the blow-up of $\mathbb{P}(\bc^n)$ at the subspaces of the form $\{0=z_{i_1}=z_{i_2}=\ldots=z_{i_k}\}$. Hence $\overline{M_{0,n+2}}$ is the blow-up of the subvarieties of the form $\{z_{i_1}=z_{i_2}=\ldots=z_{i_k}\}$ on $\overline{L_{n+2}}$.
\end{proof}

\begin{remark}
It is well-known that $\overline{L_{n+2}}$ is also the closure of the maximal torus $T$ in the De Concini-Procesi wonderful closure $\overline{G}$ of the adjoint group $G=PGL_n$. See also \ref{limits}. 
\end{remark}

\subsection{Stratification of $\overline{M_{0,n+2}}$}
The Deligne-Mumford variety $\overline{M_{0,n+2}}$ is stratified by the combinatorial type of the curve with marked points. Namely, the strata are indexed by trees whose leaves correspond to the marked points $0,z_1,\ldots,z_n,\infty$ where the inner vertices represent the connected components and the edges correspond to the nodal points. Clearly, the dimension of the stratum is the sum of the degree minus $3$ over all inner vertices. The natural partial order on the strata is given by contracting edges of the trees. Clearly, the closure of each stratum is the direct product $\prod\limits_{i\in I} \overline{M_{0,k_i}}$ where $I$ is the set of inner vertices of the tree and the $i$-th inner vertex is $k_i$-valent. In particular, starting from a point in the open stratum, one can approach any boundary point of $\overline{M_{0,n+2}}$ just by successively approaching a generic point in a codimension $1$ strata for subsequent $\overline{M_{0,k_i}}$'s. 

There are strata of the codimension $1$ parameterize $2$-component curves (eqivalently, trees with exactly $2$ inner vertices). We distinguish the case when the points $0$ and $\infty$ belong to different components and call them \emph{strata of the first type}. The corresponding curves have the form (these strata are already visible on the Losev-Manin space):
\begin{center}
\includegraphics[scale=0.5]{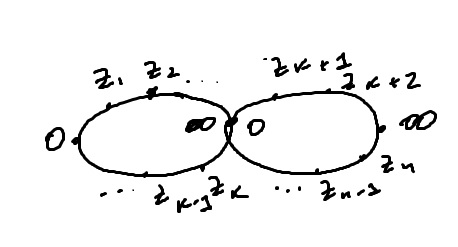}
\end{center}

All other codimension $1$ strata are called \emph{strata of the second type} and have the form
\begin{center}
\includegraphics[scale=0.5]{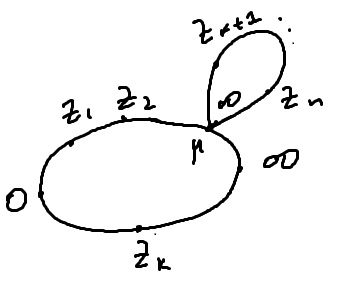}
\end{center}

Note that one can approach any boundary point by first passing successively to strata of the first type (this corresponds to approaching boundary points in $\overline{L_{n+2}}$) and next to strata of the second type in appropriate $\overline{M_{0,k_i}}$ (this corresponds to approaching a boundary point in a fiber of $\overline{M_{0,n+2}}$ over $\overline{L_{n+2}}$).



\subsection{Real forms} 
\label{inv}
In \cite{ceyhan}, \"Ozg\"ur Ceyhan studies real forms of the Deligne-Mumford space which preserve the stratification. To any involution (including identity) $\sigma\in S_{n+2}$ one can assign an involutive automorphism of $\overline{M_{0,n+2}}$ which 
permutes the marked points. Composing it with the complex conjugation $z_i\mapsto \bar{z_i}$ we get a complex antilinear automorphism $s:\overline{M_{0,n+2}}\to\overline{M_{0,n+2}}$. According to \cite{ceyhan}, these are all complex antilinear automorphisms preserving the stratification of $\overline{M_{0,n+2}}$. More precisely, to an involution $\sigma$ being the product of $k$ independent transpositions corresponds the real form $\overline{M_{2k,l}}$ (with $l+2k=n+2$) such that generic points of $M_{2k,l}$ have the form $(z_1,\ldots,z_k,\bar{z}_1,\ldots,\bar{z}_k,u_1,\ldots,u_l)$ where $u_i\in\br$.

Let $\overline{M_{0,n+2}}^s$ be the real locus of the Deligne-Mumford space with respect to the real structure $s$.  Note that if $s_1$ and $s_2$ are two conjugated antiholomorphic involutions then $\overline{M_{0,n+2}}^{s_1}$ and $\overline{M_{0,n+2}}^{s_2}$ are diffeomorphic. 
The real forms of the algebraic group $PGL_n$ have very similar description. Let $G_r$ be a real form of $PGL_n$ and $T_r \subset G_r$ be a maximal torus which contains a maximal split torus. We can assume that $T_r \subset T$. Recall that we have a map from $T$ to $M_{0,n+2}$ so can regard the torus $T_r$ as a subset of $M_{0,n+2}$. One can find an antiholomorphic involution $s$ of $\overline{M_{0,n+2}}$ such that $T_r = (M_{0,n+2})^{s}$ hence an open subset of $\overline{M_{0,n+2}}^{s}$. This means that the closure of $T_r$ in $\overline{M_{0,n+2}}$ coincides with $\overline{M_{0,n+2}}^{s}$ which defines a map from real forms of $PGL_n$ to real forms of $\overline{M_{0,n+2}}$. We have the following

\begin{proposition}
The above map from the equivalence classes of real forms of the complex algebraic group $PGL_n$ to the real forms of $\overline{M_{0,n+2}}$ is surjective. This map is one-to-one for odd $n$.
\end{proposition}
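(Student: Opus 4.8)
The plan is to make the correspondence explicit: for each real form $G_r$ of $PGL_n$ I will write down its maximally split torus $T_r \subset T$, determine the antiholomorphic involution $s$ of $\overline{M_{0,n+2}}$ cutting out $\overline{T_r}$, and read off the conjugacy class of the corresponding involution $\sigma \in S_{n+2}$ — equivalently the number $k$ of transpositions in $\sigma$, which by Ceyhan's classification is the complete invariant of the real form $\overline{M_{2k,l}}$ (with $l = n+2-2k$ and $0 \le k \le \lfloor (n+2)/2 \rfloor$). Recall that under the identification $M_{0,n+2} = \{(z_1,\ldots,z_n)\}/\bc^*$ the $n$ coordinates $z_i$ are the eigenvalues of a torus element while $0,\infty$ are the two extra marked points, so $\sigma$ records how $s$ permutes the multiset $\{0, z_1, \ldots, z_n, \infty\}$. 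The proposition will then follow by comparing, for each $n$, the list of values of $k$ produced by the real forms of $PGL_n$ against the full range $\{0, 1, \ldots, \lfloor (n+2)/2\rfloor\}$.

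First I would recall the classification of real forms of the adjoint group of type $A_{n-1}$: the split form $PGL_n(\br)$; the unitary forms $PU(p,q)$ with $p+q = n$, $p \ge q \ge 0$ (including the compact $PU(n)=PU(n,0)$); and, for even $n = 2m$, the quaternionic form $PSU^*(2m)$ — pairwise non-isomorphic for $n \ge 3$, with split ranks $n-1$, $q$ and $m-1$. For the split form $\theta(g)=\bar g$ acts on the diagonal torus by $\diag(z_i)\mapsto\diag(\bar z_i)$, so the $z_i$ are real, $s$ is induced by $z\mapsto\bar z$ fixing $0,\infty$, and $\sigma=\id$, $k=0$. For $PU(p,q)$ one must use the maximally split (not the compact diagonal) Cartan: after passing to a hyperbolic basis the torus has eigenvalues $a_1,\bar a_1^{-1},\ldots,a_q,\bar a_q^{-1},u_1,\ldots,u_{p-q}$ with $a_i\in\bc^*$ and $|u_j|=1$, and the defining involution acts by $z\mapsto 1/\bar z$, fixing the unit circle and exchanging $0\leftrightarrow\infty$; thus $s$ swaps the $q$ pairs $\{a_i,\bar a_i^{-1}\}$ and swaps $0$ with $\infty$ while fixing the $u_j$, giving $k=q+1$ (with $l=p-q$). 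For the quaternionic form the maximally split Cartan has eigenvalues $\zeta_1,\bar\zeta_1,\ldots,\zeta_m,\bar\zeta_m$ and the involution acts by $z\mapsto\bar z$, fixing $0,\infty$ and exchanging the $m$ conjugate pairs $\{\zeta_i,\bar\zeta_i\}$, so $k=m=n/2$.

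Comparing the lists, the split form gives $k=0$ and the unitary forms $PU(p,q)$ with $q=0,1,\ldots,\lfloor n/2\rfloor$ give $k=1,2,\ldots,\lfloor n/2\rfloor+1$, which together exhaust $\{0,1,\ldots,\lfloor n/2\rfloor+1\}$. For odd $n$ one has $\lfloor n/2\rfloor+1=(n+1)/2=\lfloor(n+2)/2\rfloor$ and there is no quaternionic form, so every value $k=0,1,\ldots,\lfloor(n+2)/2\rfloor$ is attained exactly once and the map is a bijection. For even $n=2m$ the split and unitary forms already attain every value in $\{0,\ldots,m+1\}=\{0,\ldots,\lfloor(n+2)/2\rfloor\}$, so the map is surjective, while the quaternionic form adds a repeat of $k=m$ already realized by $PU(m+1,m-1)$, so injectivity fails precisely at $k=m$.

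The step I expect to be the main obstacle is the determination of the antiholomorphic involution on $\bc\bp^1$ for each real form — specifically, whether it is conjugate to $z\mapsto\bar z$ (fixing $0,\infty$, as for the split and quaternionic forms) or to $z\mapsto 1/\bar z$ (exchanging $0,\infty$, as for the unitary forms), since the latter supplies the extra transposition shifting $k$ by one. This is exactly where one must avoid the compact diagonal Cartan of $U(p,q)$ and instead use the maximally split one, so that the eigenvalue pairing $a\leftrightarrow\bar a^{-1}$ rather than the naive $z\leftrightarrow\bar z$ is recorded; I would check the bookkeeping through the identity $l+2k=n+2$ in each case. The remaining input — existence and uniqueness up to conjugacy of an $s$ with $T_r=(M_{0,n+2})^s$ — is furnished by the discussion preceding the statement together with Ceyhan's theorem.
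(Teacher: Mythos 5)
Your proposal is correct and follows essentially the same route as the paper: the paper's proof is a single sentence asserting the correspondence $\overline{M_{2k,l}} \leftrightarrow U_{k+l-1,k-1}$ for $k>0$ and $k=0 \leftrightarrow GL_n(\br)$, which is exactly the identification $k=q+1$, $l=p-q$ that you verify by computing the induced involution ($z\mapsto 1/\bar z$ versus $z\mapsto\bar z$) on the maximally split Cartans. Your explicit treatment of the quaternionic form $PSU^*(2m)$ — showing it lands on $k=m$, already realized by $PU(m+1,m-1)$ — supplies the reason injectivity fails for even $n$, a point the paper's proof leaves entirely implicit.
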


\begin{proof}
Indeed, for $k>0$, $\overline{M_{2k,l}}$ corresponds to $U_{k+l-1,k-1}$ and the ``split'' real form $k=0$ corresponds to $GL_n(\br)$. 
\end{proof}


We will be interested in two simplest cases, namely
\begin{enumerate}
    \item $\sigma_0 = e\in S_{n+2}$. Then the set of $\br$-points of the corresponding real form is the closure of the set regular points of the \emph{split} form of the maximal torus $T_{split}\subset PGL_n(\br)$, i.e. the real locus is $\overline{M_{0,n+2}^{split}}=\overline{\{(z_1,\ldots,z_n)\ |\ z_i\ne z_j, z_i\in\br\}/\br^\times}$. 
    This is the usual real form of $\overline{M_{0,n+2}}$ which plays the crucial role in \emph{coboundary} monoidal categories analogous to the role played by the space of configurations of points on the complex line in braided monoidal categories, see \cite{hk,hkrw} for details.
    \item $\sigma_1 = (1,n+2)\in S_{n+2}$. The corresponding real locus of $\overline{M_{0,n+2}}$ is the closure of the set of regular points of the \emph{compact} real form of the maximal torus $T_{comp}\subset PU_u$, i.e. the real locus is $\overline{M_{0,n+2}^{comp}}=\overline{\{(z_1,\ldots,z_n)\ |\ z_i\ne z_j, |z_i|=1\}/U_1}$.  Namely, the points of $\overline{M_{0,n+2}^{comp}}$ are curves where all marked points are on the unit circle, and the points $0, \infty$ belongs to the same component. In particular, this means that the projection of this real locus to the Losev-Manin space is contained in the open $T$-orbit, i.e. there are no boundary strata involving the degeneration of the first type.
\end{enumerate}

For the future reference, we write down here the action of $\sigma_1$ on codimension~$1$ strata of the first and of the second type of $\overline{M_{0,n+2}}$:
\begin{center}
\includegraphics[scale=0.5]{lim1.jpg} $\mapsto$ \includegraphics[scale=0.63]{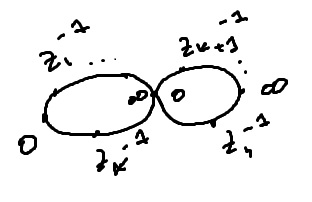}
\end{center}
\begin{center}
\includegraphics[scale=0.5]{lim2.jpg} $\mapsto$ \includegraphics[scale=0.63]{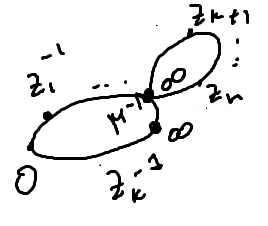}
\end{center}

\section{Bethe and shift of argument subalgebras}
\subsection{Bethe subalgebras}

The symmetric group $S_n$ acts on $Y(\fgl_n)[[u^{-1}]] \otimes (\rm{End} \ \mathbb{C}^n)^{\otimes n}$ by permuting the tensor factors. This action factors through the embedding $S_n\hookrightarrow (\rm{End} \ \mathbb{C}^n)^{\otimes n}$ hence the group algebra $\bc[S_n]$ is a subalgebra of $Y(\fgl_n)[[u^{-1}]] \otimes (\End \ \mathbb{C}^n)^{\otimes n}$. Let $S_m$ be the subgroup of $S_n$ permuting the first $m$ tensor factors. Denote by $A_m$ the antisymmetrizer $$\sum_{\sigma \in S_m} (-1)^{\sigma}\sigma \in \mathbb{C}[S_m]\subset Y(\fgl_n)[[u^{-1}]] \otimes (\rm{End} \ \mathbb{C}^n)^{\otimes n}.$$
Suppose that $C \in GL_n$. For any $a \in \{1, \ldots ,n\}$ denote by $C_a$ the element $i_a(1\otimes C)\in Y(\fgl_n)[[u^{-1}]] \otimes (\operatorname{End} \mathbb{C}^n)^{\otimes n}$. For any $1 \leqslant p \leqslant n$ introduce the series with coefficients in $Y(\fgl_n)$ by
$$\tau_p(u,C) =  \tr A_p  C_1 \ldots C_p T_1(u) \ldots T_p(u-p+1),$$
where we take the trace over all $p$ copies of $ \rm{End} \ \mathbb{C}^n$.

\begin{definition}
\emph{Bethe subalgebra} $B(C)\subset Y(\fgl_n)$ is generated by all coefficients of the series $\tau_p(u,C)$ for $p=1,\ldots,n$.
\end{definition}

It follows from the definition that $B(C) = B(a \cdot C)$ for any $a \in \bc$. We fix a maximal torus $T\subset GL_n$ (i.e. the subgroup of diagonal matrices) and denote by $T^{reg}$ the set of regular elements of $T$. Denote by $GL_n^{reg}$ the set of regular elements of the Lie group $GL_n$. The following Proposition summarize known algebraic properties of Bethe subalgebras, see e.g. \cite{nazol}, \cite{ir3}, \cite{ilin}.

\begin{theorem}[Properties of Bethe subalgebras]
\label{betheprop}
\begin{enumerate}
    \item For any $C \in GL_n$, the subalgebra $B(C)$ is commutative.
    \item For $C \in T^{reg}$, the subalgebra $B(C)$ is a maximal commutative subalgebra. 
    \item For any $C \in GL_n^{reg}$ the subalgebra $B(C)$ is freely generated by the coefficients of $\tau_p(u,C)$ with $p=1,\ldots,n$.
    \item Let $C \in T^{reg}$ and let $\tilde C$ be an arbitrary representative of $C$ in the universal cover of $GL_n$. Then the subalgebra $B(C)$ generated by all $$\tr_V \rho(\tilde C) (\rho \otimes 1)(\hat R(u)),$$ where $(\rho,V)$ ranges over all finite-dimensional representations of $Y(\fgl_n)$ and $\hat R(u)$ is the universal $R$-matrix for $Y(\fgl_n)$.
\end{enumerate}

\end{theorem}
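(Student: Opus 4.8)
The plan is to treat the four assertions separately, moving between $Y(\fgl_n)$ and its associated graded algebra whenever algebraic independence or maximality is at stake.

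For (1) I would run the twisted transfer-matrix argument. Writing $\tau_1(u,C) = \tr_0 C_0 T_0(u)$ with a single auxiliary space, the relation $R(u-v)T_1(u)T_2(v)=T_2(v)T_1(u)R(u-v)$ together with cyclicity of the partial trace gives $[\tau_1(u,C),\tau_1(v,C)]=0$; the only input is that $R(u)=1-Pu^{-1}$ commutes with $C\otimes C$ for \emph{every} $C$, since $P$ does. For the higher generators I would use that the antisymmetrizer $A_p$ is, up to a scalar, the fusion of elementary $R$-matrices at the special points $u,u-1,\ldots,u-p+1$, so $\tau_p(u,C)$ is a fused transfer matrix in the auxiliary space $\bigwedge^p\bc^n$. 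Each elementary $R$-factor again commutes with $C^{\otimes}$, and the same $RTT$/cyclicity computation yields $[\tau_p(u,C),\tau_q(v,C)]=0$ for all $p,q$, proving commutativity for arbitrary $C\in GL_n$.

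For (3) and (2) I would pass to the associated graded algebra with respect to the filtration $\deg t_{ij}^{(r)}=r$, where $\gr Y(\fgl_n)$ is commutative (in fact Poisson) and the $t_{ij}^{(r)}$ are PBW generators. The symbols of the coefficients of $\tau_p(u,C)$ are explicit functions on the relevant formal loop group, and for $C\in GL_n^{reg}$ they coincide with the generators of a Mishchenko--Fomenko / shift-of-argument type Poisson-commutative subalgebra. Freeness (3) follows from the algebraic independence of these symbols, which I would verify by exhibiting their leading terms as a shift-of-argument family and invoking the standard fact that elements whose symbols are algebraically independent are themselves algebraically independent. For maximality (2) with $C\in T^{reg}$ I would compare transcendence degrees: the Poisson-commutative subalgebra generated by the symbols attains the maximal possible transcendence degree, so its Poisson centralizer cannot be strictly larger, and lifting this back shows $B(C)$ is maximal commutative in $Y(\fgl_n)$.

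For (4) I would identify the universal transfer matrices with objects already in hand. The image of $\hat R(u)$ in the vector representation is, up to normalization, the Yang $R$-matrix, so $\tr_V \rho(\tilde C)(\rho\otimes 1)(\hat R(u))$ for $V=\bc^n$ reproduces $\tau_1(u,C)$, and taking $V=\bigwedge^p\bc^n$ reproduces $\tau_p(u,C)$ via the same fusion as in (1); hence the algebra generated by all such traces contains $B(C)$. For the reverse inclusion I would use that in the Grothendieck ring every finite-dimensional $Y(\fgl_n)$-module is a combination of fusion products of the exterior-power modules $\bigwedge^p\bc^n$ with spectral shifts, together with central twists that merely rescale $T(u)$ by a series in $u^{-1}$ (the $Y(\fsl_n)\otimes Z$ decomposition of Proposition~\ref{tensor2}); since the transfer matrix is additive over exact sequences and multiplicative over tensor products, every such trace is then a polynomial in the $\tau_p(u,C)$.

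The main obstacle is maximality (2). Commutativity and the $R$-matrix description are formal consequences of $RTT$ and fusion, and freeness is a clean algebraic-independence statement, but maximality genuinely requires the \emph{completeness} of the associated classical system, i.e. that the Poisson-commutative subalgebra produced in the graded limit reaches the maximal transcendence degree for regular $C$. This is exactly where the hypothesis $C\in T^{reg}$ is essential (the statement fails for non-regular $C$) and where one must invoke the nontrivial dimension count of the shift-of-argument construction rather than a formal manipulation.
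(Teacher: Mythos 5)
The statement you are proving is not actually proved in the paper at all: it appears there as a survey of known results, with the proofs delegated to Nazarov--Olshanski \cite{nazol}, Ilin--Rybnikov \cite{ir3}, and Ilin \cite{ilin}. So the benchmark is those sources. Measured against them, your treatment of (1), (3) and (4) is essentially the standard one: commutativity for arbitrary $C\in GL_n$ via the $RTT$ relation, cyclicity of the trace, the fact that $P$ (hence $R(u)$) commutes with $C\otimes C$, and fusion in the auxiliary space $\bigwedge^p\bc^n$; freeness via algebraic independence of the symbols in the associated graded algebra; and assertion (4) via the observation that the universal transfer matrix is additive on exact sequences and multiplicative on tensor products, so it factors through the Grothendieck ring, which is generated by the classes of the fundamental modules $\bigwedge^p\bc^n(z)$ whose transfer matrices are shifts of the $\tau_p(u,C)$. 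These outlines are sound and match the cited literature.

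The genuine gap is in (2). Your argument is: the symbols generate a Poisson-commutative subalgebra of maximal transcendence degree, hence its Poisson centralizer cannot be strictly larger, hence $B(C)$ is maximal commutative. This fails on two counts. First, equality of transcendence degrees never by itself rules out a strictly larger commutative overalgebra, even in the finitely generated setting: $\bc[x^2]\subsetneq\bc[x]$ is a strict inclusion of commutative algebras of the same transcendence degree. What is really needed is that the subalgebra is algebraically (rationally) closed in the ambient algebra, and that closedness is precisely the nontrivial content of Tarasov's maximality theorem \cite{tarasov, taras2} for shift-of-argument subalgebras; it cannot be waved in as a ``standard fact''. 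Second, and more fatally, $Y(\fgl_n)$ and its associated graded algebras ($\mathcal{O}((GL_n)_1[[t^{-1}]])$ or $U(\fgl_n[t])$) are not finitely generated: both $\gr B(C)$ and any hypothetical larger commutative subalgebra have infinite transcendence degree, so the comparison you propose is vacuous. The actual proof of (2) in \cite{ilin} avoids this by a completely different mechanism: one shows (as in Proposition~\ref{isomor} of the present paper) that $\eta_k(B(C)\otimes A_0)=\mA_{(C,0,\ldots,0)}\subset U(\fgl_{n+k})^{\fgl_k}$, invokes the maximality of these finite-dimensional shift-of-argument subalgebras (Tarasov's theorem and its extension to centralizer algebras), and then uses the asymptotic isomorphism of Theorem~\ref{qasym} to conclude that any element of $Y(\fgl_n)$ commuting with $B(C)$ already lies in $B(C)$; alternatively one argues degree-by-degree on filtered components, as in the lemma of \cite{taras2} quoted in the proof of Theorem~\ref{clo}. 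Either way, the essential input is a closedness/centralizer statement in a finite-dimensional (or filtered, finite-dimensional-in-each-degree) setting, not a transcendence-degree count, and your proposal as written does not supply it.
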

In fact there are no doubts that the assertion (2) is true for any $C \in GL_n^{reg}$ and that (4) is true for any $C \in GL_n$ but still there is no rigorous proof in the literature. Also, one can take (4) as the definition of Bethe subalgebras in the case of any simple Lie algebra, see \cite{ilin}. 
\begin{remark}
\emph{In the literature, Bethe subalgebras are often assigned to any matrix $C \in \fgl_n$ (not necessarily invertible). One can regard these subalgebras as the ones corresponding to some points in the {\em wonderful compactification} of the group $GL_n$, see \ref{limits} for details.} 
\end{remark}

\begin{proposition}
\label{tensor}

Bethe subalgebra $B(C)$ of $Y(\fgl_n)$ is the tensor product $B'(C)\otimes ZY(\fgl_n)$ where $B'(C)$ is a commutative subalgebra in $Y(\fsl_n)$ and $ZY(\fgl_n)$ is the center of $Y(\fgl_n)$.
\end{proposition}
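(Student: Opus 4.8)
The plan is to combine the tensor decomposition $Y(\fgl_n)=Y(\fsl_n)\otimes Z(Y(\fgl_n))$ of Proposition~\ref{tensor2} with the family of automorphisms $\mu_f\colon T(u)\mapsto f(u)T(u)$, $f(u)\in 1+u^{-1}\bc[[u^{-1}]]$, whose common fixed subalgebra is by definition exactly $Y(\fsl_n)$. The key observation is that the generating series $\tau_p(u,C)$ of $B(C)$ are eigen-series for this action. I would organise the argument in three steps: first show that the center $Z(Y(\fgl_n))$ lies in $B(C)$; second, factor each generator $\tau_p(u,C)$ as a product of a central series and a $\mu_f$-invariant series that still lies in $B(C)$; third, read off the tensor decomposition of $B(C)$ from this factorisation.

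For the first step I would specialise the definition of $\tau_p(u,C)$ to $p=n$. Since $A_n$ is proportional to the projector onto the one-dimensional subspace $\bigwedge^n\bc^n\subset(\bc^n)^{\otimes n}$, on which $C^{\otimes n}=C_1\cdots C_n$ acts by $\det C$, and since $A_nT_1(u)\cdots T_n(u-n+1)=\qdet T(u)\,A_n$, one finds that $\tau_n(u,C)$ is a nonzero scalar multiple of $\det(C)\,\qdet T(u)$. As $\det C\neq 0$ and the coefficients of $\qdet T(u)$ generate $Z(Y(\fgl_n))$, this already gives $Z(Y(\fgl_n))\subset B(C)$.

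For the second step I would compute the $\mu_f$-action on the generators. Because each $f(u-i+1)$ is a scalar series, pulling it out of the trace yields $\mu_f(\tau_p(u,C))=f(u)f(u-1)\cdots f(u-p+1)\,\tau_p(u,C)$; in particular $\mu_f(\qdet T(u))=f(u)\cdots f(u-n+1)\,\qdet T(u)$. Next I would introduce the unique series $z(u)\in 1+u^{-1}Z(Y(\fgl_n))[[u^{-1}]]$ with central coefficients satisfying $\qdet T(u)=z(u)z(u-1)\cdots z(u-n+1)$; its coefficients are obtained recursively over $\bc$ as polynomials in those of $\qdet T(u)$, so they lie in $Z(Y(\fgl_n))\subset B(C)$ and in fact generate the center. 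By uniqueness of this ``$n$-th root'', $\mu_f(z(u))=f(u)z(u)$. Setting $c_p(u):=z(u)z(u-1)\cdots z(u-p+1)$, the series $c_p(u)$ is central, invertible, and transforms under $\mu_f$ with exactly the same factor as $\tau_p(u,C)$. Hence $\sigma_p(u):=c_p(u)^{-1}\tau_p(u,C)$ is $\mu_f$-invariant for every $f$, so its coefficients lie in $Y(\fsl_n)$; they also lie in $B(C)$, since $c_p(u)^{-1}$ has central coefficients contained in $B(C)$.

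Finally, let $B'(C)\subset Y(\fsl_n)$ be the subalgebra generated by the coefficients of all $\sigma_p(u)$; it is commutative because it sits inside the commutative algebra $B(C)$. From $\tau_p(u,C)=c_p(u)\sigma_p(u)$ together with $Z(Y(\fgl_n))\subset B(C)$ it follows that $B(C)$ is generated by $Z(Y(\fgl_n))$ and $B'(C)$, and since the center is central we get $B(C)=B'(C)\cdot Z(Y(\fgl_n))$. Injectivity of the multiplication map $B'(C)\otimes_{\bc}Z(Y(\fgl_n))\to B(C)$ is inherited from the isomorphism $Y(\fsl_n)\otimes Z(Y(\fgl_n))\xrightarrow{\sim}Y(\fgl_n)$, so $B(C)=B'(C)\otimes Z(Y(\fgl_n))$, as claimed. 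I expect the main technical point to be the construction of the central root series $z(u)$ and the verification of its eigenvalue under $\mu_f$; everything else is formal manipulation of generating series together with the already established tensor decomposition of the Yangian.
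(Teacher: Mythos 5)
Your proof is correct and takes essentially the same route as the paper: the paper likewise introduces the unique central series $d(u)$ (your $z(u)$) with $\qdet T(u)=d(u)d(u-1)\cdots d(u-n+1)$, divides each $\tau_p(u,C)$ by $d(u)\cdots d(u-p+1)$ to obtain generators with coefficients in $Y(\fsl_n)$, and uses that $\tau_n(u,C)$ is (up to a nonzero scalar) $\qdet T(u)$, whose coefficients generate the center. Your $\mu_f$-equivariance argument simply makes explicit the verification, left implicit in the paper, that the normalized series indeed lie in $Y(\fsl_n)$.
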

\begin{proof}
Let $d(u) \in ZY(\fgl_n)[[u^{-1}]]$ be the (unique) series such that $d(u)=1$ modulo $u^{-1}$ and $$\qdet T(u) = d(u) d(u-1) \ldots d(u-n+1).$$
Then coefficients of series $$d(u)^{-1} \tau_1(u,C), (d(u)^{-1}d(u-1)^{-1}) \tau_2(u,C) , \ldots, (d(u)^{-1} d(u-1)^{-1} \ldots d(u-n+2)^{-1}) \tau_{n-1}(u,C)$$ and $\tau_n(u,C) = \qdet T(u) $ are free generators of $B(C)$. All coefficients of the first $n-1$ series then belong to $Y(\fsl_n)$, and the coefficients of $\qdet T(u)$ generate the center, \cite[Theorem 1.7.5]{M}. The statement of the Proposition follows.
\end{proof}
We will call $B'(C)$ \emph{Bethe subalgebra of} $Y(\fsl_n)$ and also denote it (slightly abusing notation) by $B(C)$. Theorem \ref{betheprop} holds for Bethe subalgebras of $Y(\fsl_n)$ as well.
In the present paper we restrict ourselves by $C \in T^{reg}$, i.e. we fix maximal torus and consider the family of Bethe subalgebras parameterized by its regular points. 

\subsection{Quantum shift of argument subalgebras.}
There is a family of commutative subalgebras in $U(\fgl_n)$ closely related to Bethe subalgebras, called {\em quantum shift of argument subalgebras}, $\mA_\chi\subset U(\fgl_n)$ depending $\chi \in \fgl_n$.
One can describe these subalgebras as {\em liftings} of 
{\em shift of argument subalgebras} in $S(\fgl_n)$. Namely,
by PBW thorem, $\gr U(\fgl_n) = S(\fgl_n)$. This determines a natural Poisson structure on $S(\fgl_n)$ which is defined on the generators as $$\{x,y\} = [x,y]  \ \forall \ x,y \in \fgl_n$$. 

Recall the matrix $E=\sum\limits_{i,j=1}^n E_{ij}\otimes e_{ij}\in S(\fgl_n)\otimes Mat_n$. Let $E_{j_1, \ldots, j_i}^{j_1, \ldots, j_i}$ be the principal minor of $E$ corresponding to  $j_1, \ldots, j_i$ rows and columns of $E$. The Poisson center of $S(\fgl_n)$ is freely generated by $n$ homogeneous generators $P_1, \ldots, P_n$ of degrees $1, \ldots, n$. There are several natural choices of the generators $P_i$; in particular, one can take  either the traces of powers $P_i = \tr E^i, i = 1, \ldots, n$ or the coefficients of the characteristic polynomial $P_i = \sum_{1 \leq j_1 \leq \ldots \leq j_i \leq n} E_{j_1, \ldots, j_i}^{j_1, \ldots, j_i}$.

\begin{definition}
The subalgebra of $S(\fgl_n)$ generated by elements
$$\partial^{c_i}_{\chi} P_i, 1 \leq c_i \leq i - 1$$ is called shift of argument subalgebra and denoted by $\overline{\mA_{\chi}}$.
\end{definition}

\begin{proposition} \cite{mf}
For any $\chi \in \fgl_n$ subalgebra $\overline{\mA_{\chi}}$ is Poisson commutative.
\end{proposition}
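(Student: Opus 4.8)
The statement is the classical Mishchenko--Fomenko theorem, and the proof I would give is the translation-of-argument (bi-Hamiltonian) argument. First I fix the trace form on $\fgl_n$ to identify $S(\fgl_n)\cong\bc[\fgl_n]$, so that a point $x\in\fgl_n$, the directional derivative $\partial_\chi$, and the shift $x\mapsto x+\lambda\chi$ all make sense simultaneously, and the Lie--Poisson bracket reads $\{f,g\}(x)=\langle x,[d_xf,d_xg]\rangle$ with $d_xf\in\fgl_n$. The generators $P_i$ of the Poisson center are exactly the invariants in $\bc[\fgl_n]^{GL_n}$, hence Casimirs: $\{P_i,f\}\equiv 0$ for all $f$. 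The key reformulation is that the generators of $\overline{\mA_\chi}$ are the Taylor coefficients of the shifted Casimirs, namely
$$P_i(x+\lambda\chi)=\sum_{c\ge 0}\lambda^c\, H^{(i)}_c(x),\qquad H^{(i)}_c:=\tfrac{1}{c!}\,\partial_\chi^c P_i,$$
so that $\partial_\chi^{c_i}P_i$ is $c_i!\,H^{(i)}_{c_i}$, and it suffices to show that all the $H^{(i)}_c$ Poisson-commute.

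The second step is to introduce the compatible pencil. Alongside $\{\cdot,\cdot\}$ I consider the constant bracket obtained by freezing the argument at $\chi$, $\{f,g\}_\chi(x):=\langle\chi,[d_xf,d_xg]\rangle$, and form $\{\cdot,\cdot\}^\lambda:=\{\cdot,\cdot\}+\lambda\{\cdot,\cdot\}_\chi$. The point is that $\{f,g\}^\lambda(x)=\langle x+\lambda\chi,[d_xf,d_xg]\rangle$ is just the Lie--Poisson bracket evaluated at the translated base point; since translation by $\lambda\chi$ is a Poisson isomorphism, every member of the pencil satisfies the Jacobi identity, so $\{\cdot,\cdot\}^\lambda$ is a genuine Poisson bracket for all $\lambda$. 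Applying the chain rule $d_x\big(P_i(\cdot+\lambda\chi)\big)=(d P_i)_{x+\lambda\chi}$ and the Casimir property of $P_i$ at the shifted point shows that $P_i(\cdot+\lambda\chi)$ is a Casimir of $\{\cdot,\cdot\}^\lambda$.

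The final step is the Lenard--Magri recursion. Expanding $0=\{P_i(\cdot+\lambda\chi),f\}^\lambda$ in powers of $\lambda$ gives, for every $c$ and every $f$,
$$\{H^{(i)}_c,f\}=-\{H^{(i)}_{c-1},f\}_\chi,$$
with $H^{(i)}_{-1}=0$. Feeding $f=H^{(j)}_b$ into this relation and using the analogous identity for the $j$-th family together with antisymmetry yields the sliding relation $\{H^{(i)}_a,H^{(j)}_b\}=\{H^{(i)}_{a-1},H^{(j)}_{b+1}\}$; iterating lowers the first index until it is negative, so $\{H^{(i)}_a,H^{(j)}_b\}=0$. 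This gives Poisson commutativity of all generators and hence of $\overline{\mA_\chi}$. The proof is essentially formal once the pencil is in place; the only point demanding care is the bookkeeping in the index-sliding recursion (tracking signs and the vanishing of out-of-range coefficients), and the verification that the shifted invariants really are Casimirs of the correct member of the pencil rather than of the unshifted bracket. I expect no genuine obstacle here, the statement being standard; the role of this proposition in the paper is simply to provide the commutative classical limit that the subalgebras $\mA_\chi\subset U(\fgl_n)$ quantize.
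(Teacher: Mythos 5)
Your proof is correct. Note that the paper itself gives no argument for this proposition at all --- it is quoted directly from Mishchenko--Fomenko \cite{mf} --- so there is no internal proof to compare against; what you have written is a complete and accurate proof of the cited classical fact. Your route is the bi-Hamiltonian (Lenard--Magri) formulation: realize the pencil $\{\cdot,\cdot\}+\lambda\{\cdot,\cdot\}_\chi$ as the pullback of the Lie--Poisson bracket under the translation $x\mapsto x+\lambda\chi$ (this is the clean way to see Jacobi for every member of the pencil, and your index-sliding recursion $\{H^{(i)}_a,H^{(j)}_b\}=\{H^{(i)}_{a-1},H^{(j)}_{b+1}\}$ is carried out correctly). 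The original Mishchenko--Fomenko argument is a slightly different, more elementary computation: for ad-invariant $f,g$ one shows directly, using the invariance identities $[x+\lambda\chi,\,d f_{x+\lambda\chi}]=0$ and $[x+\mu\chi,\,d g_{x+\mu\chi}]=0$ together with invariance of the trace form, that $\{f(\cdot+\lambda\chi),g(\cdot+\mu\chi)\}=0$ for $\lambda\neq\mu$, and then concludes for all $\lambda,\mu$ by density; commutativity of the Taylor coefficients follows. The two arguments are equivalent in substance; yours packages the invariance identities into the Casimir property for the pencil, which is arguably the more conceptual presentation, while the original avoids any discussion of Poisson pencils. One small point of phrasing: ``translation by $\lambda\chi$ is a Poisson isomorphism'' should be read as ``translation intertwines $\{\cdot,\cdot\}^\lambda$ with the Lie--Poisson bracket'' (translation does \emph{not} preserve the Lie--Poisson structure itself); your computation uses it in the correct sense.
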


\begin{definition}
We say that the subalgebra $\mA_{\chi} \subset U(\fgl_n)$ is a lifting of $\overline{\mA_{\chi}}$ if $\mA_{\chi}$ is commutative and $\gr \mA_{\chi} = \overline{\mA_{\chi}}$. We call such subalgebras {\em quantum shift of argument subalgebras}.
\end{definition}

\begin{proposition}(\cite{fult}, \cite{taras2}, \cite{fftl}, \cite{r06})
\begin{itemize}
\item For any $\chi \in \fgl_n$ there exists a lifting $\mA_{\chi}$ of $\overline{\mA_{\chi}}$;

\item For $\chi \in \fh^{reg}$ this lifting is unique;

\item The subalgebras $\mA_{\chi}$ with $\chi \in \fgl_n^{reg}$ are free and maximal commutative;

\end{itemize}
\end{proposition}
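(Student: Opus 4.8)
The plan is to establish the three assertions in increasing order of formal difficulty, deducing the structural statements (uniqueness, freeness, maximality) from their classical counterparts in $S(\fgl_n)$, and reserving the genuinely hard analytic input — commutativity of a quantization — for the existence statement.

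\textbf{Existence.} First I would produce a commutative lifting for every $\chi \in \fgl_n$. The cleanest conceptual source is the Feigin--Frenkel center $\mathfrak{z}(\widehat{\fgl}_n)$ of the affine algebra at the critical level, a commutative subalgebra whose generators are the Segal--Sugawara vectors together with their $\partial_t$-derivatives. Specializing the loop variable at the point determined by $\chi$ and projecting to $U(\fgl_n)$ produces a subalgebra $\mA_\chi$ that is automatically commutative, the commutativity being inherited from $\mathfrak{z}(\widehat{\fgl}_n)$. In the $\fgl_n$ case at hand one may equivalently argue inside the Yangian: the coefficients of the series $\tau_p(u,C)$ generate the commutative Bethe subalgebra $B(C)$ (Theorem \ref{betheprop}(1)), and their evaluation images lie in $U(\fgl_n)$, so commutativity is again inherited (non-semisimple $\chi$ being reached by a limit within $\fgl_n$). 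Either way, the remaining task is to identify $\gr \mA_\chi$: I would compute the principal symbols of the chosen generators and match them with the Mishchenko--Fomenko generators $\partial^{c_i}_{\chi} P_i$, using that the classical limit of the Feigin--Frenkel center is exactly the argument-shift Poisson algebra. This gives $\overline{\mA_\chi} \subseteq \gr\mA_\chi$, the reverse inclusion following once freeness fixes the Poincaré series (see below).

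\textbf{Uniqueness for $\chi \in \fh^{reg}$.} By definition any two liftings $\mA_\chi, \mA'_\chi$ have the same associated graded $\overline{\mA_\chi}$. The key classical input, valid precisely because $\chi$ is regular, is that $\overline{\mA_\chi}$ is a maximal Poisson-commutative subalgebra of $S(\fgl_n)$, i.e. it coincides with its own Poisson centralizer. I would first lift this to the statement that any lifting is its own centralizer in $U(\fgl_n)$: if $b$ commutes with all of $\mA_\chi$ then its symbol $\sigma(b)$ Poisson-commutes with $\overline{\mA_\chi}$, hence lies in $\overline{\mA_\chi}$, and a descending induction on filtration degree (subtracting an element of $\mA_\chi$ with the same symbol) forces $b \in \mA_\chi$. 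Given this, uniqueness follows by a filtration argument: for $a' \in \mA'_\chi$ of degree $d$ choose $a \in \mA_\chi$ with $\sigma(a) = \sigma(a')$; the difference $a - a'$ has degree $<d$, and controlling the defect $[\,\cdot\,,\mA_\chi]$ by induction on $d$ yields $a' \in \mA_\chi$, whence $\mA'_\chi \subseteq \mA_\chi$ and by symmetry equality.

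\textbf{Freeness and maximality for $\chi \in \fgl_n^{reg}$, and the main obstacle.} Both reduce to the classical limit. The algebra is generated by the central Casimirs $P_1,\dots,P_n$ together with the shifts $\partial^{c_i}_{\chi} P_i$, $1\le c_i\le i-1$, in total $\sum_{i=1}^n i = \tfrac{1}{2}n(n+1) = \tfrac12(\dim\fgl_n+\operatorname{rk}\fgl_n)$ homogeneous elements — exactly the dimension of a maximal Poisson-commutative subalgebra. For freeness it suffices to show these symbols are algebraically independent in $S(\fgl_n)$, a Jacobian/Kostant-type computation valid for all regular $\chi$; this simultaneously forces $\gr\mA_\chi=\overline{\mA_\chi}$ since a polynomial algebra on generators with independent symbols has the same Poincaré series as its graded. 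Maximality then follows: any strictly larger commutative subalgebra would have a strictly larger, still Poisson-commutative, associated graded, contradicting maximality of $\overline{\mA_\chi}$. The substantive step is \emph{existence}: producing genuinely commuting quantizations rests on the Feigin--Frenkel theorem on the structure of the center at the critical level (or, in the $\fgl_n$ case, on Yangian commutativity together with the careful passage to non-semisimple $\chi$), whereas uniqueness, freeness and maximality are filtered-to-graded deductions from the classical maximality of $\overline{\mA_\chi}$.
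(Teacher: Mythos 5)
First, a point of reference: the paper does not prove this Proposition at all — it imports it from \cite{fult}, \cite{taras2}, \cite{fftl}, \cite{r06} — so your attempt has to be measured against the arguments in those references. Your treatment of the regular case is essentially the standard one and is sound: commutativity of the lifting comes from the Feigin--Frenkel center (or, equivalently in type A, from evaluating Bethe subalgebras as in Proposition \ref{quant}); once the quantum generators commute, algebraic independence of the Mishchenko--Fomenko symbols for $\chi\in\fgl_n^{reg}$ does force freeness and $\gr\mA_\chi=\overline{\mA_\chi}$, and maximality descends from Poisson maximality of $\overline{\mA_\chi}$ by the filtered-to-graded argument you give. The two hard points, however, are exactly where your sketch has genuine gaps.

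The first gap is existence for \emph{non-regular} $\chi$, which the first bullet requires. Both of your branches fail there: (i) ``reaching $\chi$ by a limit'' does not produce a lifting of $\overline{\mA_\chi}$, because the limit of the subalgebras $\overline{\mA_{\chi_t}}$ along a degenerating family is strictly larger than the shift-of-argument subalgebra of the limit parameter — this is visible in the paper itself, where Proposition \ref{size} shows the limit is a tensor product $\overline{\mA_{C_0}}\otimes_{Z(S(\fgl_{k_1}\oplus\ldots\oplus\fgl_{k_l}))}(\overline{\mA_{C_1}}\otimes\ldots\otimes\overline{\mA_{C_l}})$ properly containing $\overline{\mA_{C_0}}$; (ii) your identification of the associated graded rests on freeness and a Poincar\'e-series count, which is unavailable for non-regular $\chi$ since the MF generators are then algebraically dependent, so the inclusion $\gr\mA_\chi\subseteq\overline{\mA_\chi}$ (ruling out extra symbols created by cancellations) is not controlled. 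Closing exactly this case is the content of Futorny--Molev \cite{fult}, and it is not a limit argument.

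The second gap is uniqueness. Your first step — any lifting equals its own centralizer in $U(\fgl_n)$ — is correct and uses only plain Poisson maximality of $\overline{\mA_\chi}$ (\cite{tarasov}). But the induction in your second step does not close: the element $b=a'-a$ lies in \emph{neither} lifting, so the only commutation available is $[b,(\mA_\chi)_{\leq d-1}]=0$, hence its symbol is known to Poisson-commute only with the filtered piece of $\overline{\mA_\chi}$ of degree $\leq d-1$; maximality of the whole algebra $\overline{\mA_\chi}$ then gives nothing, and you cannot invoke the centralizer property, which requires commutation with elements of all degrees. What is actually needed is the degree-wise maximality statement — each filtered component $\bigoplus_{m\leq k}\overline{\mA_\chi}^{(m)}$ is maximal Poisson-commutative among subspaces of degree $\leq k$ — which is Tarasov's Theorem 4 in \cite{taras2}, quoted in this very paper as the Lemma inside the proof of Theorem \ref{clo}. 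With that input your subtraction argument terminates inside $(\mA_\chi)_{\leq d-1}$; without it, uniqueness is not a formal consequence of maximality, which is precisely why \cite{taras2} is a theorem rather than an exercise.
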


In the present paper we restrict ourselves by $\chi \in \fh$, i.e. we fix maximal torus $T$ and its tangent algebra $\fh$ and consider the family of shift of argument subalgebras parameterized by $\fh$.

A natural question here is how to define explicitly quantum shift of argument subalgebras? One possible way is as follows. Recall the symmetrization map
$$S(\fgl_n) \to U(\fgl_n),  x_1 \ldots x_{s} \mapsto \frac{1}{s!} \sum_{\sigma \in S_s} x_{\sigma(1)} \ldots x_{\sigma(s)}.$$
Here $x_1, \ldots, x_s \in \fgl_n$. Let $P_i, i = 1, \ldots, n$ be the coefficients of characteristic polynomial. Then for any $\chi \in \fh^{reg}$ elements $$\sym(\partial^{c_i}_{\chi} P_i), i = 1, \ldots, n, 1 \leq c_i \leq i - 1$$
 are free generators of $\hat A_{\chi}$. 
Another possible way to lift shift of argument subalgebras is use of Bethe subalgebras. Note that $GL_n$ is an open subset in the matrix algebra $Mat_n$ which is $\fgl_n$. So we will regard $GL_n$  as an open subset of its Lie algebra $\fgl_n$, respectively, we regard $T$ as an open subset of $\fh$. The following Proposition is a modified version of discussion at the end of Section 2 of \cite{nazol}.

\begin{proposition}
\label{quant}
Suppose that $C \in T$. Then
\begin{itemize}
    \item $\ev_z(B(C)) = \mA_{C^{-1}}$;
    \item If $C \in T^{reg}$, then images of the coefficients at degrees $u^{-1}, \ldots, u^{-p}$ of $\tau_p(u,C)$ are free generators of $\mA_{C^{-1}}$.
\end{itemize}
\end{proposition}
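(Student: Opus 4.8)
The plan is to pass to the associated graded level and identify $\gr\ev_z(B(C))$ inside $S(\fgl_n)=\gr U(\fgl_n)$ with the classical shift of argument subalgebra $\overline{\mA_{C^{-1}}}$, and then to upgrade this to an equality of subalgebras using commutativity together with maximality and the uniqueness of the lifting. First I would rewrite the generators: for $C=\diag(c_1,\dots,c_n)$ the antisymmetrizer identity for $A_pT_1(u)\cdots T_p(u-p+1)$ gives
$$\tau_p(u,C)=\sum_{1\le a_1<\dots<a_p\le n} c_{a_1}\cdots c_{a_p}\, t^{a_1\dots a_p}_{a_1\dots a_p}(u),$$
so $\ev_z(\tau_p(u,C))$ is a $C$-weighted sum of evaluated principal quantum minors. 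Putting $w=u-z$ and using $\ev_z\colon t_{ij}(u)\mapsto\delta_{ij}+E_{ij}w^{-1}$, each $\ev_z(t^{A}_{A}(u))$ is a rational function of $w$ with denominator $w(w-1)\cdots(w-p+1)$. The key computation is that, in $S(\fgl_n)$, the top PBW-degree part of the coefficient of $w^{-m}$ in $\ev_z(t^A_A(u))$ is exactly the sum of the principal $m\times m$ minors of the submatrix $E_A$ (for $1\le m\le p$); the shifts $w-l+1$ contribute only in strictly lower degree, hence do not affect the symbol.

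Assembling these, the symbol of the coefficient of $w^{-m}$ in $\ev_z(\tau_p(u,C))$ is
$$\sum_{|B|=m}\Big(\prod_{b\in B}c_b\Big)\,e_{p-m}\big(\{c_a:a\notin B\}\big)\,\Delta_B(E),$$
where $\Delta_B(E)$ is the principal minor on the index set $B$ and $e_j$ is an elementary symmetric polynomial. On the other side, the Mishchenko--Fomenko generators of $\overline{\mA_{C^{-1}}}$ are, up to scalars, the coefficients of $\det(\lambda+E+sC^{-1})$, whose degree-$m$ parts are $\sum_{|B|=m}e_{i-m}(\{c_a^{-1}:a\notin B\})\Delta_B(E)$. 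The duality $e_q(\{c_a^{-1}\})=e_{N-q}(\{c_a\})/\prod c_a$ for the $N=n-m$ indices outside $B$ shows that, degree by degree, the two families of coefficient vectors coincide up to the nonzero scalar $\det C$ (with the dictionary $p=n-i+m$); in particular $\tau_n(u,C)=\det(C)\,\qdet T(u)$ recovers the centre. Hence the symbols of the coefficients of the $\tau_p(u,C)$ generate exactly $\overline{\mA_{C^{-1}}}$.

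Now $\ev_z(B(C))$ is commutative, being a homomorphic image of $B(C)$, and $\gr\ev_z(B(C))\supseteq\overline{\mA_{C^{-1}}}$. Since the associated graded of a commutative subalgebra is Poisson-commutative and, for $C\in T^{reg}$, $\overline{\mA_{C^{-1}}}$ is a maximal Poisson-commutative subalgebra, we get $\gr\ev_z(B(C))=\overline{\mA_{C^{-1}}}$, so $\ev_z(B(C))$ is a lifting of $\overline{\mA_{C^{-1}}}$; by uniqueness of the lifting for regular $\chi=C^{-1}$ this forces $\ev_z(B(C))=\mA_{C^{-1}}$. For arbitrary $C\in T$ the same computation shows $\ev_z(B(C))$ is a commutative lifting of $\overline{\mA_{C^{-1}}}$, and the first claim follows either by taking this as the lifting $\mA_{C^{-1}}$ or by specialising from the regular locus. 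For the second claim I would count: the coefficients at $u^{-1},\dots,u^{-p}$ of $\tau_p(u,C)$, $p=1,\dots,n$, form a family of $\tfrac{n(n+1)}2$ elements whose symbols are homogeneous of degrees $1,\dots,p$ and generate the polynomial algebra $\overline{\mA_{C^{-1}}}$; since a generating set of a polynomial algebra of the matching Krull dimension is automatically algebraically independent, these symbols, and hence the elements themselves, are free generators. Passing from the $w^{-1}$- to the $u^{-1}$-expansion is a unipotent triangular change, so it preserves both the generated algebra and free generation.

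The technical heart, and the step I expect to be the main obstacle, is the symbol computation for the evaluated quantum minors together with the elementary-symmetric duality exchanging $C$ and $C^{-1}$; everything else (maximality, uniqueness of the lifting, the dimension count) is soft. The only genuinely delicate point in the non-regular case is that the lifting is no longer unique, which has to be handled by a continuity/specialisation argument from $T^{reg}$.
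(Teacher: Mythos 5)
Your overall strategy --- compute the symbols of the evaluated generators, deduce $\gr\ev_z(B(C))\supseteq\overline{\mA_{C^{-1}}}$, then upgrade to equality via maximality and uniqueness of the lifting --- is exactly the paper's strategy; the explicit quantum-minor/elementary-symmetric computation you carry out is the content the paper outsources to \cite{nazol}, and your handling of the regular case and of the free-generation count is sound.

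The gap is in the case $C\in T\setminus T^{reg}$, which is precisely the case the paper's proof is written to address. Your claim that ``the same computation shows $\ev_z(B(C))$ is a commutative lifting of $\overline{\mA_{C^{-1}}}$'' does not follow: the computation yields only the inclusion $\gr\ev_z(B(C))\supseteq\overline{\mA_{C^{-1}}}$, and for non-regular $C$ the subalgebra $\overline{\mA_{C^{-1}}}$ is \emph{not} maximal Poisson-commutative in all of $S(\fgl_n)$, so the maximality argument you used in the regular case is unavailable; without the reverse inclusion you do not even know that $\ev_z(B(C))$ is a lifting, so ``taking it as the lifting'' is circular. Your fallback of ``specialising from the regular locus'' also fails, for a structural reason that pervades this paper: the naive families $C\mapsto B(C)$ and $\chi\mapsto\mA_\chi$ are not flat at non-regular points, and limits of regular members strictly contain the naive non-regular ones (e.g.\ $\lim_{t\to 0}\overline{\mA_{C(t)}}$ in Proposition \ref{size} is a tensor product properly containing $\overline{\mA_{C_0}}$). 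A continuity argument therefore proves a statement about \emph{limit} subalgebras, not about $\mA_{C^{-1}}$ itself.

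The paper closes this gap with one extra observation that your proposal is missing: since $\tau_p(u,C)$ is invariant under conjugation by the centralizer of $C$, the image $\ev_z(B(C))$ lies in $U(\fgl_n)^{\fz_{\fgl_n}(C)}$, hence $\gr\ev_z(B(C))$ is a Poisson-commutative subalgebra of $S(\fgl_n)^{\fz_{\fgl_n}(C)}$, and $\overline{\mA_{C^{-1}}}$ \emph{is} maximal Poisson-commutative inside this invariant subalgebra (this is \cite[Lemma 4.11]{ir2}). This relative maximality forces $\gr\ev_z(B(C))=\overline{\mA_{C^{-1}}}$ for every $C\in T$, after which $\ev_z(B(C))$ is a lifting by definition. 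If you add this invariance observation and replace absolute maximality by maximality in the centralizer invariants, your argument becomes complete and coincides with the paper's.
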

\begin{proof}
In \cite{nazol} the statement is proved for $C \in T^{reg}$.
To prove the first statement for any $C \in T$, the same argument gives the following inclusion 
$$\gr \ev_z(B(C)) \supset \overline{\mA_{C^{-1}}}.$$ 
On the other hand, $\overline{\mA_{C^{-1}}}$ is a maximal Poisson commutative subalgebra in $S(\fgl_n)^{\fz_{\fgl_n}(C)}$ (where $\fz_{\fgl_n}(C^{-1})$ is the centralizer of $C$ in $\fgl_n$, see e.g. \cite[Lemma 4.11]{ir2}), so we have an equality here. Hence $\ev_z(B(C)) = \mA_{C^{-1}}$.

\end{proof}

\subsection{Limits of shift of argument subalgebras}

There is a way to define more Poisson commutative subalgebras. Recall that for any $\chi \in \fh^{reg}$ Poincar\'e series of subalgebra $\overline{\mA_{\chi}} \subset S(\fgl_n)$ is the same. It allows us to define the regular map $\theta_i: T^{reg} \to Gr(d(i), \dim S^i(\fgl_n))$, where $S^i(\fgl_n)$ is $i$-th graded component of $S(\fgl_n)$ and $d(i) = \dim \overline{\mA_{\chi}} \cap S^i(\fgl_n)$. Consider the closures $Z_i$ of $T^{reg}$ in Grasmannians and the inverse limit $Z = \varprojlim Z_i$. Every point of $Z$ gives us some Poisson commutative subalgebra with the same Poincar\'e series. We call this subalgebras limit shift of argument subalgebras and new family of subalgebras as compactification. For detailed definitions we refer the reader to \cite[Section 6]{ir3}.

The shift of argument subalgebras in $S(\fgl_n)$ depend on a parameter $C\in T^{reg}$ and do not change under the transformations $C\mapsto aC+bE, a \in \bc^{\times}, b \in \bc$. So the parameter space for the subalgebras $\mA_C \subset S(\fgl_n)$ can be regarded as $M_{0,n+1}$, i.e a point $C = \diag(z_1, \ldots, z_n)$ corresponds to $\bc\bp^1$ with an ordered set of marked points $z_1, \ldots, z_n, \infty$. Let $\overline{M_{0,n+1}}$ be the Deligne-Mumford compactification, see Section \ref{compact}.
\begin{theorem} (\cite{shuvalov}, \cite{afv}, \cite{tarasov}, \cite{ir2})
\begin{enumerate}
  \item
  For the family of shift of argument subalgebras $Z \simeq \overline{M_{0,n+1}}$.
    \item Limit subalgebras are free polynomial algebras.
    \item Limit subalgebras are maximal commutative.
    \end{enumerate}
\end{theorem}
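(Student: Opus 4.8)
The plan is to construct an isomorphism $\overline{M_{0,n+1}} \simeq Z$ explicitly and then to deduce parts (2) and (3) from it, using the constancy of the Poincar\'e series that is built into the definition of $Z$. By construction $M_{0,n+1}$ sits as a common dense open subset of both spaces: it is the image of $T^{reg}$ under the quotient by the transformations $C \mapsto aC + bE$, and this image is dense in $Z$. So the heart of the matter is to extend the tautological family of subalgebras $\overline{\mA_\chi}$ across the boundary of $\overline{M_{0,n+1}}$ and to match boundary points with limit subalgebras bijectively.

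First I would analyze how $\overline{\mA_\chi}$ behaves when a cluster of eigenvalues $z_{i_1},\ldots,z_{i_k}$ of $C$ collide. Writing $z_{i_j}=z_0+t\,w_{i_j}$ and letting $t\to 0$ along a one-parameter family, I expect the limit subalgebra to factor, through a centralizer construction, into the shift of argument subalgebra of the coarse configuration in which the whole cluster is replaced by the single point $z_0$, together with the shift of argument subalgebra of the internal configuration $(w_{i_1},\ldots,w_{i_k})$ living in a smaller $S(\fgl_k)$. This is exactly the recursive product structure $\prod_i \overline{M_{0,k_i}}$ carried by the closures of the strata of $\overline{M_{0,n+1}}$, and the partial order by edge contraction corresponds to the order of successive collisions. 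Iterating over the tree of a stable curve assigns to each point of $\overline{M_{0,n+1}}$ a well-defined limit subalgebra with the generic Poincar\'e series, i.e. a point of $Z$, and this defines a morphism $f\colon\overline{M_{0,n+1}}\to Z$ extending the identity on $M_{0,n+1}$.

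To upgrade $f$ to an isomorphism I would prove it is bijective. Surjectivity is clear from density, so the content is injectivity: distinct stable curves give distinct subalgebras. I would argue by induction on $n$, using the recursive decomposition to read off from a limit subalgebra both the combinatorial type of the tree (the pattern of collisions) and the internal moduli on each component. Since $\overline{M_{0,n+1}}$ is smooth, the bijective birational morphism $f$ is an isomorphism, either by Zariski's main theorem once the normality of $Z$ is checked, or---more robustly---by directly constructing the inverse morphism $Z\to\overline{M_{0,n+1}}$ from the recursive data. Alternatively one may invoke Kapranov's iterated blow-up description of $\overline{M_{0,n+1}}$ and verify that the blow-ups resolve precisely the indeterminacy of the maps $M_{0,n+1}\to Z_i$.

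Parts (2) and (3) I would obtain by semicontinuity. By the very definition of the $Z_i$ the graded dimensions $d(i)$ are constant, so every limit subalgebra has the Poincar\'e series of the generic $\overline{\mA_\chi}$, which is a free polynomial algebra whose number of generators equals the Mishchenko--Fomenko bound $\tfrac12(\dim\fgl_n+\operatorname{rk}\fgl_n)$. Poisson-commutativity is a closed condition, so each limit is Poisson-commutative; its transcendence degree equals that same maximal bound, so it is automatically maximal Poisson-commutative, giving (3). For (2) the delicate point is to promote ``has the Hilbert series of a polynomial algebra'' to ``is a polynomial algebra'': from the recursive description I would exhibit explicit algebraically independent homogeneous generators of each limit subalgebra whose degrees match those of $\overline{\mA_\chi}$, so that they already span the whole limit as a graded vector space. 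The main obstacle throughout is the collision analysis itself---controlling the leading term of $\partial_\chi^{c}P_i$ as the eigenvalues degenerate and proving the resulting generators stay algebraically independent rather than collapsing; once this recursive structure is established cleanly, parts (1)--(3) follow as above.
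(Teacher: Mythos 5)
The paper does not actually prove this theorem: it is imported wholesale from \cite{shuvalov}, \cite{afv}, \cite{tarasov} and \cite{ir2}, so your outline has to be judged against those works. In broad strokes you follow the same route they do — the cluster-collision analysis producing the recursive tensor-product structure of limits (this is exactly what the paper reproduces as Propositions \ref{tensorprod} and \ref{size}), and freeness via limits of the generators plus constancy of the Poincar\'e series, which is Shuvalov's argument, with the algebraic independence of the limiting generators being the acknowledged but unresolved hard point. However, two of your deductions contain genuine gaps. The most serious one concerns maximality: your claim that a Poisson-commutative subalgebra whose transcendence degree equals the bound $\tfrac12(\dim\fgl_n+\operatorname{rk}\fgl_n)$ is ``automatically maximal'' is false. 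Maximal transcendence degree only shows that any larger Poisson-commutative subalgebra is \emph{algebraic} over the given one (compare $\bc[x^2,x^3]\subset\bc[x]$ with $x$ Poisson-central: equal transcendence degrees, yet the smaller algebra is not maximal). One must additionally prove that the limit subalgebra is algebraically closed in $S(\fgl_n)$; for regular semisimple $\chi$ this is precisely Tarasov's theorem \cite{tarasov}, which uses the free polynomial structure in an essential way, and for boundary points one needs the relative statement that $\overline{\mA_C}$ with non-regular $C$ is maximal inside the centralizer invariants $S(\fgl_n)^{\fz_{\fgl_n}(C)}$ (cf.\ \cite[Lemma 4.11]{ir2}), fed into an induction along the recursive structure. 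Maximality is not a closed condition in families, so no semicontinuity argument can replace this; that limits of maximal subalgebras stay maximal \emph{is} the theorem.

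The second gap is in the identification $Z\simeq\overline{M_{0,n+1}}$. Zariski's main theorem turns your bijective birational morphism $f\colon\overline{M_{0,n+1}}\to Z$ into an isomorphism only if $Z$ is normal, and $Z$ is defined as a closure inside a product of Grassmannians, so its normality is exactly as unknown as the theorem itself (bijective birational morphisms onto non-normal targets are routinely not isomorphisms, e.g.\ the normalization of a cuspidal curve). Your fallback — ``directly constructing the inverse morphism from the recursive data'' — is where the actual content lies: the recursive description lets you read the tree and the moduli off a limit subalgebra \emph{point by point}, which gives set-theoretic injectivity but not a morphism $Z\to\overline{M_{0,n+1}}$. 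The missing idea, which is the contribution of \cite{afv}, is to pass to the quadratic graded components: these form a family of Gaudin subalgebras whose parameter variety is proved in \cite{afv} to be $\overline{M_{0,n+1}}$, and the projection of $Z$ to the corresponding Grassmannian of quadratic components is manifestly regular, furnishing the inverse morphism; composing the two maps and checking they are the identity on the dense open part then closes the argument without any normality hypothesis. Without this (or some substitute), your plan for part (1) does not go through.
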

In \cite{ir2} there is a practical way to assign a commutative subalgebra to any $X \in \overline{M_{0,n+1}}$. We will extensively use this description so we reproduce it here.  

Let $X_\infty$ be the irreducible component of $X$ containing the marked point $\infty$. To any distinguished point $\alpha\in X_\infty$ we assign the number $k_\alpha$ of marked points on the (reducible) curve $X_\alpha$ attached to $X_\infty$ at $\alpha$. Let $C$ be the diagonal matrix with the eigenvalues $\alpha$ of multiplicity $k_\alpha$ for all distinguished points $\alpha\in X_\infty$. Then the corresponding shift of argument subalgebra $F(C)\subset S(\fgl_n)$ is centralized by the Lie subalgebra $\bigoplus\limits_{\alpha}\fgl_{k_\alpha}$ in $\fgl_n$ and contains the Poisson center $S(\bigoplus\limits_\alpha \fgl_{k_\alpha})^{\bigoplus\limits_\alpha \fgl_{k_\alpha}}$. The subalgebra corresponding to the curve $X$ is just the product of $F(C)\subset S(\fgl_n)$ and the subalgebras corresponding to $X_\alpha$ in $S(\fgl_{k_\alpha})\subset S(\fgl_n)$ for all distinguished points $\alpha\in X_\infty$ (for this we need to define the point $\infty$ on each $X_\alpha$ -- it is just the intersection with $X_\infty$).


\begin{proposition}(\cite[Proposition 4.7]{ir2})
\label{tensorprod}
The subalgebra $\overline{\mA_X}$ corresponding to a degenerate curve $X$ is the tensor product $\overline{\mA_C} \otimes_{S(\bigoplus\limits_\alpha \fgl_{k_\alpha})^{\bigoplus\limits_\alpha \fgl_{k_\alpha}}}\bigotimes\limits_\alpha \overline{\mA_{X_\alpha}}$. 

\end{proposition}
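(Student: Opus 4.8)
The plan is to show that the product subalgebra $\overline{\mA_C}\cdot\prod_\alpha\overline{\mA_{X_\alpha}}$ (which is how $\overline{\mA_X}$ is defined) is a genuine tensor product over the common Poisson center, i.e.\ that the multiplication map
\[
m\colon \overline{\mA_C}\otimes_Z\bigotimes_\alpha\overline{\mA_{X_\alpha}}\longrightarrow S(\fgl_n),\qquad Z:=S(\textstyle\bigoplus_\alpha\fgl_{k_\alpha})^{\bigoplus_\alpha\fgl_{k_\alpha}},
\]
is an isomorphism onto its image $\overline{\mA_X}$. First I would check that $m$ is a well-defined homomorphism with commutative image. By \cite[Lemma 4.11]{ir2} the subalgebra $\overline{\mA_C}$ is centralized by $\fz:=\bigoplus_\alpha\fgl_{k_\alpha}=\fz_{\fgl_n}(C)$; since the Poisson bracket is a derivation, commuting with the linear generators $\fgl_{k_\alpha}$ forces $\overline{\mA_C}$ to Poisson-commute with all of $S(\fgl_{k_\alpha})$, hence with every $\overline{\mA_{X_\alpha}}$. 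Moreover $\overline{\mA_{X_\alpha}}\subset S(\fgl_{k_\alpha})$ and $\overline{\mA_{X_\beta}}\subset S(\fgl_{k_\beta})$ Poisson-commute for $\alpha\ne\beta$ because $\fgl_{k_\alpha}$ and $\fgl_{k_\beta}$ are commuting summands of $\fz$. Thus the product subalgebra is commutative and $m$ is surjective onto $\overline{\mA_X}$ by definition. Finally $\overline{\mA_C}\supseteq Z$ (it contains the Poisson center of the centralizer, again by \cite[Lemma 4.11]{ir2}) and $\overline{\mA_{X_\alpha}}\supseteq Z_\alpha:=S(\fgl_{k_\alpha})^{\fgl_{k_\alpha}}$ (the center lies in every maximal Poisson-commutative subalgebra), so that $Z=\bigotimes_\alpha Z_\alpha$ is identified inside both factors and $m$ indeed factors through $\otimes_Z$.

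The heart of the matter is injectivity of $m$, which I would deduce from two facts: that the source is an integral domain and that its transcendence degree equals that of $\overline{\mA_X}$. For domain-ness I would use that shift-type subalgebras are free polynomial algebras in which the relevant Casimirs occur among a system of free generators: writing $\overline{\mA_{X_\alpha}}=Z_\alpha\otimes_{\bc}H_\alpha$ and $\overline{\mA_C}=Z\otimes_{\bc}H_C$ with $H_\alpha,H_C$ polynomial complements, the source collapses to
\[
\overline{\mA_C}\otimes_Z\bigotimes_\alpha\overline{\mA_{X_\alpha}}\;\cong\;Z\otimes_{\bc}H_C\otimes_{\bc}\bigotimes_\alpha H_\alpha,
\]
which is visibly a polynomial ring, hence a domain. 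For the dimension count, the limit subalgebra $\overline{\mA_X}$ is free of the universal Poincar\'e series of a regular shift subalgebra, so of transcendence degree $\tfrac12(n^2+n)$; on the other hand the completeness theory of shift subalgebras gives $\operatorname{trdeg}\overline{\mA_C}=\tfrac12(n^2+n)-\sum_\alpha\binom{k_\alpha}{2}$ (the defect being governed by the index of the centralizer $\fz$), while $\operatorname{trdeg}\overline{\mA_{X_\alpha}}-\operatorname{trdeg}Z_\alpha=\binom{k_\alpha}{2}$. Adding these, the source has transcendence degree $\tfrac12(n^2+n)$ as well. A surjection of finitely generated integral domains of equal transcendence degree has zero kernel, so $m$ is an isomorphism.

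I expect the main obstacle to be the two structural inputs about $\overline{\mA_C}$ at the \emph{non-regular} point $C$: that it is a free polynomial algebra admitting the block Casimirs $Z$ among its free generators (needed for the splitting $\overline{\mA_C}=Z\otimes_{\bc}H_C$ and hence for domain-ness), and that its transcendence degree is exactly $\tfrac12(n^2+n)-\sum_\alpha\binom{k_\alpha}{2}$. Both rest on the fine structure of $\overline{\mA_C}$ as a maximal Poisson-commutative subalgebra of $S(\fgl_n)^{\fz}$, which is the content of \cite[Lemma 4.11]{ir2}. If one prefers to avoid the exact transcendence-degree formula, it suffices to bound $\operatorname{trdeg}$ of the source from above by $\tfrac12(n^2+n)$ and combine it with the lower bound coming from surjectivity; either way the delicate point is controlling the degenerate subalgebra $\overline{\mA_C}$ and the way $Z$ sits inside it. Finally, since the $X_\alpha$ may themselves be reducible, one applies the statement recursively, the base case being an irreducible component where $C$ is regular and $\overline{\mA_C}$ is the ordinary shift of argument subalgebra.
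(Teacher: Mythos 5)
Your proposal proves only half of the statement, and the half it omits is the one that carries the real content. You take as the \emph{definition} of $\overline{\mA_X}$ the product $\overline{\mA_C}\cdot\prod_\alpha\overline{\mA_{X_\alpha}}$, but that is not how $\overline{\mA_X}$ is defined. In this paper (following \cite{ir2}, where the cited Proposition 4.7 is proved), $\overline{\mA_X}$ is the \emph{limit} subalgebra: the point of $Z=\varprojlim Z_i$ (the closure of the family $\{\overline{\mA_\chi}\}_{\chi\in\fh^{reg}}$ inside products of Grassmannians) corresponding to $X$ under the isomorphism $Z\simeq\overline{M_{0,n+1}}$. The sentence preceding the proposition, describing $\overline{\mA_X}$ as a product, is an informal restatement of the cited result, not a definition you may assume. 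The heart of the proposition is therefore the identification of the limit with the product, and nothing in your argument touches the degeneration: you never take a one-parameter family $C(t)$ degenerating to $X$ and show that $\lim_{t\to 0}\overline{\mA_{C(t)}}$ contains the product. This is exactly what Proposition \ref{size} (\cite[Lemma 4.9]{ir2}) does for simple limits $C(t)=C_0+t\cdot\diag(C_1,\ldots,C_l)$: one computes that the leading terms of the generators of $\overline{\mA_{C(t)}}$ as $t\to 0$ generate $\overline{\mA_{C_0}}$ while appropriate subleading terms generate the $\overline{\mA_{C_i}}$'s; the general $X$ is then handled by iterating simple limits, since (as remarked after Proposition \ref{size}) every point of $\overline{M_{0,n+1}}$ is reached by such iterations. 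Without this step your proof establishes a property of the product subalgebra but says nothing about which subalgebra the point $X$ actually parametrizes.

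The half you do prove is essentially sound and is not wasted. The commutativity of the product (via the derivation property of the bracket), the identification of $Z=S(\bigoplus_\alpha\fgl_{k_\alpha})^{\bigoplus_\alpha\fgl_{k_\alpha}}$ inside both factors, the domain argument via the splittings $\overline{\mA_C}=Z\otimes_{\bc}H_C$ and $\overline{\mA_{X_\alpha}}=Z_\alpha\otimes_{\bc}H_\alpha$, and the count $\bigl(\frac12(n^2+n)-\sum_\alpha\binom{k_\alpha}{2}\bigr)+\sum_\alpha\binom{k_\alpha}{2}=\frac12(n^2+n)$ together with the fact that a surjection of finitely generated domains of equal transcendence degree is injective, correctly show that the product is a genuine tensor product over $Z$ (granting the structural inputs on the singular subalgebra $\overline{\mA_C}$, which are indeed \cite[Lemma 4.11]{ir2}). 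In a complete proof this computation serves as the \emph{size} bound: it shows the product has the full Poincar\'e series, so once the limit computation gives $\lim_{t\to 0}\overline{\mA_{C(t)}}\supseteq\overline{\mA_C}\cdot\prod_\alpha\overline{\mA_{X_\alpha}}$, flatness of the family (every limit subalgebra has the same Poincar\'e series as a generic member) forces equality. So your argument is a usable second half; what is missing is the first half, the actual computation of the limit.
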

Let us describe the simplest limit subalgebras corresponding to the case when all the curves $X_\alpha$ are irreducible. 
\begin{proposition}{\cite[Lemma 4.9]{ir2}}
\label{size} 
Suppose that $n=k_1+ \ldots +k_l$, where $k_1, \ldots, k_l \in \bz_{\geq 1}$
Let $C_0=\diag(\underbrace{a_1, \ldots, a_1}_{k_1}, \ldots, \underbrace{a_l, \ldots, a_l}_{k_l})$ and $C_i=\diag(\underbrace{b_{i,1}, \ldots, b_{i,k_i}}_{k_i})$ for $i=1,\ldots,l$ such that $a_r\ne a_s$ and $b_{i,r}\ne b_{i,s}$ for $r\ne s$. 
Then the element
\begin{eqnarray*}C(t) := C_0 + t \cdot \diag(C_1, \ldots, C_l)\end{eqnarray*}
belongs to $T^{reg}$ for small $t \not = 0$.
The subalgebra $\lim_{t \to 0} \overline{\mA_{C(t)}}$ is the tensor product $$\overline{\mA_{C_0}} \otimes_{Z(S(\fgl_{k_1}\oplus \ldots \oplus \fgl_{k_l}))} (\overline{\mA_{C_1}}\otimes\ldots\otimes \overline{\mA_{C_l}}).$$ 
Here $\overline{\mA_{C_i}}$ is a subalgebra in $S(\fgl_{k_i})\subset S(\fgl_{k_1}\oplus \ldots \oplus \fgl_{k_l})$.

\end{proposition}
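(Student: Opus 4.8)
The plan is to treat the two assertions separately: the regularity of $C(t)$ is an elementary eigenvalue computation, while the identification of the limit subalgebra is carried out degree by degree in the inverse limit of Grassmannians described before Proposition \ref{tensorprod}.

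\emph{Regularity.} First I would compute the eigenvalues of $C(t)$, which are $a_i + t\,b_{i,j}$ for $i=1,\dots,l$ and $j=1,\dots,k_i$. Two eigenvalues lying in the same block differ by $t(b_{i,j}-b_{i,j'})$, nonzero for $t\ne 0$ because the $b_{i,j}$ are distinct; two eigenvalues in distinct blocks $i\ne i'$ differ by $(a_i-a_{i'})+t(b_{i,j}-b_{i',j'})$, nonzero for small $t$ because $a_i\ne a_{i'}$. Hence $C(t)\in T^{reg}$ for all sufficiently small $t\ne 0$, so the family $\overline{\mA_{C(t)}}$ is defined and, by compactness of the Grassmannians, the limit $A:=\lim_{t\to 0}\overline{\mA_{C(t)}}$ exists. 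Since Poisson commutativity is a closed condition, $A$ is Poisson-commutative, and by the theorem on limit subalgebras it is free with the same Poincar\'e series as a generic $\overline{\mA_\chi}$.

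\emph{The leading family.} Writing $D=\diag(C_1,\dots,C_l)$ and using that the directional derivatives commute, $\partial_{C(t)}=\partial_{C_0}+t\,\partial_D$, so that $\partial_{C(t)}^c P_m=\sum_{j=0}^c\binom{c}{j}t^j\,\partial_{C_0}^{c-j}\partial_D^j P_m$, whose $t^0$-coefficient is $\partial_{C_0}^c P_m$. Letting $t\to 0$ shows that each $\partial_{C_0}^c P_m$ lies in $A$; these generate $\overline{\mA_{C_0}}$, so $\overline{\mA_{C_0}}\subseteq A$. In particular $A$ contains $Z=S(\bigoplus_i\fgl_{k_i})^{\bigoplus_i\fgl_{k_i}}$, which $\overline{\mA_{C_0}}$ is known to contain.

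\emph{The block-internal family, which I expect to be the main obstacle.} The harder inclusion is $\overline{\mA_{C_i}}\subseteq A$, and here the non-regularity of $C_0$ is used positively: the leading shifts $\partial_{C_0}^c P_m$ satisfy nontrivial relations, and the subleading $t$-coefficients of these relations survive in the Grassmannian limit. Concretely, inside a fixed graded component I would take combinations of the $\partial_{C(t)}^c P_m$ whose $t^0$-parts cancel, divide by the appropriate power of $t$, and pass to $t\to 0$; I expect the resulting elements to be precisely the shifts $\partial_{C_i}^c P_m^{(i)}$ of the invariants $P_m^{(i)}$ of the $i$-th diagonal block $E^{(i)}$ of $E$, using that $\partial_D$ restricted to block $i$ is the derivative along the regular element $C_i\in\fgl_{k_i}$. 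Verifying that these subleading terms genuinely lie in $S(\fgl_{k_i})$ and that they sweep out all of $\overline{\mA_{C_i}}$ is the technical heart of the argument.

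\emph{Conclusion.} Granting both inclusions gives $A\supseteq \overline{\mA_{C_0}}\otimes_{Z}(\overline{\mA_{C_1}}\otimes\cdots\otimes\overline{\mA_{C_l}})$. The right-hand side is Poisson-commutative, since its factors pairwise commute and overlap only in $Z$; a generator count (the $\binom{k_i}{2}$ non-central generators from each $\overline{\mA_{C_i}}$ together with those of $\overline{\mA_{C_0}}$) shows it is free of the generic size $\binom{n+1}{2}$, equivalently that it is maximal Poisson-commutative. As $A$ is commutative with the generic Poincar\'e series and contains this subalgebra, the two agree in every degree, which yields the claimed equality.
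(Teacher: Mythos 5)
First, a point of reference: the paper contains no proof of this Proposition at all --- it is quoted verbatim from \cite[Lemma 4.9]{ir2}, and the argument there ultimately rests on Shuvalov's structure theory for limits of Mishchenko--Fomenko subalgebras \cite{shuvalov}. Judged as a self-contained argument, your proposal has a genuine gap, and it is exactly the one you flag yourself: the inclusion $\overline{\mA_{C_i}}\subseteq A:=\lim_{t\to 0}\overline{\mA_{C(t)}}$ is never proved, only ``expected''. The preliminary steps are fine (the regularity computation; the fact that the Grassmannian limit is a Poisson-commutative subalgebra whose graded components have the generic dimensions; the inclusion $\overline{\mA_{C_0}}\subseteq A$ obtained by letting $t\to0$ in the generators). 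Moreover, the cancellation mechanism you sketch is indeed the right one: taking the $P_i$ to be the coefficients of the characteristic polynomial and, say, $n=4$, $C_0=\diag(a,a,a,b)$, the relation $(C_0-a)(C_0-b)=0$ forces the $t^0$-part of $2a^2P_2-2a\,\partial_{C(t)}P_3+\partial^2_{C(t)}P_4$ to vanish, and its $t^1$-coefficient works out to $2(b-a)\,\partial_{C_1}\!\det E^{(1)}$, where $E^{(1)}$ is the upper-left $3\times3$ block --- precisely the nontrivial degree-two generator of $\overline{\mA_{C_1}}$. But promoting this to a proof requires controlling \emph{all} linear relations among the leading terms $\partial^c_{C_0}P_m$ (they are governed by the minimal polynomial of $C_0$ and by the decomposition of shifted invariants into block invariants), showing the resulting subleading terms lie in $S(\bigoplus_i\fgl_{k_i})$, and showing they exhaust every $\overline{\mA_{C_i}}$. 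That is the content of Shuvalov's theorem, and it is the step your proposal does not carry out.

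The concluding dimension count hides a second gap of the same nature. You assert that a ``generator count'' gives the right-hand side the generic size, but the generators of $\overline{\mA_{C_0}}$ and the block invariants are not independent: already for $n=3$, $C_0=\diag(a,a,b)$ one has $\partial_{C_0}P_3=a\,P_2+(b-a)\left(E_{11}E_{22}-E_{12}E_{21}\right)$, so a generator of $\overline{\mA_{C_0}}$ is a linear combination of a $\fgl_3$-invariant and the block determinant. Consequently, computing the Poincar\'e series of $\overline{\mA_{C_0}}\otimes_{Z}(\overline{\mA_{C_1}}\otimes\cdots\otimes\overline{\mA_{C_l}})$ requires knowing a free generating set (equivalently the Poincar\'e series) of $\overline{\mA_{C_0}}$ for the \emph{non-regular} element $C_0$, and knowing that the tensor product over $Z\left(S(\bigoplus_i\fgl_{k_i})\right)$ creates no further relations; both are theorems (Shuvalov, Tarasov), not consequences of counting the listed generators. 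In short: your skeleton is correct and the easy half is done properly, but the two steps that make the statement a theorem --- the block inclusion and the size comparison --- are exactly the ones left open, and they are where all the work in the cited literature lives.
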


Note that any limit can be obtained by iterations of limit from Proposition \ref{size}, which we call simple limits.

\begin{theorem} (\cite{taras2})
\label{t2}
Let $X \in \overline{M_{0,n+1}}$.
There exists the unique lifting $\mA_X$ of the algebra $\overline{\mA_X}$. The structure of subalgebras $\mA_X$ is the same as that of $\overline{\mA_X}$ described in Propositions \ref{tensorprod} and \ref{size}.
\end{theorem}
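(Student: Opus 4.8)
The plan is to obtain existence from a closure construction inside Grassmannians and then deduce uniqueness by reducing to the simple limits of Proposition~\ref{size} and inducting on $n$, using the already-established uniqueness of the lifting for regular $\chi\in\fh^{reg}$. For existence I would run the quantum analogue of the classical construction giving $Z\simeq\overline{M_{0,n+1}}$. Since every $\mA_C$ with $C\in T^{reg}$ is a free polynomial algebra with the same Poincar\'e series, the numbers $D(i)=\dim\big(\mA_C\cap U_i(\fgl_n)\big)$, with $U_i(\fgl_n)$ the $i$-th PBW-filtered piece, are independent of $C$. This produces regular maps $T^{reg}\to Gr(D(i),U_i(\fgl_n))$; taking closures and the inverse limit attaches to each $X\in\overline{M_{0,n+1}}$ a filtered subspace $\mA_X$, which is a commutative subalgebra since commutativity is Zariski-closed. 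Because the Poincar\'e series is constant along $T^{reg}$, the associated-graded map has no dimension jumps on this family and extends continuously to the closure; hence $\gr\mA_X=\lim_{t\to0}\gr\mA_{C(t)}=\lim_{t\to0}\overline{\mA_{C(t)}}=\overline{\mA_X}$, so $\mA_X$ is a lifting with the prescribed Poincar\'e series.

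For uniqueness it suffices, since every limit is an iteration of simple limits, to treat one simple limit and induct on $n$. In the notation of Proposition~\ref{size} put $\mathfrak l=\fgl_{k_1}\oplus\dots\oplus\fgl_{k_l}$, so that $\overline{\mA_X}=\overline{\mA_{C_0}}\otimes_{Z(S(\mathfrak l))}(\overline{\mA_{C_1}}\otimes\dots\otimes\overline{\mA_{C_l}})$ with $\overline{\mA_{C_0}}\subset S(\fgl_n)^{\mathfrak l}$ and $\overline{\mA_{C_i}}\subset S(\fgl_{k_i})$. The candidate lifting is the product inside $U(\fgl_n)$ of a lifting $\mA_{C_0}\subset U(\fgl_n)^{\mathfrak l}$ with the liftings $\mA_{C_i}\subset U(\fgl_{k_i})$, glued over $Z(U(\mathfrak l))$; these commute because $\mA_{C_0}$ centralizes $\mathfrak l$, hence each $U(\fgl_{k_i})$, while the $\mA_{C_i}$ sit in distinct commuting blocks, and taking $\gr$ recovers $\overline{\mA_X}$. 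Each $\mA_{C_i}$ is unique by the induction hypothesis (here $k_i<n$ for a proper simple limit), while the block-eigenvalues $a_1,\dots,a_l$ being pairwise distinct, $\overline{\mA_{C_0}}$ is a regular-type shift-of-argument subalgebra relative to the Levi $\mathfrak l$ (the marked points on the top component $X_\infty$ are distinct), so its lifting $\mA_{C_0}$ is unique by the regular case applied to the pair $(\fgl_n,\mathfrak l)$.

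The hard part is the uniqueness, specifically the \emph{rigidity} of the decomposition: that an arbitrary lifting $\mA_X$ of $\overline{\mA_X}$ is forced to split as this tensor product, with its factors landing in $U(\fgl_n)^{\mathfrak l}$ and in the blocks $U(\fgl_{k_i})$. Two ingredients are needed: first, that $\mA_X$ necessarily contains the canonical copy of $Z(U(\mathfrak l))$ lifting $Z(S(\mathfrak l))\subset\overline{\mA_X}$; and second, that commutativity, imposed order by order through the associated graded, rigidifies the lower-order terms of the transverse and block generators so that they must agree with those of the uniquely determined factors. Equivalently, one shows that the obstruction to deforming the lifting, an $H^1$ that vanishes in the regular case, continues to vanish for the simple limits, which is exactly what the tensor decomposition together with the induction hypothesis delivers.
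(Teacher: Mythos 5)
First, a point of comparison: the paper itself does not prove Theorem~\ref{t2} --- it is imported wholesale from Tarasov \cite{taras2}, and the only piece of that machinery the paper actually quotes is Tarasov's Theorem~4 (the Lemma inside the proof of Theorem~\ref{clo}), stating that each filtered component $\bigoplus_{m=0}^k \overline{\mA_{X}}^{(m)}$ is a \emph{maximal commutative subspace} of $\bigoplus_{m=0}^k S(\fgl_n)^{(m)}$. That maximality property is the engine of the whole theorem, and it is exactly what your proposal never establishes or invokes.

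Concretely, two steps fail. In the existence half, the inference ``the Poincar\'e series is constant along $T^{reg}$, hence the associated-graded map has no dimension jumps in the closure'' is not valid: for a limit of filtered subspaces one can perfectly well have $F_i(\lim)\cap U_{i-1}(\fgl_n)\supsetneq F_{i-1}(\lim)$ even though all $\dim F_i$ are constant along the family (take $F_1(t)=\spann(x)$, $F_2(t)=\spann(x,y+tz)$ with $y\in U_1$, $z\notin U_1$), so $\gr$ of the limit can a priori be strictly larger in low degrees than the limit of the $\gr$'s; ruling this out is precisely what the maximality lemma is used for in the paper's proof of Theorem~\ref{clo}. (Existence itself can be rescued by your ``candidate lifting'', the tensor product of quantum factors, which is commutative and lifts $\overline{\mA_X}$ by Propositions~\ref{tensorprod} and~\ref{size}.) The fatal gap is in the uniqueness half: the entire content of the theorem is your ``rigidity'' step --- that an \emph{arbitrary} commutative lifting must contain the canonical $Z(U(\mathfrak{l}))$ and split as the tensor product --- and for this you offer only an appeal to an undefined obstruction group (``an $H^1$ that vanishes''), asserting its vanishing is ``exactly what the tensor decomposition together with the induction hypothesis delivers''. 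It is not: the induction hypothesis gives uniqueness of liftings of the \emph{factors}, which says nothing about whether an arbitrary lifting of the whole algebra decomposes into such factors. Moreover, the uniqueness of $\mA_{C_0}$ that you invoke ``for the pair $(\fgl_n,\mathfrak{l})$'' is not the regular case stated in the paper, since $C_0$ is not regular in $\fgl_n$; within this paper that relative statement is Proposition~\ref{liftt}, which is itself deduced \emph{from} Theorem~\ref{t2}, so your argument is circular relative to the paper's logic. Tarasov's actual proof avoids induction on the combinatorial structure of $X$ entirely: from the maximal-commutative-subspace property of each filtered piece one compares any two liftings filtration level by filtration level and forces them to coincide. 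Without that input (or a proof of it), the proposal does not close.
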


Note that from Theorem \ref{t2} we cannot conclude that the parameter space for the closure of the corresponding family of subalgebras in $U(\fgl_n)$ is the same. The following Theorem is \cite[Theorem 10.8]{hkrw}. We give here another proof specific to $\fgl_n$ case.
\begin{theorem}
\label{clo}
The closure of the family $\mA_{\chi}, \chi \in \fh^{reg}$ is isomorphic to $\overline{M_{0,n+1}}$.
\end{theorem}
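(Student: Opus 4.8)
The plan is to mimic the construction of the classical closure $Z\simeq\overline{M_{0,n+1}}$ on the quantum side and then compare the two closures through the associated graded map. The PBW filtration makes each $\mA_{\chi}$ a filtered subalgebra with $\gr\mA_{\chi}=\overline{\mA_{\chi}}$, so $\dim\big(\mA_{\chi}\cap U(\fgl_n)_{\le i}\big)=\tilde d(i):=\sum_{j\le i}d(j)$ is independent of $\chi\in\fh^{reg}$. This yields regular maps $\tilde\theta_i\colon\fh^{reg}\to Gr\big(\tilde d(i),U(\fgl_n)_{\le i}\big)$, $\chi\mapsto\mA_{\chi}\cap U(\fgl_n)_{\le i}$; I would set $\tilde Z_i$ to be the closure of the image and $\tilde Z=\varprojlim\tilde Z_i$, exactly parallel to the classical $Z_i,Z$. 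Being the closure of the image of the irreducible variety $\fh^{reg}$ (the inverse system stabilises as the subalgebras are finitely generated), $\tilde Z$ is irreducible and projective. The comparison map is the associated graded morphism $\gamma\colon\tilde Z\to Z\simeq\overline{M_{0,n+1}}$ sending $\mA_{\chi}\mapsto\overline{\mA_{\chi}}$ on $\fh^{reg}$; it is an isomorphism over the open stratum $M_{0,n+1}$, hence birational. Since $\overline{M_{0,n+1}}$ is smooth and thus normal, it suffices to prove that $\gamma$ is a bijective proper morphism: a birational proper morphism that is bijective onto a normal variety is an isomorphism by Zariski's main theorem.

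Injectivity is the formal part, and it is where Theorem~\ref{t2} enters. Every point $\mathcal B\in\tilde Z$ is a commutative filtered subalgebra, commutativity being a closed condition preserved in the limit, and once $\gamma$ is known to be the genuine associated graded map we have $\gr\mathcal B=\overline{\mA_{\gamma(\mathcal B)}}$. Thus $\mathcal B$ is a lifting of $\overline{\mA_{\gamma(\mathcal B)}}$ in the sense defined above, and uniqueness of the lifting (Theorem~\ref{t2}) forces $\mathcal B=\mA_{\gamma(\mathcal B)}$; in particular, two points with the same image coincide. Surjectivity is then automatic: $\gamma$ is proper with dense image containing $\fh^{reg}$, so its image is all of $\overline{M_{0,n+1}}$.

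The genuine content, and the step I expect to be the main obstacle, is making sense of $\gamma$ as a morphism on all of $\tilde Z$ — equivalently, showing that the Poincar\'e series is preserved in every boundary limit, so that passing to the associated graded is continuous along $\tilde Z$. This is nontrivial because $W\mapsto\gr W$ is only semicontinuous: a filtered subspace can acquire lower-degree leading terms in a limit and drop the dimension of some graded piece. To control this I would reduce to the \emph{simple limits} of Proposition~\ref{size}: by Propositions~\ref{tensorprod} and~\ref{size} every boundary point of $Z\simeq\overline{M_{0,n+1}}$ is reached by iterating one-parameter degenerations $C(t)=C_0+t\,\diag(C_1,\dots,C_l)$, whose classical limit has the explicit tensor-product form. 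It then remains to check that along such a degeneration the quantum family $\mA_{C(t)}$ converges in the filtered Grassmannian to the corresponding tensor-product subalgebra (fibred over the appropriate centre, as in Proposition~\ref{tensorprod}) with no loss of Poincar\'e series. For this I would take the explicit free generators $\sym(\partial^{c_i}_{\chi}P_i)$ of $\mA_{\chi}$ and track their leading terms as $t\to 0$, lifting the classical computation of \cite{ir2} behind Proposition~\ref{size} verbatim to $U(\fgl_n)$; since the number of generators and their degrees are stable under the rescaling by $t$, the limit has the same Poincar\'e series and, being commutative, is forced by Theorem~\ref{t2} to be the unique lift $\mA_X$. Establishing this stable behaviour of the generators along every simple limit is the crux; once it is in hand, $\gamma$ is a well-defined bijective proper morphism onto a normal variety, and the theorem follows.
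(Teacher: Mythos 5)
Your overall skeleton---the filtered Grassmannians $\tilde Z_i$, the inverse limit $\tilde Z$, the comparison with $Z\simeq\overline{M_{0,n+1}}$ via the associated graded, injectivity from the uniqueness of liftings (Theorem \ref{t2}), and properness/birationality to finish---is the same as the paper's, and you correctly identify the crux: semicontinuity of $\gr$, i.e.\ excluding boundary points of $\tilde Z$ whose filtered components are larger than the generic ones. But your resolution of that crux does not work. The reduction to simple limits is circular: Propositions \ref{tensorprod} and \ref{size}, and the fact that every limit is an iteration of simple ones, are statements about the \emph{classical} closure $Z$. What you must control is an arbitrary point of the \emph{quantum} closure $\tilde Z$, i.e.\ a limit of $\mA_{\chi_t}$ along an arbitrary curve (or net) in $\fh^{reg}$, which need not be of the form $C_0+t\,\diag(C_1,\ldots,C_l)$ nor any iteration of such degenerations. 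Tracking generators along those special curves produces, over each $X\in\overline{M_{0,n+1}}$, the good point $\mA_X\in\tilde Z$ with $\gr\mA_X=\overline{\mA_X}$; it does not exclude \emph{additional} points of $\tilde Z$ with jumped Poincar\'e series, and excluding them is precisely the content of the theorem (the paper stresses just before the statement that Theorem \ref{t2} by itself cannot give this). No properness or graph-closure argument repairs this, because bijectivity of the graph closure over $\tilde Z_i$ is equivalent to the very no-jump statement in question.

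There is a second, smaller gap even along a fixed simple limit: your generator argument bounds the limit subalgebra only from \emph{below}. The limits of the rescaled $\sym(\partial^{c_i}_{\chi}P_i)$ generate a subalgebra \emph{contained} in the limit, of at most the generic Poincar\'e series; the jump phenomenon is an upper-bound question, and neither commutativity nor Theorem \ref{t2} rules out that the limit is strictly bigger (Theorem \ref{t2} concerns liftings of $\overline{\mA_X}$, i.e.\ algebras whose $\gr$ already has the generic Poincar\'e series---which is what is to be proven). The missing ingredient, and what the paper actually uses, is Tarasov's theorem (\cite[Theorem 4]{taras2}, quoted as the Lemma inside the proof): for every $X\in\overline{M_{0,n+1}}$ the subspace $\bigoplus_{m\le k}\overline{\mA_X}^{(m)}$ is a \emph{maximal} commutative subspace of the $k$-th filtered component of $S(\fgl_n)$---maximality degree by degree, not merely as a subalgebra. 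Since the associated graded of any commutative limit is Poisson-commutative and contains (inductively in $k$) such an $\overline{\mA_X}$ in each filtered component, this maximality forces equality in every filtered degree, i.e.\ no jump, uniformly over all of $\tilde Z$ and with no reduction to special one-parameter families. Replacing your simple-limit/generator analysis by this degreewise maximality lemma is what makes the argument close.
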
 
\begin{proof}
Let $\tilde Z$ be the closure of the family $U(\fgl_n)$. It is enough to prove that the size of a limit filtered subalgebra is not bigger than that of a generic subalgebra from this family: this implies that the associated graded of any limit subalgebra is isomorphic to some limit of associated graded subalgebras, so we obtain the proper birational map $\tilde Z_i \to Z_i$ which is bijective from Theorem \ref{t2}, thus $\tilde Z \simeq Z$.
The following Lemma implies that the size of the limit subalgebras does not actually jump.
\begin{lemma}(\cite[Theorem 4]{taras2})
For any $X \in \overline{M_{0,n+1}}$ subspace $\bigoplus_{m=0}^k \overline{\mA_{X}}^{(m)}$ is a maximal commutative subspace of $S(\fgl_n)^{(m)}$.
\end{lemma}
\end{proof}

Using the description of limit subalgebras we can refine the statement about uniqueness of $\mA_{\chi}$ for $\chi \in \fh$. Note that $\overline{\mA_{\chi}}$ belongs to $S(\fgl_n)^{\fz_{\fgl_n}(\chi)}$.
\begin{proposition}
\label{liftt}
The lifting of $\overline{\mA_{\chi}}$ to $U(\fgl_n)^{\fz_{\fgl_n}(\chi)}$ is unique for any $\chi \in \fh$.
\end{proposition}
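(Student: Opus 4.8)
The plan is to deduce uniqueness from the already-established uniqueness of the lifting $\mA_X$ of a limit subalgebra (Theorem \ref{t2}), by realizing $\overline{\mA_\chi}$ as the $\fz$-invariant tensor factor of such a limit subalgebra. Write $\fz = \fz_{\fgl_n}(\chi) = \fgl_{k_1}\oplus\ldots\oplus\fgl_{k_l}$ for the block-diagonal Levi determined by the eigenvalue multiplicities $k_1,\ldots,k_l$ of $\chi$. Existence of a lifting inside $U(\fgl_n)^{\fz}$ is clear, since it is precisely the factor $\overline{\mA_{C_0}}$ (with $C_0=\chi$) of the limit subalgebra of Proposition \ref{size}, lifted via Theorem \ref{t2}; so the whole content is uniqueness. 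I would fix once and for all regular elements $\chi_i$ in the Cartan subalgebra of $\fgl_{k_i}$, together with the corresponding unique regular liftings $\mA_{\chi_i}\subset U(\fgl_{k_i})$, and let $X\in\overline{M_{0,n+1}}$ be the point of Proposition \ref{size} for which $\overline{\mA_X}=\overline{\mA_\chi}\otimes_{Z(S(\fz))}(\overline{\mA_{\chi_1}}\otimes\ldots\otimes\overline{\mA_{\chi_l}})$.

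First I would show that every lifting $\mA$ of $\overline{\mA_\chi}$ in $U(\fgl_n)^{\fz}$ automatically contains $Z(U(\fz))$. Indeed, $Z(U(\fz))$ is central in $U(\fgl_n)^{\fz}$, so for a central element $z$ the subalgebra $\langle \mA, z\rangle$ is again commutative; its symbol $\gr\langle \mA,z\rangle$ is Poisson-commutative in $S(\fgl_n)^{\fz}$ and contains $\overline{\mA_\chi}$. Since $\overline{\mA_\chi}$ is maximal Poisson-commutative in $S(\fgl_n)^{\fz}$ (the fact cited in the proof of Proposition \ref{quant}), this forces $\gr\langle\mA,z\rangle=\overline{\mA_\chi}=\gr\mA$, hence $\langle\mA,z\rangle=\mA$ and $z\in\mA$.

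Next, given any such lifting $\mA$, I would form the subalgebra $\mA'\subset U(\fgl_n)$ generated by $\mA$ and all the $\mA_{\chi_i}$. It is commutative: elements of $\mA\subset U(\fgl_n)^{\fz}$ commute with $\fgl_{k_i}$, hence with all of $U(\fgl_{k_i})\supset\mA_{\chi_i}$; the factors $\mA_{\chi_i}$ and $\mA_{\chi_j}$ commute because they lie in commuting blocks; and each factor is internally commutative. Consequently $\gr\mA'$ is Poisson-commutative and, by construction, contains $\gr\mA\cdot\prod_i\gr\mA_{\chi_i}=\overline{\mA_\chi}\otimes_{Z(S(\fz))}\bigotimes_i\overline{\mA_{\chi_i}}=\overline{\mA_X}$. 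Since $\overline{\mA_X}$ is maximal Poisson-commutative (limit subalgebras are maximal commutative), I get $\gr\mA'=\overline{\mA_X}$, i.e. $\mA'$ is a lifting of $\overline{\mA_X}$. By the uniqueness part of Theorem \ref{t2} this yields $\mA'=\mA_X$, independently of the chosen $\mA$.

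Finally I would recover $\mA$ from $\mA_X$ by taking $\fz$-invariants. Because $\mA\supseteq Z(U(\fz))$, the product structure reads $\mA'=\mA\otimes_{Z(U(\fz))}\bigotimes_i\mA_{\chi_i}$, and $\fz$ acts on it only through the adjoint action on the factors $\mA_{\chi_i}\subset U(\fgl_{k_i})$. Passing to invariants and using that a regular shift subalgebra meets the invariants in just the center, $\mA_{\chi_i}^{\fgl_{k_i}}=Z(U(\fgl_{k_i}))$, I obtain $(\mA_X)^{\fz}=(\mA')^{\fz}=\mA$; as $\mA_X$ is independent of $\mA$, uniqueness follows. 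The main obstacle is the verification that $\mA'$ is genuinely a lifting of $\overline{\mA_X}$ — that adjoining the regular factors neither destroys commutativity nor enlarges the associated graded — together with the clean extraction $(\mA_X)^{\fz}=\mA$, which hinges on $\overline{\mA_{\chi_i}}^{\fgl_{k_i}}=Z(S(\fgl_{k_i}))$ for regular $\chi_i$.
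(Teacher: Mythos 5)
Your proposal is correct and follows essentially the same route as the paper: both pass to the limit point $X \in \overline{M_{0,n+1}}$ obtained by adjoining regular shift-of-argument data on the blocks of $\fz_{\fgl_n}(\chi)$, and both conclude via the uniqueness of the lifting of $\overline{\mA_X}$ from Theorem~\ref{t2}. The paper's proof is a two-line version of yours; your extra steps (every lifting contains $Z(U(\fz))$, commutativity and correct associated graded of $\mA'$, and the extraction $\mA = (\mA_X)^{\fz}$) are precisely the details needed to make rigorous the paper's assertion that two liftings of $\overline{\mA_{\chi}}$ would yield two liftings of $\overline{\mA_X}$.
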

\begin{proof}
Let us fix some regular semisimple $\chi_1 \in \fz_{\fgl_n}(\chi)$ such that $\chi + t \cdot \chi_1$ is regular for small $t \not = 0$. Consider the curve $X \in \overline{M_{0,n+1}}$  corresponding to limit $t \to 0$.
The subalgebra corresponding to this curve is
$$\mA_{\chi} \otimes_{ZU(\fz_{\fgl_n}(\chi))} \mA_{\chi_1}.$$
If there are more than one lifting of $\overline{\mA_{\chi}}$ to $U(\fgl_n)^{\fz_{\fgl_n}(\chi)}$ then we have at least two liftings of subalgebra $\mA_X$ which is in contradiction with Theorem \ref{t2}.
\end{proof}




\subsection{Limits of Bethe subalgebras}
\label{limits}

One can describe the closure of the family of Bethe subalgebras parameterized by $C \in T^{reg}$ similarly to that for shift of argument subalgebras.
Bethe subalgebras do not change under transformations of the form $C \to a C, a \in \bc^{\times}$. So the parameter space for $B(C), C \in T^{reg}$ can be regraded as $M_{0,n+2}$, i.e a point $C = \diag(z_1, \ldots, z_n)$ corresponds to $\bc\bp^1$ with an ordered set of marked points $0, z_1, \ldots, z_n, \infty$. Let $\overline{M_{0,n+2}}$ be the Deligne-Mumford compactification, see Section \ref{compact}.  

\begin{theorem} (\cite[Theorem 5.1]{ir2})
\begin{enumerate}
\item The closure of the family of Bethe subalgebras corresponding to $C \in T^{reg}$ is parameterized by $\overline{M_{0,n+2}}$.
\item Limit  subalgebras are free, maximal commutative and of the same size as $B(C)$ with $C \in T^{reg}$.
\end{enumerate}
\end{theorem}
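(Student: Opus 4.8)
The plan is to adapt the proof of Theorem \ref{clo}, constructing the closure of the family $\{B(C)\mid C\in T^{reg}\}$ inside a product of Grassmannians and then identifying the resulting parameter variety with $\overline{M_{0,n+2}}$.

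First I would fix the filtration $\deg t_{ij}^{(r)}=r$ on $Y(\fgl_n)$ and, writing $F^{(N)}$ for its $N$-th component, consider $B(C)\cap F^{(N)}$. By Theorem \ref{betheprop}(3) the subalgebra $B(C)$ is freely generated by the coefficients of $\tau_p(u,C)$ for $C\in T^{reg}$, so $d(N):=\dim\bigl(B(C)\cap F^{(N)}\bigr)$ is independent of $C\in T^{reg}$. This yields regular maps $\theta_N\colon T^{reg}\to\mathrm{Gr}\bigl(d(N),\dim F^{(N)}\bigr)$, and since $B(C)=B(aC)$ for $a\in\bc^\times$ while the configuration $(0,z_1,\dots,z_n,\infty)$ attached to $C=\diag(z_1,\dots,z_n)$ is determined up to exactly this scaling, the maps factor through $M_{0,n+2}$. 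Taking closures $\tilde Z_N=\overline{\theta_N(T^{reg})}$ and the inverse limit $\tilde Z=\varprojlim \tilde Z_N$ produces the compactification of the family together with a dense embedding $M_{0,n+2}\hookrightarrow\tilde Z$.

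Next I would describe the limit subalgebras attached to degenerate curves and build a morphism $\overline{M_{0,n+2}}\to\tilde Z$. By the stratification recalled in Section \ref{compact}, every boundary point is reached by iterating codimension-one degenerations, of which there are two kinds: collisions among $z_1,\dots,z_n$ (strata of the second type) and collisions of some $z_i$ with $0$ or $\infty$ (strata of the first type). In each case one takes the limit of $C(t)=C_0+t\,\diag(C_1,\dots,C_l)$ exactly as in Proposition \ref{size}; using the multiplicativity of the series $\tau_p(u,C)$ together with the commuting sub-Yangian embeddings $i_k$ and $\psi_k$ of Proposition \ref{commuteyang}, I expect $\lim_{t\to 0}B(C(t))$ to decompose as a Bethe subalgebra associated to the collision pattern tensored, over the relevant centers, with Bethe subalgebras of the smaller Yangians $Y(\fgl_{k_i})$ -- the Yangian counterpart of Propositions \ref{tensorprod} and \ref{size}. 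This shows the limits exist and depend only on the point of $\overline{M_{0,n+2}}$; surjectivity onto $\tilde Z$ follows because every limit is an iteration of such simple degenerations, and injectivity follows from the distinctness of the tensor factors for distinct trees.

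The heart of the matter, and the step I expect to be hardest, is part (2): proving that the limit subalgebras are free of the \emph{same} size $d(N)$, i.e. that the Grassmannian dimension does not jump in the limit, for this is precisely what forces $\tilde Z$ to be $\overline{M_{0,n+2}}$ with no extra identifications or components. Following the pattern of Theorem \ref{clo}, I would establish a Yangian analogue of the Tarasov-type maximality lemma bounding the size of a limit subalgebra from above by $d(N)$. The natural tools are the passage to the commutative associated graded $\gr Y(\fgl_n)$ for the filtration above, where the degeneration becomes a classical one governed by shift-of-argument/Gaudin degenerations already understood, supplemented by Proposition \ref{quant} and the centralizer construction of Theorem \ref{qasym} to transfer size bounds from the $U(\fgl_{n+k})$ shift-of-argument family. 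The lower bound $d(N)$ comes from the explicit tensor decomposition of the previous paragraph, and squeezing gives equality. Once no jump occurs, taking associated graded realizes $\tilde Z$ as a proper birational bijective cover of the closure $Z$ of the classical Bethe family, the same degeneration analysis carried out in $\gr Y(\fgl_n)$ identifies $Z$ with $\overline{M_{0,n+2}}$, and bijectivity follows from uniqueness of the lifting (the Yangian analogue of Theorem \ref{t2}); hence $\tilde Z\cong\overline{M_{0,n+2}}$, with freeness and maximal commutativity of the limits read off from the tensor structure.
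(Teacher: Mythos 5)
A preliminary remark: the paper you were asked about does not actually prove this statement — it is quoted from \cite[Theorem 5.1]{ir2}, and what the paper reproduces of the underlying argument are the explicit descriptions of the limit subalgebras (Theorems \ref{result1} and \ref{result2}) and the centralizer-construction statements (Propositions \ref{bethecenter} and \ref{isomor}). Measured against that argument, your general framework — closures in Grassmannians of filtered components, factoring through the scaling $B(C)=B(aC)$, computing limits along the two kinds of codimension-one degenerations and iterating, and transferring size/maximality bounds through Olshanski's centralizer construction to the shift-of-argument family in $U(\fgl_{n+k})^{\fgl_k}$, where Tarasov-type results are available — is broadly the right one and close to what \cite{ir2} does.

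There is, however, a genuine error at the central step: your description of the limits along strata of the second type. When the eigenvalues of $C$ collide at nonzero points (the additive degeneration $C(t)=C_0+t\,\diag(C_1,\dots,C_l)$ of Proposition \ref{size}), the limit is \emph{not} built from Bethe subalgebras of the smaller Yangians $Y(\fgl_{k_i})$: by Theorem \ref{result2}(2) it equals $B(C_0)\otimes_{\bigotimes_{i}Z(U(\fgl_{k_i}))}\bigotimes_{i}\mA_{C_i}$, where each $\mA_{C_i}$ is a \emph{shift-of-argument} subalgebra of $U(\fgl_{k_i})\subset U(\fgl_n)\subset Y(\fgl_n)$. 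Bethe subalgebras of smaller Yangians occur only in limits of the first type, $i_k(B(C_0))\otimes\psi_{n-k}(B(C_1))$, and those arise from the multiplicative degeneration $\diag(C_0,0,\dots,0)+t\,\diag(0,\dots,0,C_1)$, not from the additive one you write for both cases. This is not a cosmetic slip. A Bethe subalgebra of $Y(\fgl_{k_i})$ varies over $\overline{M_{0,k_i+2}}$ (two distinguished points $0,\infty$), whereas $\mA_{C_i}$ varies over $\overline{M_{0,k_i+1}}$, and only the latter matches the moduli of a component $X_\alpha$ attached to $X_\infty$ at a point $\alpha\neq 0$, which carries $k_\alpha$ marked points plus one node. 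With your version of the limits, the boundary of your $\tilde Z$ would be fibred into products of $\overline{M_{0,k_i+2}}$'s and could not be identified with $\overline{M_{0,n+2}}$, so both your injectivity claim and part (1) of the theorem collapse; you also lose precisely the feature (appearance of shift-of-argument subalgebras in the limits) that the rest of the present paper relies on, e.g.\ in Theorem \ref{result1} and Propositions \ref{eval} and \ref{isomor}. Relatedly, your final appeal to ``uniqueness of lifting (the Yangian analogue of Theorem \ref{t2})'' invokes a result that does not exist in the literature; in \cite{ir2} the upper bound on a limit subalgebra is obtained instead from maximal commutativity of the explicitly described limit (proved by induction on the number of components of the curve, from known maximality of the Bethe and shift-of-argument factors), together with the classical statements for the centralizer family, which are established in \cite[Section 4]{ir2} before, and independently of, the Yangian theorem.
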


\begin{remark}
\emph{In Section 3 we discuss Losev-Manin $\overline{L_{n+2}}$ space such that $\overline{M_{0,n+2}}$ is its blow-up. In \cite{ir3} the family of Bethe subalgebras $B(C)$ is extended to the De Concini-Procesi wonderful closure $\overline{PGL_n}$ of the adjoint group $PGL_n$. The Losev-Manin space $\overline{L_{0,n+2}}$ is the closure of the torus $T$ inside $\overline{PGL_n}$ and its boundary points correspond to the first type of limits below. This means that one can construct limit subalgebras in two steps, firstly extend to parameter space to the closure of group and then blow up subspaces where corresponding subalgebras have smaller size to obtain a flat family of subalgebras parameterized be a complete variety.}
\end{remark}

In \cite{ir2} we describe explicitly the limit subalgebra corresponding to a point $X \in \overline{M_{0,n+2}}$. We will extensively use this description so we reproduce it here.  
Let $X_\infty$ be the irreducible component of $X\in\overline{M_{0,n+2}}$ containing the marked point $\infty$. We identify $X_\infty$ with the standard $\mathbb{CP}^1$ in such a way that the marked point $\infty$ is $\infty$ and the point where the curve containing the marked point $0$ touches $X_\infty$ is $0$. To any distinguished point $\alpha\in X_\infty$ we assign the number $k_\alpha$ of nonzero marked points on the maximal (possibly reducible) curve $X_\alpha$ attached to $X_\infty$ at $\alpha$ (we set $k_\alpha=1$ if $X_\alpha$ is a (automatically, marked) point). Let $C$ be the diagonal $(n-k_0)\times(n-k_0)$-matrix with the eigenvalues $\alpha$ of multiplicity $k_\alpha$ for all distinguished points $0\ne\alpha\in X_\infty$. Then the subalgebra $i_{k_0}(B(C))$ centralized by the Lie subalgebra $\bigoplus\limits_{\alpha\ne0}\fgl_{k_\alpha}$ in $\fgl_{n-k_0}\subset i_{k_0}(Y(\fgl_{n-k_0}))\subset Y(\fgl_n)$ and by the complement sub-Yangian $\psi_{n-k_0}(Y(\fgl_{k_0}))$.
\begin{theorem}(\cite[Theorem 5.2]{ir2})

\label{result1} \begin{enumerate} \item The limit Bethe subalgebra corresponding to the curve $X\in\overline{M_{0,n+2}}$ is the product of the following $3$ commuting subalgebras: first, $i_{k_0}(B(C))\subset i_{k_0}(Y(\fgl_{n-k_0}))\subset Y(\fgl_n)$, second, the subalgebra corresponding to $X_0$ in the complement sub-Yangian $\psi_{n-k_0}(Y(\fgl_{k_0}))\subset Y(\fgl_n)$ and third, the limit shift of argument subalgebras $\mA_{X_\alpha}$ in $U(\fgl_{k_\alpha})\subset i_{k_0}(Y(\fgl_{n-k_0}))\subset Y(\fgl_n)$ for all distinguished points $\alpha\ne0$ (again we define the point $\infty$ on each $X_\alpha$ just as the intersection with $X_\infty$).
\item $i_{k_0}(B(C))$ contains the center of every $U(\fgl_{k_\alpha})\subset i_{k_0}(Y(\fgl_{n-k_0}))$. The above product is in fact the tensor product $$\psi_{n-k_0}(B(X_0))\otimes_{\bc}i_{k_0}(B(C))\otimes_{ZU(\bigoplus_{\alpha\ne0} \fgl_{k_\alpha})}\bigotimes_{\alpha\ne0}\mA_{X_\alpha}.$$
\end{enumerate}
\end{theorem}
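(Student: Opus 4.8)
The plan is to reduce an arbitrary limit to a sequence of simple (codimension-one) degenerations, in exact parallel with the shift-of-argument case of Propositions \ref{tensorprod} and \ref{size}, and to treat the two types of codimension-one degeneration separately before iterating. The starting point is the stratification of $\overline{M_{0,n+2}}$ from Section \ref{compact}: every boundary point is reached by successively approaching generic points of codimension-one strata, and the preceding theorem (\cite[Theorem 5.1]{ir2}) already guarantees that the family is flat, i.e. all limit subalgebras are free, maximal commutative and of the same size as a generic $B(C)$. Hence it suffices to identify the limit subalgebra for each type of codimension-one stratum and to check that iterating these identifications reproduces the claimed structure; the size statement then forces any product of commuting factors we exhibit inside the limit to fill it out, so that proving a containment of the correct size is enough.

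For a degeneration of the first type, where $k_0$ marked points collide with $0$ so that $0$ and $\infty$ split onto different components, I would set $C(t)=\diag(c_1,\ldots,c_{n-k_0},tb_1,\ldots,tb_{k_0})$ and study the leading asymptotics of the generating series $\tau_p(u,C(t))$ as $t\to0$. Sorting each term of the antisymmetrized trace by the number of small eigenvalues it carries, the leading contributions factor through the two commuting sub-Yangians $i_{k_0}(Y(\fgl_{n-k_0}))$ and $\psi_{n-k_0}(Y(\fgl_{k_0}))$ supplied by Proposition \ref{commuteyang}. The surviving block yields the factor $i_{k_0}(B(C))$, while the small block yields the recursively defined limit subalgebra attached to $X_0$ inside $\psi_{n-k_0}(Y(\fgl_{k_0}))$; the asymmetry between $0$ and $\infty$ is exactly what selects the twisted embedding $\psi_{n-k_0}$ (which involves the inversion automorphism $\omega$) rather than the plain $\varphi_{n-k_0}$ for the $0$-block.

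For a degeneration of the second type, where $k_\alpha$ marked points collide at a common value $\alpha\ne0$ on $X_\infty$, the relevant local object is the centralizer subalgebra $U(\fgl_{k_\alpha})\subset Y(\fgl_n)$, and I would invoke Proposition \ref{quant}, which identifies $\ev_z(B(C))$ with the shift-of-argument subalgebra $\mA_{C^{-1}}$. This lets me show that the corresponding block of the degenerating Bethe subalgebra converges to the limit shift-of-argument subalgebra $\mA_{X_\alpha}$, whose structure is governed by Theorem \ref{t2} and Propositions \ref{tensorprod}, \ref{size}. Assembling the three families of factors, I would then pin down the centers over which the tensor product of part (2) is taken: by Proposition \ref{tensor} the coefficients of $\qdet T(u)$ generate the center of $Y(\fgl_n)$ and account for the $\psi_{n-k_0}(B(X_0))$ factor, while a direct check shows that $i_{k_0}(B(C))$ contains the center $ZU(\bigoplus_{\alpha\ne0}\fgl_{k_\alpha})$ shared with the shift-of-argument factors.

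I expect the main obstacle to be the precise asymptotic analysis underlying the last two steps: controlling the leading behaviour of $\tau_p(u,C(t))$ as eigenvalues collide and verifying that the collided block lands exactly on the generators of $\mA_{X_\alpha}$, neither more nor less. The delicate bookkeeping is in tracking the degrees and the shared centers so that the product of the three families has precisely the size predicted by flatness; once the sizes agree, maximality of the limit (again from the preceding theorem) upgrades the containment to equality and closes the argument.
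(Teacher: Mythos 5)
The first thing to note is that the paper contains no proof of this statement to compare against: Theorem~\ref{result1} is imported verbatim from \cite[Theorem 5.2]{ir2}, and the text around it says only that the description is being reproduced for later use. So your proposal can only be measured against the strategy of the cited work and against the analogous arguments this paper does carry out (the proofs of Propositions~\ref{eval} and~\ref{isomor}). At that level your plan is the right one and essentially coincides with theirs: reduce to the two types of codimension-one degenerations of Theorem~\ref{result2}, identify each simple limit through the asymptotics of $\tau_p(u,C(t))$, iterate using the stratification of $\overline{M_{0,n+2}}$, and use flatness to upgrade a containment of subalgebras of equal size to an equality. Your remark that the asymmetry between $0$ and $\infty$ forces the twisted embedding $\psi_{n-k_0}$ rather than $\varphi_{n-k_0}$ is also correct and is a genuine subtlety.

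However, as a proof the proposal has a concrete gap beyond deferred bookkeeping, namely in the second type of degeneration. Proposition~\ref{quant} computes the \emph{image} $\ev_z(B(C))=\mA_{C^{-1}}$ under the evaluation homomorphism, and knowledge of the images $\ev_z(B(C(t)))$ only bounds the limit from the outside: the image of a limit of subspaces is contained in the limit of the images. It cannot show that $\lim_{t\to 0}B(C(t))\subset Y(\fgl_n)$ \emph{contains} the subalgebra $\mA_{C_i}$ sitting inside the copy of $U(\fgl_{k_i})$ generated by the $t^{(1)}_{ab}$, which is an inner statement about elements of the Yangian. To get it one must exhibit explicit elements — rescaled $t$-derivatives of the coefficients of $\tau_p(u,C(t))$, or the quantum-minor combinations of the kind written out in the proof of Proposition~\ref{eval} — and identify their leading symbols with the derivatives $\partial_{\chi}$ of the coefficients of the characteristic polynomial; this identification is the technical heart of the theorem and is exactly what you defer as an ``obstacle''. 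Two further cautions: the step ``a containment of the correct size suffices'' needs the Poincar\'e series of the product $\psi_{n-k_0}(B(X_0))\otimes_{\bc}i_{k_0}(B(C))\otimes_{ZU(\bigoplus_{\alpha\ne0}\fgl_{k_\alpha})}\bigotimes_{\alpha\ne0}\mA_{X_\alpha}$, which presupposes the tensor-product-over-centers structure of part~(2) — so parts~(1) and~(2) must be established simultaneously, not sequentially as your plan suggests; and if the argument is meant to be independent of \cite{ir2}, you should verify that the flatness theorem you lean on is not itself proved there via the description of the simple limits, since otherwise the appeal to it is circular.
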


Again as in the case of shift of argument subalgebras there are simple limits such that any limit can be obtained by its iteration.

\begin{theorem}(\cite[Theorem 5.3]{ir2})
\label{result2}
1) Let $C_0=\diag(a_1, \ldots, a_{n-k})$ and $C_1=\diag(b_1, \ldots, b_k)$.  Suppose that
$C(t) = \diag(C_0, \underbrace {0, \ldots, 0}_k)
+ t\cdot\diag(\underbrace{0, \ldots, 0}_{n-k}, C_1) \in T^{reg}$ (particulary both $C_0$ and $C_1$ are regular and non-degenerate). Then
$$\lim_{t \to 0} B(C(t)) = i_{k}(B(C_0)) \otimes \psi_{n-k}(B(C_1)).$$ \\
2) Let $C_0=\diag(\underbrace{a_1, \ldots, a_1}_{k_1}, \ldots, \underbrace{a_l, \ldots, a_l}_{k_l})$ be a non-degenerate matrix and $C_i=\diag(\underbrace{b_{i,1}, \ldots, b_{i,k_i}}_{k_i})$ for $i=1,\ldots,l$ such that $a_r\ne a_s$ and $b_{i,r}\ne b_{i,s}$ for $r\ne s$. Let
\begin{eqnarray*}C(t) := C_0 + t \cdot \diag(C_1, \ldots, C_l).\end{eqnarray*}
Then
$$\lim_{t \to 0} B(C(t)) = B(C_0) \otimes_{\bigotimes\limits_{i=1}^lZ(U(\fgl_{k_i}))} \bigotimes\limits_{i=1}^l \mA_{C_i},$$
where $\mA_{C_i}$ is the shift of argument subalgebra in $U(\fgl_{k_i}) \subset U(\fgl_n) \subset Y(\fgl_n)$ (the copy of $U(\fgl_n)$ in the Yangian $Y(\fgl_n)$ is generated by $t_{ij}^{(1)}$).
\end{theorem}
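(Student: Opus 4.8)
The plan is to reduce everything to an explicit computation of the leading terms of the generating series $\tau_p(u,C(t))$ as $t\to 0$. The starting point is the standard rewriting of the Bethe generators through principal quantum minors: since $C$ is diagonal, the antisymmetrizer $A_p$ identifies $C_1\cdots C_p$ with $\bw{p}C$ acting on $\bw{p}\bC$, so that $\tau_p(u,C)=\sum_{a_1<\cdots<a_p} c_{a_1}\cdots c_{a_p}\, t^{a_1\ldots a_p}_{a_1\ldots a_p}(u)$, where the $c_i$ are the diagonal entries of $C$ (see \cite{M}). By the closure theorem \cite[Theorem 5.1]{ir2} the limit $\lim_{t\to0}B(C(t))$ exists, is free and maximal commutative of the same size as a generic $B(C)$. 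Hence in each case it suffices to prove a single inclusion together with a size comparison: that every generator of the claimed tensor product arises as a $t\to 0$ limit of (rescaled) elements of $B(C(t))$, and that the product has the correct size. For part~1 the size and maximality of the product follow from the fact that $i_k\otimes\psi_{n-k}$ is an embedding of $Y(\fgl_{n-k})\otimes Y(\fgl_k)$ (Proposition \ref{commuteyang}) together with the maximality in Theorem \ref{betheprop}; for part~2 they follow from the classical picture of Proposition \ref{size} plus the uniqueness of the lift.

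For part~1 I would substitute $c_i=a_i$ for $i\le n-k$ and $c_{n-k+j}=tb_j$ into the master formula and expand in $t$, grouping the index subsets $S$ by the number $m=|S\cap\{n-k+1,\ldots,n\}|$ of ``bottom'' indices. The lowest order $m=0$ collects exactly the principal minors with all indices in $\{1,\ldots,n-k\}$, and since $i_k\colon t^{(r)}_{ij}\mapsto t^{(r)}_{ij}$ carries the quantum minors of $Y(\fgl_{n-k})$ precisely to these top-block minors, one gets $\lim_{t\to0}\tau_p(u,C(t))=i_k(\tau_p(u,C_0))$ for $p\le n-k$, generating $i_k(B(C_0))$. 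The bottom-block factor is more delicate: the subsets lying in $\{n-k+1,\ldots,n\}$ occur at the top order in $t$, but the naive bottom-block minors are the $\varphi_{n-k}$-images, which do \emph{not} commute with $i_k(B(C_0))$, so after rescaling by powers of $t$ they cannot be the correct limit generators on the nose. The resolution is that the eigenvalues $tb_j$ tend to $0$, i.e. $k$ marked points collide at the point $0$ of the curve, which by the structure theorem \cite[Theorem 5.2]{ir2} (Theorem \ref{result1}) is governed by the component through $0$ and hence by the $\omega$-twisted embedding $\psi_{n-k}$. Concretely I would pass to the quantum comatrix description of the generators (equivalently apply $\omega_n\colon T(u)\mapsto T(-u-n)^{-1}$), which is regular in the small-eigenvalue limit and turns the surviving bottom-block combinations into $\psi_{n-k}(\tau_p(u,C_1))$; these commute with the top block by Proposition \ref{commuteyang} and generate $\psi_{n-k}(B(C_1))$.

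For part~2 I would expand $\tau_p(u,C_0+t\,\diag(C_1,\ldots,C_l))$ in $t$. The order-$t^0$ term is $\tau_p(u,C_0)$, so $B(C_0)$ is contained in the limit. The new generators come from the subleading terms, which I would analyze block by block: within the block of the repeated eigenvalue $a_i$ of multiplicity $k_i$, differentiating $\prod_{r\in S}c_r(t)$ in $t$ reproduces, schematically, the shift-of-argument generators $\partial^{c}_{C_i}P_j$ inside the copy $U(\fgl_{k_i})\subset U(\fgl_n)\subset Y(\fgl_n)$ generated by the $t^{(1)}_{ij}$. Rather than match these series term by term, I would pass to the associated graded: the classical counterpart of the statement is exactly the description of simple limits of shift-of-argument subalgebras in $S(\fgl_n)$ (Proposition \ref{size}), while the relation $\ev_z(B(C))=\mA_{C^{-1}}$ (Proposition \ref{quant}) identifies the block contributions with $\overline{\mA_{C_i}}$. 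Having matched the associated graded subalgebras, and observed that both sides centralize $\bigoplus_i\fgl_{k_i}$ and contain $\bigotimes_i ZU(\fgl_{k_i})$, the uniqueness of the lift (Theorem \ref{t2} together with Proposition \ref{liftt}) upgrades the equality of gradeds to the stated equality $B(C_0)\otimes_{\bigotimes_i ZU(\fgl_{k_i})}\bigotimes_i\mA_{C_i}$.

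I expect the main obstacle to be the bottom-block analysis in part~1: identifying the correct limit generators requires the $\omega$-twist, and one must verify both that the $\psi_{n-k}$-image really is what survives (not the $\varphi_{n-k}$-image, which would fail to commute with $i_k(B(C_0))$) and that no intermediate orders in $t$ contribute extra surviving generators. The bookkeeping of which rescaled combinations of the coefficients of $\tau_p(u,C(t))$ converge, and to exactly the right number of independent generators, is where the flatness and size statements of the closure theorem do the essential work: they guarantee that once the inclusion and the centralizer/commutativity constraints are in place, a dimension count forces equality, so one never has to exhibit the full limit basis by hand.
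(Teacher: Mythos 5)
First, a structural remark: this paper never proves the statement at all; Theorem~\ref{result2} is imported verbatim from \cite[Theorem 5.3]{ir2}, so your attempt can only be compared with the proof in that source and judged on internal correctness. For part~1 your mechanism is essentially the right one. For $p\le n-k$ the $t^0$-coefficient of $\tau_p(u,C(t))$ is $i_k(\tau_p(u,C_0))$; for $p=n-k+m$ the lowest-order coefficient (order $t^m$) is $\prod_i a_i\cdot\sum_{|S'|=m}\prod_{j\in S'}b_j\,t^{1,\ldots,n-k,S'}_{1,\ldots,n-k,S'}(u)$, and multiplying by the inverse of the series $t^{1\ldots n-k}_{1\ldots n-k}(u+n-k)$ (whose coefficients are already in the limit) yields precisely the generators of $\psi_{n-k}(B(C_1))$ --- this is the identification from the proof of Theorem 6.5 of \cite{ir3}, reproduced in this paper inside the proof of Proposition~\ref{eval}. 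Two caveats, one of which is a real error: your claim that maximality of the product follows from Proposition~\ref{commuteyang} plus Theorem~\ref{betheprop} is false as stated, since maximal commutativity of $B(C_0)\otimes B(C_1)$ inside the proper subalgebra $i_k\otimes\psi_{n-k}\bigl(Y(\fgl_{n-k})\otimes Y(\fgl_k)\bigr)$ says nothing about maximality in the ambient $Y(\fgl_n)$. What closes the argument is the Poincar\'e series comparison, and there you should use only the automatic fact that a Grassmannian limit of subspaces has the generic dimensions in each filtered degree (plus strict filtration-preservation of $i_k$ and $\psi_{n-k}$); invoking the hard assertions of \cite[Theorem 5.1]{ir2} (maximality, freeness, parameterization) would be circular relative to the source, where they rest on the very description you are proving.

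In part~2 there is a genuine gap: the heart of the matter is the inclusion $\mA_{C_i}\subset\lim_{t\to0}B(C(t))$, and your argument never establishes it. The shortcut you propose does not parse for three reasons. First, the associated graded of $B(C(t))\subset Y(\fgl_n)$ (with respect to the Yangian filtration) lives in $\gr Y(\fgl_n)$, not in $S(\fgl_n)$, so Proposition~\ref{size} is not ``the classical counterpart'' of the statement being proved. Second, Proposition~\ref{quant} controls only the image $\ev(B(C))$ under a surjection; knowing the image of the limit under $\ev$ cannot tell you which subalgebra of $Y(\fgl_n)$ the limit is. Third, the uniqueness-of-lifting results (Theorem~\ref{t2}, Proposition~\ref{liftt}) concern commutative subalgebras of $U(\fgl_m)$ lifting Poisson subalgebras of $S(\fgl_m)$ under the PBW filtration; there is no such uniqueness statement for subalgebras of $Y(\fgl_n)$ lifting subalgebras of $\gr Y(\fgl_n)$, so ``upgrading the equality of gradeds'' is not an available move. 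Note also that the naive derivative $t^{-1}\bigl(\tau_p(u,C(t))-\tau_p(u,C_0)\bigr)$ is not a curve inside $B(C(t))$, because $\tau_p(u,C_0)\notin B(C(t))$; what is required is the quantum analogue of Shuvalov's degeneration argument: since $C_0$ is not regular, the limiting generators $\tau_p^{(r)}(C_0)$ satisfy nontrivial relations, and one must exhibit explicit combinations of elements of $B(C(t))$ whose leading terms cancel, divide by $t$, and identify the resulting limits with generators of $\mA_{C_i}$ inside the $t^{(1)}_{ij}$-copy of $U(\fgl_{k_i})$ (a size count then rules out anything extra). That computation --- or an equivalent structural substitute, e.g.\ first locating the new limit elements in the centralizer of $B(C_0)$ and only then reducing to the $U(\fgl_{k_i})$-picture where Tarasov's uniqueness genuinely applies --- is the core of the proof and is missing from your proposal.
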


The last Theorem allows us to describe explicitly the image of $B(X), X \in \overline{M_{0,n+2}}$ under the evaluation homomorphism.



\begin{proposition}
\label{eval}
For any $X \in \overline{M_{0,n+2}}$ we have
$$\ev(B(X)) = \mA_{\sigma_1(X)},$$
where $\sigma_1$ is the involution (\ref{inv}) on $\overline{M_{0,n+2}}$ .

\end{proposition}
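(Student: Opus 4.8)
The plan is to show that the two assignments $X\mapsto \ev(B(X))$ and $X\mapsto \mA_{\sigma_1(X)}$ coincide as flat families of subalgebras over $\overline{M_{0,n+2}}$. Here I read a point $X\in\overline{M_{0,n+2}}$ with marked points $(0,z_1,\dots,z_n,\infty)$ as a shift-of-argument parameter by first applying $\sigma_1$ (the inversion swapping the marked points $0$ and $\infty$) and then forgetting the marked point $0$; generically this sends $C=\diag(z_1,\dots,z_n)$ to $C^{-1}$, so the desired equality over the open stratum $M_{0,n+2}$ is precisely the first bullet of Proposition~\ref{quant}, namely $\ev(B(C))=\mA_{C^{-1}}$.

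To pass from the open stratum to all of $\overline{M_{0,n+2}}$ I would first establish the inclusion $\ev(B(X))\subseteq \mA_{\sigma_1(X)}$ for every $X$. Choose a one-parameter family $C(t)\in T^{reg}$ with $\lim_{t\to0}B(C(t))=B(X)$; since the Bethe family is flat, every element of $B(X)$ is a limit of elements $b(t)\in B(C(t))$, and applying the fixed (filtered) linear map $\ev$ gives $\ev(b)=\lim_t\ev(b(t))$ with $\ev(b(t))\in\mA_{C(t)^{-1}}$. Flatness of the shift family (Theorem~\ref{clo}), together with continuity of $\sigma_1$ and of the forgetful map, identifies $\lim_t\mA_{C(t)^{-1}}=\mA_{\sigma_1(X)}$ as a genuine closed subspace in each filtered degree, so $\ev(b)\in\mA_{\sigma_1(X)}$, proving the inclusion.

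The remaining task is the reverse inclusion, equivalently that the image $\ev(B(X))$ does not drop in size at the boundary; for this I would induct on the codimension of the stratum using the simple limits of Theorem~\ref{result2}. In the second (type~2) case $\lim_{t\to0}B(C(t))=B(C_0)\otimes_{\bigotimes_iZ(U(\fgl_{k_i}))}\bigotimes_i\mA_{C_i}$, applying $\ev$ is immediate: on the copy of $U(\fgl_n)$ generated by the $t_{ij}^{(1)}$ the map $\ev$ is the canonical isomorphism, so $\ev(\mA_{C_i})=\mA_{C_i}$, while $\ev(B(C_0))=\mA_{C_0^{-1}}$ by Proposition~\ref{quant}. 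The resulting algebra $\mA_{C_0^{-1}}\otimes_{\bigotimes_iZ}\bigotimes_i\mA_{C_i}$ is exactly the limit shift-of-argument subalgebra of Proposition~\ref{size} with leading parameter $C_0^{-1}$, and a direct check (inversion acts affinely on each bubble coordinate, and shift subalgebras are invariant under affine reparametrization) shows it equals $\mA_{\sigma_1(X)}$.

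The main obstacle is the first (type~1) case $\lim_{t\to0}B(C(t))=i_{k}(B(C_0))\otimes\psi_{n-k}(B(C_1))$, in which $0$ and $\infty$ lie on different components and $\sigma_1$ relocates the bubble at $0$ to a bubble at $\infty$. For the first factor one has $\ev\circ i_{k}=\iota\circ\ev'$, where $\ev'\colon Y(\fgl_{n-k})\to U(\fgl_{n-k})$ is the evaluation homomorphism of the smaller Yangian and $\iota\colon U(\fgl_{n-k})\hookrightarrow U(\fgl_n)$ is the top-left block embedding, whence $\ev(i_{k}(B(C_0)))=\mA_{C_0^{-1}}$ sits in that block. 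The delicate point is $\ev\circ\psi_{n-k}$: unwinding $\psi_{n-k}=\omega_{n}\circ\varphi_{n-k}\circ\omega_{k}$ and using the interaction of the involution $\omega$ with the evaluation homomorphism, together with $\ev$ applied to the smaller Bethe subalgebra $B(C_1)\subset Y(\fgl_k)$, one must identify $\ev(\psi_{n-k}(B(C_1)))$ with the shift-of-argument subalgebra attached to the $\infty$-bubble of $\sigma_1(X)$, living in the complementary bottom-right block $U(\fgl_k)\subset U(\fgl_n)$. Proposition~\ref{commuteyang} guarantees that the two factors land in commuting complementary blocks, so their product is the tensor product matching the structure of $\mA_{\sigma_1(X)}$ furnished by Proposition~\ref{tensorprod}; combined with the inclusion above, this forces equality of dimensions in every filtered degree and hence $\ev(B(X))=\mA_{\sigma_1(X)}$ for all $X$.
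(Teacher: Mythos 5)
Your skeleton agrees with the paper's: reduce to the two types of simple limits of Theorem~\ref{result2}, handle the type-2 limit and the factor $i_k(B(C_0))$ via Proposition~\ref{quant}, and your preliminary inclusion-via-flatness step is a reasonable (if not strictly necessary) addition. The gap is exactly at the point you yourself call ``delicate'', and it is twofold. First, you never compute $\ev(\psi_{n-k}(B(C_1)))$ --- the sentence ``one must identify\dots'' states the task rather than performs it, and unwinding $\psi_{n-k}=\omega_{n}\circ\varphi_{n-k}\circ\omega_{k}$ is genuinely nontrivial because $\omega_n$ inverts the $T$-matrix, so $\ev\circ\psi_{n-k}$ is not induced by any homomorphism $U(\fgl_k)\to U(\fgl_n)$. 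Second, and more seriously, the target you aim at is wrong: $\ev(\psi_{n-k}(B(C_1)))$ is \emph{not} contained in the bottom-right block $U(\fgl_k)\subset U(\fgl_n)$. Indeed, among the generators of $\psi_{n-k}(B(C_1))$ (written as ratios of quantum minors in the proof of Theorem 6.5 of \cite{ir3}, which is what the paper uses) is, up to an invertible central prefactor, the series $\qdet T(u)$ of $Y(\fgl_n)$ itself; hence the $\ev$-image contains elements of the full center $ZU(\fgl_n)$, and more generally minors $\ev\bigl(t^{1,\ldots,n-k,n-k+a_1}_{1,\ldots,n-k,n-k+a_1}(u)\bigr)$ involving all of $E$ --- none of these lie in any proper block. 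The correct identification, which is the actual content of the paper's proof, is $\ev(\psi_{n-k}(B(C_1)))=\mA_{(0,\ldots,0,C_1^{-1})}$: the shift-of-argument subalgebra with the \emph{degenerate} parameter having $n-k$ zero eigenvalues, a maximal commutative subalgebra of $U(\fgl_n)^{\fgl_{n-k}}$. The paper proves this by applying $\ev$ to the explicit quantum-minor generators and checking that the leading terms are the directional derivatives of the characteristic-polynomial coefficients along $(0,\ldots,0,C_1^{-1})$.

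Consequently your closing step also fails: the two images do not sit in ``commuting complementary blocks'', and the block tensor product you propose is strictly smaller (in every filtered degree) than $\mA_{\sigma_1(X)}$, whose structure by Propositions~\ref{tensorprod} and~\ref{ustr} is $\mA_{C_0^{-1}}\otimes_{ZU(\fgl_{n-k})}\mA_{(0,\ldots,0,C_1^{-1})}$: the bubble at $0$ of $X$, carrying $\psi_{n-k}(B(C_1))$, contributes the \emph{outer} degenerate-parameter factor, while $i_k(B(C_0))$ contributes the block factor $\mA_{C_0^{-1}}\subset U(\fgl_{n-k})$. So ``equality of dimensions in every filtered degree'' cannot be forced from your description; the quantum-minor computation (or an equivalent identification of $\ev\circ\psi_{n-k}$) is the heart of the proposition and must be supplied.
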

\begin{proof}
It is enough to prove the Proposition for the two types of simple limits.
Let $n=k_1+ \ldots +k_l$, where $k_1, \ldots, k_l \in \bz_{\geq 1}$ and
let $C_0=\diag(\underbrace{a_1, \ldots, a_1}_{k_1}, \ldots, \underbrace{a_l, \ldots, a_l}_{k_l})$ and $C_i=\diag(\underbrace{b_{i,1}, \ldots, b_{i,k_i}}_{k_i})$ for $i=1,\ldots,l$ such that $a_r\ne a_s$ and $b_{i,r}\ne b_{i,s}$ for $r\ne s$. 
For the second type of limits, the statement follows from Proposition \ref{quant}:
$$ B(C_0) \otimes_{\bigotimes\limits_{i=1}^l Z(U(\fgl_{k_i}))} \bigotimes\limits_{i=1}^l \mA_{C_i}$$
maps to $\mA_{C_0^{-1}} \otimes_{\bigotimes\limits_{i=1}^lZ(U(\fgl_{k_i}))} \bigotimes\limits_{i=1}^l \mA_{C_i}$. 

Let $C_0=\diag(a_1, \ldots, a_{n-k})$ and $C_1=\diag(a_{n-k+1}, \ldots, a_n)$.  Suppose that
$\diag(C_0, \underbrace {0, \ldots, 0}_k)
+ t\cdot\diag(\underbrace{0, \ldots, 0}_{n-k}, C_1) \in T^{reg}$.
For the first type of limits we need to consider a limit subalgebra of the form $$i_k(B(C_0)) \otimes_{\bc} \psi_{n-k}(B(C_1))$$
and prove that it maps to 
$$\mA_{C_0^{-1}} \otimes_{ZU(\fgl_k)} \mA_{(\underbrace{0, \ldots,0}_k, C_1^{-1})}.$$

By Proposition \ref{quant}, the subalgebra $i_k(B(C_0))$ maps to $\mA_{C_0^{-1}} \subset U(\fgl_{n-k})$.
We need to prove that $\psi_{n-k}(B(C_1))$ maps to $\mA_{(0, \ldots, 0, C_1^{-1})}$. According to the proof of Theorem 6.5 in \cite{ir3}, the coefficients of the following series are the generators of $\psi_{n-k}(B(C_1))$:
$$t_{1 \ldots n-k}^{1 \ldots n-k}(u+n-k)^{-1} \cdot   \left(\prod_{i=1}^{n-k} a_i\right) \sum_{1 \leqslant a_1 \leqslant k}  a_{n-k+a_1} t_{1, \ldots, n-k, n-k+a_1}^{1, \ldots, n-k, n-k+a_1}(u)$$
$$t_{1 \ldots n-k}^{1 \ldots n-k}(u+n-k)^{-1} \cdot  \left(\prod_{i=1}^{n-k} a_i\right) \sum_{1 \leqslant a_1 < a_2 \leqslant k}  a_{n-k+a_1} a_{n-k+a_2} t_{1, \ldots
, n-k, n-k+a_1, n-k+a_2}^{1, \ldots, n-k, n-k+a_1, n-k+a_2}(u)$$
\begin{center}\ldots\end{center}
$$\tau_{n}(u,X) = t_{1 \ldots n-k}^{1 \ldots n-k}(u+n-k)^{-1} \cdot a_{1} \ldots a_n t_{1 \ldots n}^{1 \ldots n}(u).$$

The coefficients of $t_{1 \ldots n-k}^{1 \ldots n-k}(u+n-k)^{-1}$ map to some elements in the center of $U(\fgl_k)$ hence to the elements of $\mA_{C_0^{-1}}$.

Therefore the images of coefficients of the following series are contained in the image of the corresponding Bethe subalgebra:

$$a_{n-k+1}^{-1} \ldots a_n^{-1} \sum_{1 \leqslant a_1 \leqslant k}  a_{n-k+a_1} t_{1, \ldots, n-k, n-k+a_1}^{1, \ldots, n-k, n-k+a_1}(u)$$
$$a_{n-k+1}^{-1} \ldots a_n^{-1} \sum_{1 \leqslant a_1 < a_2 \leqslant k}  a_{n-k+a_1} a_{n-k+a_2} t_{1, \ldots
, n-k, n-k+a_1, n-k+a_2}^{1, \ldots, n-k, n-k+a_1, n-k+a_2}(u)$$
\begin{center}\ldots\end{center}
$$t_{1 \ldots n}^{1 \ldots n}(u) = \qdet T(u).$$

The leading term of these elements are the derivatives of coefficients of characteristic polynomial along $(0, \ldots, 0, C_0^{-1})\in \fh$. Therefore the images of coefficients of these series generate $\mA_{(0, \ldots, 0, C_1^{-1})}$.

\end{proof}

\subsection{The family of shift of argument subalgebras in centralizer}
Let $k \geq 0$. 
Consider the family $\mathfrak{U}$ of shift of argument subalgebras in $S(\fgl_{n+k})^{\fgl_k}$ of the form $ \mA_{(\chi,\underbrace{0, \ldots, 0}_k)}, \chi \in \fh^{reg}$. 
\begin{proposition}(\cite[Lemma 4.11]{ir2})
\begin{enumerate}
    \item The closure of the parameter space of the family $\mathfrak{U}$ is isomorphic to $\overline{M_{0,n+2}}$.
    \item For any $X \in \overline{M_{0,n+2}}$ subalgebra $\mA_X$ is free and maximal commutative subalgebra of $S(\fgl_{n+k})^{\fgl_k}$.
\end{enumerate}
\end{proposition}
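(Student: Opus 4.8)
The plan is to realize each member of $\mathfrak{U}$ as a tensor factor of a genuine limit subalgebra in the full shift of argument family for $\fgl_{n+k}$, and then transport freeness and maximality through this decomposition. Fix $\chi=\diag(z_1,\dots,z_n)\in\fh^{reg}$ with all $z_i\neq 0$, and pick any regular semisimple $C_1=\diag(w_1,\dots,w_k)\in\fgl_k$. For small $t\neq 0$ the element $C(t)=\diag(z_1,\dots,z_n,tw_1,\dots,tw_k)$ lies in $\fh_{n+k}^{reg}$, and as $t\to 0$ its last $k$ eigenvalues collide at $0$. Proposition \ref{size} then gives
\[
\lim_{t\to 0}\mA_{C(t)}=\mA_{(\chi,0)}\otimes_{Z(S(\fgl_k))}\mA_{C_1},
\]
where $\mA_{(\chi,0)}\subset S(\fgl_{n+k})^{\fgl_k}$ is the member of $\mathfrak{U}$ and $\mA_{C_1}\subset S(\fgl_k)$ is an ordinary regular shift of argument subalgebra, the two being glued along the Poisson center $Z(S(\fgl_k))=S(\fgl_k)^{\fgl_k}$ that both contain. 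By the theorem on closures of shift of argument subalgebras for $\fgl_{n+k}$, this limit is free and maximal commutative in $S(\fgl_{n+k})$; this is the single input I will exploit.

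For part (2), freeness of $\mA_{(\chi,0)}$ follows from Propositions \ref{tensorprod} and \ref{size} together with freeness of the ambient limit $\lim_t\mA_{C(t)}$, the two factors contributing complementary sets of free generators. The heart of the matter is maximality inside the invariant ring. Suppose $f\in S(\fgl_{n+k})^{\fgl_k}$ Poisson-commutes with $\mA_{(\chi,0)}$. Since $f$ is $\fgl_k$-invariant, $\{f,x\}=0$ for all $x\in\fgl_k$, and the Leibniz rule forces $\{f,g\}=0$ for every $g\in S(\fgl_k)$; in particular $f$ commutes with $\mA_{C_1}$. Hence $f$ commutes with the maximal commutative subalgebra $\mA_{(\chi,0)}\otimes_{Z(S(\fgl_k))}\mA_{C_1}$ and therefore lies in it. Now apply the Reynolds projection $\pi\colon S(\fgl_{n+k})\to S(\fgl_{n+k})^{\fgl_k}$ onto $GL_k$-invariants: it is a module map over the invariant subring, so for $a\in\mA_{(\chi,0)}$ and $b\in\mA_{C_1}$ one has $\pi(ab)=a\,\pi(b)$ with $\pi(b)\in S(\fgl_k)^{\fgl_k}=Z(S(\fgl_k))\subset\mA_{(\chi,0)}$. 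Thus $\pi$ maps the whole product into $\mA_{(\chi,0)}$, and since $f=\pi(f)$ we conclude $f\in\mA_{(\chi,0)}$, proving maximality.

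For part (1), I identify the parameter space. The subalgebras $\mA_{(\chi,0)}$ are invariant under $\chi\mapsto a\chi$ but not under adding multiples of the identity (the zeros are pinned), so the open part of $\mathfrak{U}$ is parameterized by $\{(z_1,\dots,z_n)\mid z_i\neq z_j,\ z_i\neq 0\}/\bc^\times=M_{0,n+2}$, with marked points $0,z_1,\dots,z_n,\infty$. To compactify, I use the full $\fgl_{n+k}$ family over $\overline{M_{0,n+k+1}}$ and restrict to the boundary divisor along which the $k$ points labelled $n+1,\dots,n+k$ collide; its closure is $\overline{M_{0,n+2}}\times\overline{M_{0,k+1}}$, and over it the flat family of subalgebras is $\mA_X\otimes_{Z(S(\fgl_k))}\mA_Y$ by Proposition \ref{size}. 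The factor $\mA_X$ depends only on the $\overline{M_{0,n+2}}$-coordinate and is extracted algebraically (as the $\fgl_k$-invariants of the product, by the computation above), giving a morphism $\overline{M_{0,n+2}}\to Z_{\mathfrak{U}}$ extending the identity on $M_{0,n+2}$. It is injective because $\mA_X$ recovers $\chi$ projectively on the open stratum and its boundary combinatorics via the structure theorems.

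The main obstacle I anticipate is precisely this last identification: to conclude $Z_{\mathfrak{U}}\cong\overline{M_{0,n+2}}$ rather than some other compactification, one must verify that the Poincar\'e series of the limit subalgebras $\mA_X$ does not jump along the boundary — equivalently, that $\bigoplus_{m\le d}\mA_X^{(m)}$ remains a maximal commutative subspace of $\bigoplus_{m\le d}(S(\fgl_{n+k})^{\fgl_k})^{(m)}$ in every degree $d$, in the spirit of the size-non-jumping lemma used in the proof of Theorem \ref{clo}. Granting this, the bijective birational morphism from the smooth projective variety $\overline{M_{0,n+2}}$ is forced to be an isomorphism by Zariski's main theorem.
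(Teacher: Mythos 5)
You should first be aware that the paper contains no proof of this Proposition at all: it is imported verbatim, with both parts, from \cite[Lemma 4.11]{ir2}, so your argument has to stand on its own rather than be measured against an internal one. Judged that way, the strongest piece of your proposal is the maximality argument on the open stratum, and it is correct: embedding $\mA_{(\chi,0,\ldots,0)}$ into the genuine limit $L=\lim_{t\to 0}\mA_{C(t)}=\mA_{(\chi,0,\ldots,0)}\otimes_{Z(S(\fgl_k))}\mA_{C_1}$ via Proposition \ref{size}, noting that a $\fgl_k$-invariant $f$ commuting with $\mA_{(\chi,0,\ldots,0)}$ automatically Poisson-commutes with all of $S(\fgl_k)$ and hence with $L$, so $f\in L$ by maximality of $L$ in $S(\fgl_{n+k})$, and then $f=\pi(f)\in\mA_{(\chi,0,\ldots,0)}$ by the Reynolds projection. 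This in fact shows $\mA_{(\chi,0,\ldots,0)}=L\cap S(\fgl_{n+k})^{\fgl_k}$, which is exactly the right structural statement.

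There are, however, genuine gaps. First, everything in part (1), and part (2) at boundary points $X$, rests on the step you explicitly ``grant'': that passing to $\fgl_k$-invariants commutes with taking limits, i.e.\ that the Hilbert series of the invariant subalgebras does not jump at the boundary. Intersection with a fixed subspace is only semicontinuous in the Grassmannian, so without this step you only get the inclusion $\lim_j\mA_{(\chi_j,0,\ldots,0)}\subseteq\bigl(\lim_j L_j\bigr)\cap S(\fgl_{n+k})^{\fgl_k}$; you can then neither identify $Z_{\mathfrak{U}}$ with $\overline{M_{0,n+2}}$ nor run the Reynolds argument at boundary $X$, because you do not yet know that boundary members of $\mathfrak{U}$ are invariant parts of limits of the full family. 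Moreover, the size-non-jumping lemma invoked in the proof of Theorem \ref{clo} concerns the full family in $S(\fgl_n)$, not its $\fgl_k$-invariant subfamily, so it cannot simply be cited; a repair (computing the Hilbert series of $\mA_X$ from those of $L$, $\mA_{Y_0}$ and $Z(S(\fgl_k))$ via the free tensor decomposition of Proposition \ref{tensorprod} over the whole boundary divisor) is available but must be written out. Second, your concluding step is false as stated: a bijective birational proper morphism from a smooth projective variety onto its image need not be an isomorphism (the normalization of a cuspidal cubic is the standard counterexample); Zariski's main theorem requires the target $Z_{\mathfrak{U}}$ to be \emph{normal}, which you have not established. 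The usual way out is to produce an inverse morphism, e.g.\ the projection of $Z_{\mathfrak{U}}$ to the Grassmannian of quadratic components identified with $\overline{M_{0,n+2}}$ in the spirit of \cite{afv}, and to check that both compositions are the identity on dense open sets. Third, ``the two factors contributing complementary sets of free generators'' is a cancellation-type assertion: knowing that $L\cong\mA_{(\chi,0,\ldots,0)}\otimes_{Z(S(\fgl_k))}\mA_{C_1}$ is polynomial does not formally imply that the factor is polynomial; in the graded connected setting it does follow (polynomial implies regular, and regularity descends from $A[u_1,\ldots,u_m]$ to $A$), but that argument, or a citation replacing it, is missing.
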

Similarly to the proof of Proposition \ref{liftt} one can prove that for any $X \in \overline{M_{0,n+2}}$ a lifting of $\mA_X \subset S(\fgl_{n+k})^{\fgl_k}$ to $U(\fgl_{n+k})^{\fgl_k}$ is unique. Moreover, the closure of the family of subalgebras of the form $\mA_{(\chi, \underbrace{0, \ldots, 0})} \subset U(\fgl_{n+k})^{\fgl_k}, \chi \in \fh^{reg}$ is isomorphic to $\overline{M_{0,n+2}}$ as well because the size of a limit subalgebra from this family is the same as for generic one, i.e. the proof is analagous to the proof of Theorem \ref{clo}.
Let us describe explicitly the  subalgebra $B(X) \subset U(\fgl_{n+k})^{\fgl_k}$ for any $X \in \overline{M_{0,n+2}}$.

Let $X_\infty$ be the irreducible component of $X\in\overline{M_{0,n+2}}$ containing the marked point $\infty$. We identify $X_\infty$ with the standard $\mathbb{CP}^1$ in such a way that the marked point $\infty$ is $\infty$ and the point where the curve containing the marked point $0$ touches $X_\infty$ is $0$. To any distinguished point $\alpha\in X_\infty$ we assign the number $k_\alpha$ of nonzero marked points on the maximal (possibly reducible) curve $X_\alpha$ attached to $X_\infty$ at $\alpha$ (we set $k_\alpha=1$ if $X_\alpha$ is a (automatically, marked) point). Let $\chi$ be the diagonal $(n-k_0)\times(n-k_0)$-matrix with the eigenvalues $\alpha$ of multiplicity $k_\alpha$ for all distinguished points $0\ne\alpha\in X_\infty$. Then the subalgebra $\mA_{(\chi,\underbrace{0, \ldots, 0}_{k_0+k})}$ is centralized by the Lie subalgebra $\bigoplus\limits_{\alpha\ne0}\fgl_{k_\alpha}$ in $\fgl_{n-k_0}\subset U(\fgl_{n-k_0})\subset U(\fgl_n)$ and by the complement $U(\fgl_{k_0})$.

\begin{proposition}
\label{ustr}
The subalgebra in $U(\fgl_{n+k})^{\fgl_k}$ corresponding to $X \in \overline{M_{0,n+2}}$ has the form
$$ \mA_{X_0} \otimes_{ZU(\fgl_k)} (\mA_{(\chi,\underbrace{0, \ldots, 0}_{k_0+k})} \otimes_{ ZU(\bigoplus_{\alpha \not = 0}\fgl_{  k_\alpha})}  \bigotimes_{i=1}^l \mA_{X_{\alpha_i}})$$

\end{proposition}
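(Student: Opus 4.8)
The plan is to prove the decomposition first at the Poisson level in $S(\fgl_{n+k})^{\fgl_k}$ and then transport it to $U(\fgl_{n+k})^{\fgl_k}$ using uniqueness of liftings. As recalled above, the family $\mathfrak{U}$ has parameter space $\overline{M_{0,n+2}}$, and the argument of Proposition~\ref{liftt} shows that the lifting of $\overline{\mA_X}$ to $U(\fgl_{n+k})^{\fgl_k}$ is unique. Hence it suffices to exhibit \emph{some} commutative subalgebra of $U(\fgl_{n+k})^{\fgl_k}$ that lifts $\overline{\mA_X}$: by uniqueness it must then coincide with $\mA_X$. The candidate will of course be the product of the liftings of the factors on the right-hand side of the claimed formula.

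For the Poisson statement I would reduce to simple limits. By the remark following Proposition~\ref{size}, every $X\in\overline{M_{0,n+2}}$ is reached by iterating the simple limits of Proposition~\ref{size}, so I would argue by induction on the number of iterations, treating a single simple limit and then appealing to the recursive structure of Proposition~\ref{tensorprod}. The feature specific to the family $\mathfrak{U}$ is that $\chi=(\chi',\underbrace{0,\ldots,0}_k)$, so that $0$ is always an eigenvalue of multiplicity at least $k$, and the frozen subalgebra $\fgl_k$ therefore stays attached to the marked point $0$ throughout every degeneration. Sorting the colliding eigenvalues in Proposition~\ref{size} accordingly: nonzero eigenvalues colliding at a point $\alpha\ne 0$ create the bubbles $X_\alpha$ and contribute the factors $\mA_{X_\alpha}\subset U(\fgl_{k_\alpha})$, glued over $ZU(\bigoplus_{\alpha\ne0}\fgl_{k_\alpha})$; eigenvalues colliding with $0$ enlarge the bubble $X_0$ over the special point, which thus lives in $U(\fgl_{k_0+k})^{\fgl_k}$ and contributes $\mA_{X_0}$, glued over $ZU(\fgl_k)$; and the component $X_\infty$ retains the distinct nonzero eigenvalues with their multiplicities, contributing $\mA_{(\chi,\underbrace{0,\ldots,0}_{k_0+k})}$. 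This reproduces the classical version of the claimed formula, in complete parallel with Theorem~\ref{result1} for Bethe subalgebras.

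To finish, each factor $\mA_{X_0}$, $\mA_{(\chi,0,\ldots,0)}$ and $\mA_{X_{\alpha_i}}$ admits a unique lifting to the relevant centralizer subalgebra, by Theorem~\ref{t2} and the centralizer analogue of Proposition~\ref{liftt} noted just before the statement. The product of these lifts is commutative because the factors occupy pairwise commuting positions: the blocks $U(\fgl_{k_\alpha})$ commute with each other and with $U(\fgl_{k_0+k})^{\fgl_k}$, while $\mA_{(\chi,0,\ldots,0)}$ is centralized by $\bigoplus_\alpha\fgl_{k_\alpha}$. Since the factors are free and are glued only along the indicated central subalgebras, the associated graded of this product is exactly $\overline{\mA_X}$ by the Poisson decomposition above, so by uniqueness of the lifting it equals $\mA_X$, which is the assertion.

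The step I expect to be the main obstacle is the correct bookkeeping of the frozen $\fgl_k$ at the point $0$: one must verify that in every simple degeneration the $k$ zero directions remain glued to the bubble $X_0$ rather than dispersing among the components of $X_\infty$, and that the outer gluing of $\mA_{X_0}$ with the remaining factors is exactly along $ZU(\fgl_k)$ — neither finer nor coarser. Concretely this amounts to checking that passing to $\fgl_k$-invariants is compatible with the limit procedure of Proposition~\ref{size}; once this compatibility is established, the remaining verifications (commutativity and the computation of the associated graded) follow routinely from Propositions~\ref{tensorprod} and~\ref{size}.
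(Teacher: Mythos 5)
Your proposal is correct and follows essentially the same route as the paper: the paper's own proof is the single sentence ``follows from the explicit description of limit subalgebras in $U(\fgl_{n+k})$'', which unwinds to exactly your argument — the classical decomposition obtained by iterating the simple limits of Proposition~\ref{size} together with Proposition~\ref{tensorprod}, transferred to $U(\fgl_{n+k})^{\fgl_k}$ via Theorem~\ref{t2} and the uniqueness of liftings noted before the statement. Your version simply makes explicit the bookkeeping (the frozen $\fgl_k$ at the marked point $0$ and the gluing over $ZU(\fgl_k)$) that the paper leaves implicit.
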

\begin{proof}
Follows from the explicit description of limit subalgebras in $U(\fgl_{n+k})$.
\end{proof}

This means that one can get any limit subalgebra by iterating the following two simple limits.
The first simple limit has the form 
$$\mA_{(C_0, \underbrace{0, \ldots, 0}_{m+k})} \otimes_{ZU(\fgl_k)} \mA_{(C_1, \underbrace{0, \ldots, 0}_k)} $$
The second simple limit has the form 
$$\mA_{(C_0, \underbrace{0,\ldots,0}_k)} \otimes_{\bigotimes\limits_{i=1}^l Z(U(\fgl_{k_i}))} \bigotimes_{i=1}^{l} \mA_{C_i}$$

\subsection{Centralizer construction and Bethe subalgebras}

Let $X \in \overline{M_{0,n+2}}$.
We have the following 
\begin{proposition} \cite{ir2}
\label{bethecenter}
The maps $\eta_k$ are an asymptotic isomorphism between $B(X) \otimes A_0$ and $\mA_{X} \subset U(\fgl_{n+k})^{\fgl_k}$.
\end{proposition}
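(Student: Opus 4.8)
The plan is to deduce the statement from Theorem~\ref{qasym}, which already gives that $\eta_k\colon Y(\fgl_n)\otimes A_0 \to U(\fgl_{n+k})^{\fgl_k}$ is an asymptotic isomorphism: for every $N$ there is $K$ so that for $k>K$ the restriction of $\eta_k$ to the $N$-th filtered piece $(Y(\fgl_n)\otimes A_0)_N$ is a \emph{linear} isomorphism onto $U(\fgl_{n+k})^{\fgl_k}_N$. In particular $\eta_k$ is injective on $(B(X)\otimes A_0)_N$ for $k$ large, so the proposition reduces to the equality of filtered subspaces $\eta_k\bigl((B(X)\otimes A_0)_N\bigr)=(\mA_X)_N$. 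I would prove this first for generic $C\in T^{reg}$ and then propagate it to all $X\in\overline{M_{0,n+2}}$ by a continuity argument.

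The crux is the generic case. For $C\in T^{reg}$ one has $\eta_k(B(C)\otimes A_0)=\Phi_k(B(C))\cdot z_k(A_0)$ with $\Phi_k=\ev\circ\omega_{n+k}\circ i_k$ and $z_k(A_0)=Z(U(\fgl_{n+k}))$. The central factor $z_k(A_0)$ supplies exactly the part of $\mA_{(C,0,\dots,0)}$ lying in $Z(U(\fgl_{n+k}))$, which every maximal commutative subalgebra of the centraliser must contain; this is the whole point of adjoining $A_0$. For the remaining generators I would repeat the computation of Proposition~\ref{quant} inside the centraliser: the coefficients of $\tau_p(u,C)$ are carried by $\Phi_k$ to quantum minors whose symbols are the derivatives of the coefficients of the characteristic polynomial of $E$ along the direction $(C,0,\dots,0)\in\fh$, i.e. to generators of the classical subalgebra $\overline{\mA_{(C,0,\dots,0)}}$. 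This gives the inclusion $\eta_k(B(C)\otimes A_0)\subseteq \mA_{(C,0,\dots,0)}$, and since $B(C)\otimes A_0$ is maximal commutative (as $B(C)$ is, by Theorem~\ref{betheprop}, and $A_0$ is a polynomial tensor factor) while $\mA_{(C,0,\dots,0)}$ is maximal commutative of matching size, the inclusion is an equality on filtered pieces for $k$ large. The subtle bookkeeping point is the parameter: the automorphism $\omega_{n+k}$ and the evaluation $\ev$ each contribute one copy of the involution $\sigma_1$ of Proposition~\ref{eval}, and the two copies cancel, so the image is governed by $X$ itself rather than by $\sigma_1(X)$.

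To pass from generic $C$ to an arbitrary $X$ I would use that, for $k>K(N)$, the fixed linear isomorphism $\eta_k\colon (Y(\fgl_n)\otimes A_0)_N\to U(\fgl_{n+k})^{\fgl_k}_N$ induces an isomorphism of Grassmannians of subspaces which commutes with limits. Both families $X\mapsto (B(X)\otimes A_0)_N$ and $X\mapsto (\mA_X)_N$ are flat over $\overline{M_{0,n+2}}$ (the closure statements for Bethe subalgebras and for the family $\mathfrak{U}$ of shift-of-argument subalgebras, both from \cite{ir2}), so they define morphisms into Grassmannians of one and the same dimension. Since these morphisms agree on the dense open locus of regular $C$ and the target is separated, they agree everywhere, giving $(\mA_X)_N=\lim_{t\to0}(\mA_{C(t)})_N=\lim_{t\to0}\eta_k\bigl((B(C(t))\otimes A_0)_N\bigr)=\eta_k\bigl((B(X)\otimes A_0)_N\bigr)$ in one stroke. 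As an alternative to the continuity argument one can verify the equality directly on the two simple limits of Theorem~\ref{result2}, checking that $\eta_k$ intertwines the tensor decompositions of limit Bethe subalgebras (Theorems~\ref{result1}, \ref{result2}) with those of $\mA_X$ (Proposition~\ref{ustr}); this needs only compatibility of $\Phi_k$ with the sub-Yangian embeddings $i_m$ and $\psi_m$, which is read off from their definitions.

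The step I expect to be the main obstacle is the generic base case, that is, establishing $\Phi_k(B(C))\cdot Z(U(\fgl_{n+k}))=\mA_{(C,0,\dots,0)}$ with the correct parameter. Here one must track $i_k$, $\omega_{n+k}$ and $\ev$ simultaneously, identify the $\fgl_k$-centraliser directions with the extra marked point at $0$, and verify the cancellation of the two copies of $\sigma_1$. Everything else --- the reduction to this case, the continuity propagation, and the maximality/size comparison --- is routine given Theorem~\ref{qasym} and the freeness and closure results already in place.
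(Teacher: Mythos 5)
Your proposal is essentially a proof of the refined statement that the paper actually proves right after this one (Proposition~\ref{isomor}: $\eta_k(B(X)\otimes A_0)=\mA_X$ for \emph{every} $k$); note that Proposition~\ref{bethecenter} itself is quoted from \cite{ir2} without proof, and your reduction via Theorem~\ref{qasym} from the asymptotic statement to the equality of images is exactly the right move. Your generic base case coincides with the paper's Lemma~\ref{le:eta}: the paper writes $B(C)=\omega_n(B(C^{-1}))$, identifies $\omega_{n+k}\circ i_k\circ\omega_n$ with $\psi_k$, and invokes the quantum-minor computation from the proof of Proposition~\ref{eval}, which is precisely your ``two copies of the involution cancel'' bookkeeping; you have correctly isolated why the parameter comes out as $X$ rather than $\sigma_1(X)$. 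Where you genuinely diverge is the propagation to arbitrary $X$: you propose agreement of two morphisms $\overline{M_{0,n+2}}\to Gr$ (using flatness of both families and separatedness of the Grassmannian), while the paper instead verifies compatibility with the two elementary limits of Theorem~\ref{result2}, using that $\eta_k$ restricted to $i_m(Y(\fgl_{n-m}))$ is $\eta_{k+m}$ and restricted to $\psi_{n-m}(Y(\fgl_m))$ is $\eta_k$ (\cite[Lemma 2.15]{ir2}). Both routes work, since the closure results you need are in place; your continuity argument is shorter, whereas the paper's explicit check also exhibits how $\eta_k$ matches the tensor decompositions of Theorem~\ref{result1} with those of Proposition~\ref{ustr}, which is structural information reused elsewhere.

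One step of your generic case is stated with the wrong logical direction: the computation of symbols of the images of the $\tau_p(u,C)$-coefficients gives the \emph{graded} containment $\gr\bigl(\eta_k(B(C)\otimes A_0)\bigr)\supseteq \overline{\mA_{(C,0,\dots,0)}}$, not the filtered inclusion $\eta_k(B(C)\otimes A_0)\subseteq \mA_{(C,0,\dots,0)}$ that you claim. Knowing the symbols of generators never bounds the image from above. The repair is the pattern of Propositions~\ref{quant} and \ref{eval}: since the image is commutative, its associated graded is Poisson commutative inside $S(\fgl_{n+k})^{\fgl_k}$, and maximality of $\overline{\mA_{(C,0,\dots,0)}}$ as a Poisson commutative subalgebra there upgrades the containment to a graded equality; then uniqueness of the lifting to $U(\fgl_{n+k})^{\fgl_k}$ (the analogue of Proposition~\ref{liftt}, stated in the paper for this family) gives equality of the filtered subalgebras. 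Two filtered subalgebras with equal associated graded need not coincide, so the uniqueness-of-lifting input (or an equivalent dimension count per filtered degree) cannot be omitted; with it, your argument closes.
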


In fact the statement of Proposition \ref{bethecenter} can be refined. 
\begin{proposition}
\label{isomor}
For any $k \geq 0$ we have $\eta_k(B(X) \otimes A_0) = \mA_{X} \subset U(\fgl_{n+k})^{\fgl_k}.$
\end{proposition}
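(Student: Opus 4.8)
The plan is to reduce the statement to its only nontrivial content. The inclusion $\eta_k(B(X)\otimes A_0)\subseteq\mA_X$ is already part of Proposition~\ref{bethecenter}: the assertion that $\eta_k$ is an asymptotic isomorphism between $B(X)\otimes A_0$ and $\mA_X$ presupposes that $\eta_k$ carries the former into the latter for every $k$. Hence the whole content of the refinement is the reverse inclusion $\mA_X\subseteq\eta_k(B(X)\otimes A_0)$, that is, surjectivity of $\eta_k|_{B(X)\otimes A_0}$ onto $\mA_X$ for \emph{all} $k$, rather than only for $k$ large as the asymptotic statement guarantees.

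The logical glue I would use is the passage to associated graded algebras. Since $\eta_k(B(X)\otimes A_0)\subseteq\mA_X$ as filtered subalgebras of $U(\fgl_{n+k})^{\fgl_k}$, it is enough to show that every generator of $\overline{\mA_X}=\gr\mA_X$ arises as the leading symbol of $\eta_k(b)$ for some $b\in B(X)\otimes A_0$; together with the inclusion this forces $\gr\eta_k(B(X)\otimes A_0)=\overline{\mA_X}$ and hence the filtered equality. The advantage is that in $S(\fgl_{n+k})^{\fgl_k}$ the generators of $\overline{\mA_X}$ are explicit polynomials (derivatives of the coefficients of the characteristic polynomial of $E$ along the corresponding directions), so surjectivity can be verified by producing explicit preimages rather than by a dimension count.

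I would treat the base case $X=C\in T^{reg}$ by computing the images under $\Phi_k=\ev\circ\omega_{n+k}\circ i_k$ of the generating quantum minors of $B(C)$ and checking that, together with $z_k(A_0)=Z(U(\fgl_{n+k}))$, they generate $\mA_{(\chi,\underbrace{0,\ldots,0}_{k})}$ for the $\chi$ determined by $C$; this is the centralizer-level analogue of Proposition~\ref{quant}, the case $k=0$ being Proposition~\ref{eval}, and the computation is again carried out by the quantum-minor formulas. For a general, possibly degenerate, $X\in\overline{M_{0,n+2}}$ I would induct on the number of irreducible components, using that every $X$ is obtained from $T^{reg}$ by iterating the two simple limits of Theorem~\ref{result2}. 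The key point is that the tensor-product descriptions of $B(X)$ in Theorem~\ref{result2} and of $\mA_X$ in Proposition~\ref{ustr} are parallel and are intertwined by $\eta_k$: each simple limit splits off either a factor $i_m(B(C_0))$ or a product of smaller shift-of-argument subalgebras amalgamated over $ZU(\bigoplus\fgl_{k_\alpha})$, reducing the claim to instances of the same statement for smaller $n$ and $k$.

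The hard part will be exactly this surjectivity at degenerate $X$ and at small $k$. A naive flatness argument fails: although equality holds on a dense open subset of $\overline{M_{0,n+2}}$ and $\{\mA_X\}$ is a flat family of constant Poincar\'e series, the kernel of $\eta_k$ restricted to $(B(X)\otimes A_0)_N$ may jump up along the boundary, so $\dim\eta_k(B(X)\otimes A_0)_N$ can a priori drop there. The only way I see to exclude this is the explicit matching of generators through the compatible tensor decompositions above; the delicate bookkeeping is to verify that the central amalgamation over $ZU(\bigoplus\fgl_{k_\alpha})$ in Proposition~\ref{ustr} is precisely the $\eta_k$-image of the central amalgamation in Theorem~\ref{result2}, so that no generator of $\overline{\mA_X}$ is lost in the limit.
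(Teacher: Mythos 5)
Your proposal is correct and takes essentially the same route as the paper's proof: an explicit base case for $C\in T$ reduced to the quantum-minor computation of Proposition~\ref{eval} (the paper packages this as Lemma~\ref{le:eta}, using the involution $\omega$ to recycle that computation rather than redoing it), followed by induction over the two elementary limits, matching the tensor decomposition of $B(X)$ from Theorem~\ref{result2} with that of $\mA_X$ from Proposition~\ref{ustr}. The one ingredient you leave implicit in the phrase ``intertwined by $\eta_k$'', and which the paper cites explicitly to make the first-type limit step work, is that $\eta_k$ restricts to $\eta_{k+m}$ on $i_m(Y(\fgl_{n-m}))$ and to $\eta_k$ on $\psi_{n-m}(Y(\fgl_m))$ (Lemma 2.15 of \cite{ir2}).
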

\begin{proof}
Let $C \in T$. 
\begin{lemma}\label{le:eta}
\begin{enumerate}
    \item $\eta_k(\mA_C \otimes 1) = \mA_C \subset U(\fgl_n) \subset U(\fgl_{n+k})$
    \item $\eta_k(B(C) \otimes A_0) = \mA_{(C, \underbrace{0, \ldots, 0)}_k}$
\end{enumerate}
\end{lemma}
\begin{proof}
1) $\eta_k(\mA_C \otimes 1) = \ev \circ \omega_{n+k} \circ i_k(\mA_C) = \mA_C \subset U(\fgl_n) \subset U(\fgl_{n+k})$, because the restriction of $\omega_{n+k}$ to $U(\fgl_{n+k})$ is the identity map. \\
2) $\eta_k(B(C) \otimes A_0) = \ev \circ \omega_{n+k} \circ i_k(B(C)) \cdot ZU(\fgl_{n+k}) = \ev \circ \omega_{n+k} \circ i_k \circ \omega_n (B(C^{-1})) \cdot ZU(\fgl_{n+k}) = \ev \circ \psi_{k}(B(C^{-1})) \cdot ZU(\fgl_{n+k}) = \mA_{(C, 0, \ldots, 0)}$ according to the proof of Proposition \ref{eval}.
\end{proof}
As in the proof of Proposition \ref{eval} it is enough to show that the map is compatible with the elementary limits.
Consider a limit of the second type. Suppose that $X \in \overline{M_{0,n+2}}$ such that
$$B(X) =  B(C_0) \otimes_{\bigotimes\limits_{i=1}^lZ(U(\fgl_{k_i}))} \bigotimes \limits_{i=1}^l \mA_{C_i}$$
From Lemma it follows that $$\eta_k(B(X) \otimes A_0) = \mA_{(C_0, \underbrace{0, \ldots, 0)}_k} \otimes_{\bigotimes\limits_{i=1}^lZ(U(\fgl_{k_i}))} \mA_{C_i} = \mA_{X} \subset U(\fgl_{n+k})^{\fgl_k}$$

Consider the first type of a limit subalgebra. Let $X \in \overline{M_{0,n+2}}$ such that
$$B(X) = i_m(B(C_0)) \otimes \psi_{n-m}(B(C_1)).$$ From \cite[Lemma 2.15]{ir2} it follows, that the restriction of $\eta_k$ to $i_m(Y(\fgl_{n-m}))$ is $\eta_{k+m}$ and that the restriction of $\eta_k$ to $\psi_{n-m}(Y(\fgl_m))$ is $\eta_k$. Applying Lemma~\ref{le:eta} we get

$$\eta_k(B(X) \otimes A_0) = \mA_{(C_0, \underbrace{0, \ldots, 0)}_{k+m}} \otimes \mA_{( C_1, \underbrace{0, \ldots, 0}_k)} = \mA_{X} \subset U(\fgl_{n+k})^{\fgl_k}$$

\end{proof}

\subsection{Spectra of quantum shift of argument subalgebras}

Let $V_{\lambda}$ be an irreducible representation of $\fgl_n$ with the arbitrary highest weight $\lambda$.
\begin{theorem}
For any $X \in \overline{M_{0,n+1}}$ the subalgebra $\mA_{X}$ acts with a cyclic vector in $V_\lambda$. 
\end{theorem}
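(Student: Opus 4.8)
The plan is to argue by induction on $n$, combining the explicit tensor product description of the limit subalgebras $\mA_X$ with the known cyclicity results for the two kinds of building blocks. The base case $n=1$ is trivial, as $V_\lambda$ is one-dimensional; moreover, whenever $X$ is the generic point (an irreducible curve) we have $\mA_X=\mA_C$ with $C\in T^{reg}$, and cyclicity of $\mA_C$ on the irreducible $V_\lambda$ is exactly the result of \cite{ffr}. For the inductive step I assume $X$ is a genuine limit and invoke Proposition \ref{tensorprod} together with Theorem \ref{t2}: the subalgebra $\mA_X\subset U(\fgl_n)$ has the form
$$\mA_X=\mA_{C}\otimes_{ZU(\bigoplus\limits_\alpha\fgl_{k_\alpha})}\bigotimes\limits_\alpha \mA_{X_\alpha},$$
where $C$ is the diagonal matrix whose \emph{distinct} eigenvalues $\alpha$ record the distinguished points on the top component $X_\infty$ with multiplicities $k_\alpha$, the factor $\mA_C$ is centralized by the Levi $\bigoplus_\alpha\fgl_{k_\alpha}$ and contains $ZU(\bigoplus_\alpha\fgl_{k_\alpha})$, and each $\mA_{X_\alpha}$ is a limit shift of argument subalgebra for $\fgl_{k_\alpha}$. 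Stability of $X_\infty$ forces at least two distinguished points besides $\infty$, so $\sum_\alpha k_\alpha=n$ with at least two nonzero summands, whence $k_\alpha<n$ for all $\alpha$ and the induction hypothesis is applicable to every $\mA_{X_\alpha}$.

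First I would restrict $V_\lambda$ to the Levi subalgebra and write its isotypic decomposition
$$V_\lambda=\bigoplus_\mu M_\mu\otimes W_\mu,\qquad W_\mu=\bigotimes_\alpha W_{\mu^{(\alpha)}},$$
where $W_\mu$ is the irreducible $\bigoplus_\alpha\fgl_{k_\alpha}$-module of highest weight $\mu=(\mu^{(\alpha)})_\alpha$ and $M_\mu=\Hom_{\bigoplus_\alpha\fgl_{k_\alpha}}(W_\mu,V_\lambda)$ is the multiplicity space. Since $\mA_C$ commutes with the Levi, Schur's lemma shows that on each block it acts as an operator on $M_\mu$ tensored with the identity on $W_\mu$; dually, each $\mA_{X_\alpha}\subset U(\fgl_{k_\alpha})$ acts on the tensor slot $W_{\mu^{(\alpha)}}$ and as the identity on $M_\mu$ and on the other slots. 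Hence on the block $M_\mu\otimes W_\mu$ the algebra $\mA_X$ acts as the tensor product $\mA_C|_{M_\mu}\otimes\bigotimes_\alpha \mA_{X_\alpha}|_{W_{\mu^{(\alpha)}}}$ of algebras acting on complementary factors.

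Next I would assemble a cyclic vector block by block. By the induction hypothesis each $\mA_{X_\alpha}$ has a cyclic vector $w_{\mu^{(\alpha)}}$ on the irreducible $W_{\mu^{(\alpha)}}$, and by \cite{hkrw} the degenerate central factor $\mA_C$ has a cyclic vector $m_\mu$ on the multiplicity space $M_\mu$. Because the factors act on complementary tensor slots, the vector $m_\mu\otimes\bigotimes_\alpha w_{\mu^{(\alpha)}}$ is cyclic for $\mA_X$ on the whole block $M_\mu\otimes W_\mu$. Finally, $ZU(\bigoplus_\alpha\fgl_{k_\alpha})\subset\mA_X$ acts on the block $M_\mu\otimes W_\mu$ by the scalar $\chi_\mu$ (the Levi central character), and these scalars are pairwise distinct for distinct dominant $\mu$, since a central character of $U(\fgl_{k_\alpha})$ determines its dominant highest weight. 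Thus the blocks are the distinct joint eigenspaces of $ZU(\bigoplus_\alpha\fgl_{k_\alpha})$, and Lagrange interpolation produces elements of $\mA_X$ projecting onto each block; applying these to the sum $v=\sum_\mu m_\mu\otimes\bigotimes_\alpha w_{\mu^{(\alpha)}}$ recovers each block's cyclic vector, so $\mA_X\cdot v=V_\lambda$ and $v$ is cyclic.

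I expect the main obstacle to be the cyclicity of the degenerate central factor $\mA_C$ on the multiplicity spaces $M_\mu$: this is exactly what is \emph{not} supplied by \cite{ffr} (which treats only $C\in T^{reg}$ on irreducibles), and it is the essential external input drawn from the shift-of-argument/Gaudin theory of \cite{hkrw} (in the form needed for the centralizer of a Levi, not just a single $\fgl_k$). A secondary technical point is to verify that amalgamating over $ZU(\bigoplus_\alpha\fgl_{k_\alpha})$ does not shrink the image on a fixed block, which is automatic because the amalgamating center acts there by a scalar, and to confirm that the general tensor description indeed arises by iterating the simple limits of Proposition \ref{size}; both are routine once the structural picture above is in place.
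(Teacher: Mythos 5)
Your proposal is correct and essentially coincides with the paper's own approach: the paper gives no separate argument for this theorem, treating it as the $k=0$ instance of Proposition \ref{c1}, whose stated proof is precisely your induction on the number of irreducible components of the curve, combining the tensor-product structure of $\mA_X$ (Proposition \ref{tensorprod}, Theorem \ref{t2}) with the regular case from \cite{ffr} and the Levi multiplicity-space cyclicity imported from \cite{hkrw}. Your block-by-block assembly (separating the Levi isotypic blocks by their central characters and interpolating to obtain the block projectors inside $\mA_X$) simply spells out the details that the paper compresses into the phrase ``induction on the number of irreducible components of the curve.''
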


Let $k \geq 0$ and let $V_{\lambda}$ be an irreducible representation of $\fgl_{n+k}$ with the highest weight $(\lambda_1, \ldots, \lambda_{n+k})$.

Let 
$$V = \bigoplus_{\mu} M_{\lambda \mu} \otimes V_{\mu}$$
be the decomposition of the restriction of $V$ to $\fgl_k$ into irreducible representations of $\fgl_k$.
The following Proposition is well-known.

\begin{proposition}
\label{repr}
1) $M_{\lambda \mu} \not = 0$ if and only if $\mu$ is a subdiagram of $\lambda$ and
$\mu_i - \lambda_{i+n} \geq 0 \ \forall \, i = 1, \ldots, n$; \\
2) $M_{\lambda \mu}$ is an irreducible representation of $U(\fgl_{n+k})^{\fgl_k}$.
\end{proposition}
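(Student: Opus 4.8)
The plan is to handle the two assertions by different classical tools: part (1) is a branching-multiplicity computation, and part (2) is a double-commutant argument.

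For part (1), I would realize the restriction $V_\lambda|_{\fgl_k}$ as a composition of $n$ elementary restrictions along the chain of lower-right corner subalgebras $\fgl_{n+k} \supset \fgl_{n+k-1} \supset \cdots \supset \fgl_{k+1} \supset \fgl_k$. Each single step $\fgl_m \downarrow \fgl_{m-1}$ is multiplicity-free and governed by the classical Gelfand--Tsetlin interlacing (betweenness) rule, so that iterating them identifies $\dim M_{\lambda\mu}$ with the number of Gelfand--Tsetlin patterns of depth $n$ having top row $\lambda$ and bottom row $\mu$, equivalently the number of semistandard Young tableaux of skew shape $\lambda/\mu$ with entries in $\{1,\ldots,n\}$. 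Such a tableau exists if and only if every column of $\lambda/\mu$ has at most $n$ boxes, and I would then spell out only the elementary translation of this column-height bound into row inequalities: if some column of $\lambda/\mu$ met rows $i,i+1,\ldots,i+n$ it would force $\mu_i < \lambda_{i+n}$, so the bound is equivalent to $\mu\subseteq\lambda$ together with $\mu_i \ge \lambda_{i+n}$ for all $i$ (with the convention that omitted parts vanish). This is exactly the asserted criterion; the underlying branching rule itself I would simply cite as standard.

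For part (2), I would invoke the double-commutant theorem for the completely reducible action of $\fgl_k$. Since $V_\lambda$ is a finite-dimensional irreducible $\fgl_{n+k}$-module, the Jacobson density (Burnside) theorem makes the structure map $\pi\colon U(\fgl_{n+k}) \to \End(V_\lambda)$ surjective. This map is $\fgl_k$-equivariant once we let $\fgl_k$ act by the adjoint action on the source and by conjugation (through $\pi$) on the target. As $\fgl_k$ is reductive and both $U(\fgl_{n+k})$ and $\End(V_\lambda)$ are locally finite and completely reducible $\fgl_k$-modules, the functor of $\fgl_k$-invariants is exact, so $\pi$ restricts to a surjection
$$U(\fgl_{n+k})^{\fgl_k} \twoheadrightarrow \End(V_\lambda)^{\fgl_k} = \End_{\fgl_k}(V_\lambda).$$
Decomposing $V_\lambda = \bigoplus_\mu M_{\lambda\mu}\otimes V_\mu$ as a $\fgl_k$-module, the commutant $\End_{\fgl_k}(V_\lambda)$ is isomorphic to $\bigoplus_\mu \End(M_{\lambda\mu})$, on which each $M_{\lambda\mu}$ is visibly irreducible; since $U(\fgl_{n+k})^{\fgl_k}$ surjects onto this commutant, each $M_{\lambda\mu}$ is irreducible as a $U(\fgl_{n+k})^{\fgl_k}$-module as well.

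I expect the only genuine subtlety to lie in part (2): one must justify that passing to $\fgl_k$-invariants preserves the surjectivity of $\pi$, which rests on complete reducibility of the adjoint action of the reductive algebra $\fgl_k$ on $U(\fgl_{n+k})$ (making invariants an exact functor) and on the identification of conjugation-invariant endomorphisms with $\fgl_k$-equivariant ones, $\End(V_\lambda)^{\fgl_k} = \End_{\fgl_k}(V_\lambda)$. Everything else — the iterated branching rule and the combinatorial existence of a skew semistandard tableau — is classical and I would merely cite it.
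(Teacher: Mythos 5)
Your proof is correct. The paper offers no argument of its own for this proposition --- it states it as well-known --- but the argument it gestures at elsewhere (Jacobson density plus passage to $\fgl_k$-invariants for the irreducibility of $M_{\lambda\mu}$, citing Molev's book) is exactly your part (2), and your part (1) is the standard iterated Gelfand--Tsetlin branching computation; so this is essentially the same approach, with the details filled in.
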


The following proposition is a refined version of Theorem 11.2, \cite{hkrw}.
\begin{proposition}
\label{c1}
For any $X \in \overline{M_{0,n+2}}$ subalgebra $\mA_{X} \subset U(\fgl_{n+k})^{\fgl_k}$ has cyclic vector on any irreducible representation of $U(\fgl_{n+k})^{\fgl_k}$ of the form $M_{\lambda \mu}$ for some integer $\fgl_{n+k}$-weight $\lambda$ and integer $\fgl_k$-weight $\mu$.
\end{proposition}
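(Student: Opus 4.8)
The plan is to deduce the statement from the cyclicity theorem for limit shift of argument subalgebras on irreducible modules stated above, applied to $\fgl_{n+k}$ in place of $\fgl_n$, by a descent argument along the branching $\fgl_k\subset\fgl_{n+k}$.

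First I would embed $\mA_X\subset U(\fgl_{n+k})^{\fgl_k}$ as a tensor factor of a genuine limit shift of argument subalgebra in $U(\fgl_{n+k})$. Recall that $\mA_X$ is obtained from the family $\mA_{(\chi,0,\ldots,0)}$ with $\chi\in\fh^{reg}$ (and $k$ trailing zeros), whose $\fgl_k$-invariance reflects the $k$-fold zero eigenvalue. Fixing a regular diagonal $\rho=\diag(\rho_1,\ldots,\rho_k)$ in $\fgl_k$ with distinct nonzero entries and ``resolving'' the zero-cluster, I consider the subalgebras $\mA_{(\chi,t\rho)}\subset U(\fgl_{n+k})$, which are nondegenerate for small $t\ne0$; their limit as $t\to0$ is, by the explicit structure of simple limits (Propositions~\ref{size}, \ref{ustr} and Theorem~\ref{t2}),
$$\mA_{\hat X}\;=\;\mA_X\otimes_{ZU(\fgl_k)}\mA_\rho,$$
where $\hat X\in\overline{M_{0,n+k+1}}$ is the corresponding degenerate curve, $\mA_X\subset U(\fgl_{n+k})^{\fgl_k}$ is our subalgebra, and $\mA_\rho\subset U(\fgl_k)$ is the regular shift of argument subalgebra of the $\fgl_k$-block. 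For boundary $X$ one reaches $\hat X$ by iterating simple limits, which is compatible with this decomposition.

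Next I would apply the cyclicity theorem above to $\fgl_{n+k}$ and the point $\hat X\in\overline{M_{0,n+k+1}}$: the algebra $\mA_{\hat X}$ acts with a cyclic vector $\omega$ on the irreducible module $V_\lambda$. Decomposing $V_\lambda=\bigoplus_\mu M_{\lambda\mu}\otimes V_\mu$ over $\fgl_k$, the factor $\mA_X$ (commuting with $\fgl_k$) acts through the multiplicity spaces $M_{\lambda\mu}$ only, while $\mA_\rho$ acts on the $V_\mu$. I then descend cyclicity to a single multiplicity space in two steps. Since distinct dominant $\fgl_k$-weights $\mu$ have distinct central characters, the central subalgebra $ZU(\fgl_k)\subset\mA_X$ separates the isotypic blocks, so the projector $\Pi_\mu$ onto $M_{\lambda\mu}\otimes V_\mu$ is a polynomial in $\mA_X$ and commutes with the $\mA_{\hat X}$-action; hence $\omega_\mu:=\Pi_\mu\omega$ is cyclic for $\mA_{\hat X}$ on $M_{\lambda\mu}\otimes V_\mu$. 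Moreover, because $\rho$ is regular, $\mA_\rho$ has simple spectrum on $V_\mu$ by \cite{ffr}, so $V_\mu$ has a basis of $\mA_\rho$-eigenlines $w_1,\ldots,w_d$ with pairwise distinct eigenvalues; writing $\omega_\mu=\sum_i m_i\otimes w_i$ and using interpolation in $\mA_\rho$ to obtain the projector $\pi_i$ onto $M_{\lambda\mu}\otimes w_i$, which again commutes with $\mA_X$, I get $\mA_X m_i\otimes w_i=\pi_i(\mA_{\hat X}\omega_\mu)=M_{\lambda\mu}\otimes w_i$, so each $m_i$ is a cyclic vector for $\mA_X$ on $M_{\lambda\mu}$.

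The main obstacle is the descent step, where one must pin down the exact tensor decomposition $\mA_{\hat X}=\mA_X\otimes_{ZU(\fgl_k)}\mA_\rho$ (so that $\mA_X$ genuinely acts only through the multiplicity factors) and verify the two separation inputs: the separation of the blocks $M_{\lambda\mu}\otimes V_\mu$ by the central characters of $ZU(\fgl_k)$, and the simple spectrum of the regular subalgebra $\mA_\rho$ on each $V_\mu$. Granting these, cyclicity of $\mA_{\hat X}$ on $V_\lambda$ passes to each $M_{\lambda\mu}$, which is the assertion; note that this covers all $X\in\overline{M_{0,n+2}}$ at once, since the quoted theorem for $\fgl_{n+k}$ already holds for every degenerate $\hat X$.
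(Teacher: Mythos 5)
Your proposal is correct, but it takes a genuinely different route from the paper. The paper's proof is a two-step reduction: for $X \in T^{reg}$ it invokes Theorem 11.2 of \cite{hkrw} directly (that theorem is precisely the multiplicity-space cyclicity statement for nondegenerate parameters), and for degenerate $X$ it runs an induction on the number of irreducible components of the curve, using the tensor decomposition of Proposition \ref{ustr}. You instead never touch the multiplicity-space result of \cite{hkrw} and never induct over components of $X$: you attach a regular bubble at the marked point $0$, so that $\mA_X \otimes_{ZU(\fgl_k)} \mA_\rho$ is realized as a genuine limit shift of argument subalgebra $\mA_{\hat X} \subset U(\fgl_{n+k})$ (this is exactly what Propositions \ref{size}, \ref{ustr} and Theorem \ref{t2} provide, and it holds uniformly for all $X$, degenerate or not), then quote cyclicity of $\mA_{\hat X}$ on the irreducible module $V_\lambda$, and finally descend to $M_{\lambda\mu}$ by separating isotypic blocks with central characters of $ZU(\fgl_k)$ and splitting off eigenlines of $\mA_\rho$ by Lagrange interpolation. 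The descent mechanics (the projectors $\Pi_\mu$ and $\pi_i$ commute with $\mA_X$, and nonvanishing of the components $m_i$ is forced by cyclicity) are sound. What your argument buys is a uniform treatment of all $X \in \overline{M_{0,n+2}}$ at once, at the cost of needing the cyclicity theorem on irreducibles for \emph{all} degenerate parameters in $\overline{M_{0,n+k+1}}$ (which the paper does state) plus the simple-spectrum input from \cite{ffr}; the paper's route is shorter given \cite{hkrw} but leaves the gluing argument implicit in its induction. One small point of hygiene: to apply \cite{ffr} for simple spectrum of $\mA_\rho$ on each $V_\mu$ without any genericity caveat, you should take $\rho$ to be \emph{real} regular (which you are free to do), since that is the case where diagonalizability via the Hermitian property is unconditional.
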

\begin{proof}
For $X \in T^{reg}$ the statement is a consequence of Theorem 11.2 \cite{hkrw}.
For other $X$ the statement of the  Proposition follows from Proposition \ref{ustr} using the induction on the number of irreducible components of the curve.
\end{proof}

\begin{proposition}
\label{c2}
For any $X \in \overline{M_{0,n+2}^{split}}$ subalgebra $\mA_{X} \subset U(\fgl_{n+k})^{\fgl_k}$ acts with simple spectrum on any irreducible representation of $U(\fgl_{n+k})^{\fgl_k}$ of the form $M_{\lambda \mu}$ for some integer $\fgl_{n+k}$-weight $\lambda$ and some integer $\fgl_k$-weight $\mu$.
\end{proposition}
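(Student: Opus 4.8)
The plan is to deduce simple spectrum from the standard equivalence: for a commutative algebra acting on a finite-dimensional space, the joint spectrum is simple if and only if the algebra both has a cyclic vector and acts semisimply. The cyclic vector is already provided by Proposition \ref{c1} for every $X \in \overline{M_{0,n+2}}$, in particular for $X \in \overline{M_{0,n+2}^{split}}$. Hence the entire content of the statement reduces to proving that $\mA_X$ acts \emph{semisimply} on $M_{\lambda \mu}$ when $X$ lies in the split real form. To establish this I would exhibit a positive-definite Hermitian form on $M_{\lambda\mu}$ with respect to which every element of $\mA_X$ acts by a self-adjoint (hence normal) operator; commuting normal operators are simultaneously diagonalizable, which is exactly semisimplicity of the commutative algebra they generate.

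The form I would use is the contravariant (Shapovalov) Hermitian form on the irreducible $\fgl_{n+k}$-module $V_\lambda$, which is positive definite and satisfies $\langle xv,w\rangle=\langle v,x^{\dagger}w\rangle$ for the antilinear anti-involution $\dagger$ of $U(\fgl_{n+k})$ determined by $E_{ij}^{\dagger}=E_{ji}$. Since $\fgl_k\subset\fgl_{n+k}$ acts by $\dagger$-skew operators, the $\fgl_k$-isotypic decomposition of $V_\lambda$ is orthogonal and the factor $M_{\lambda\mu}\otimes V_\mu$ carries a tensor product of two positive-definite forms; this equips the multiplicity space $M_{\lambda\mu}=\Hom_{\fgl_k}(V_\mu,V_\lambda)$ with a canonical positive-definite Hermitian form. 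Any $\fgl_k$-invariant self-adjoint operator on $V_\lambda$ preserves this component and, by Schur's lemma, acts as $A\otimes\id_{V_\mu}$ with $A$ self-adjoint on $M_{\lambda\mu}$. Thus it suffices to prove that the generators of $\mA_X\subset U(\fgl_{n+k})^{\fgl_k}$ are $\dagger$-invariant.

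I would treat the regular case first. For $\chi\in\fh^{reg}$ real, the symmetrized elements $\sym(\partial^{c}_{\chi}P_i)$ generate $\mA_{(\chi,0,\ldots,0)}$, and they are $\dagger$-invariant: the invariants $P_i$ (traces of powers, or coefficients of the characteristic polynomial) are fixed by $\dagger$ after symmetrization, while $\chi^{\dagger}=\chi$ because $\chi$ is a real diagonal matrix, so $\partial_\chi$ commutes with $\dagger$. Hence for real regular $\chi$ the algebra $\mA_{(\chi,0,\ldots,0)}$ acts by self-adjoint operators on $V_\lambda$, therefore semisimply on $M_{\lambda\mu}$; combined with the cyclic vector this already yields simple spectrum. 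For a degenerate $X\in\overline{M_{0,n+2}^{split}}$ I would pass to the limit \emph{within the split locus}: approaching $X$ along real regular diagonal matrices $\chi$, every operator in the image of $\mA_X$ in $\End(M_{\lambda\mu})$ arises as a limit of self-adjoint operators (images of elements of $\mA_{(\chi,0,\ldots,0)}$), and the self-adjoint operators form a closed real-linear subspace, so the image of $\mA_X$ again consists of self-adjoint operators. Equivalently, one can invoke the explicit product description of Proposition \ref{ustr}: each tensor factor is a real shift-of-argument subalgebra whose generators are self-adjoint by the computation above, and a product of pairwise commuting self-adjoint operators is again self-adjoint.

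The step I expect to be the main obstacle is controlling the degeneration. One must ensure that the canonical limit lifting $\mA_X$ supplied by the uniqueness statements (the analogues of Theorem \ref{t2} and Proposition \ref{liftt}) genuinely coincides with the limit of the self-adjoint family, and that the limit is taken so that all eigenvalue parameters remain real and the contravariant form stays positive definite. The clean way around this is to combine uniqueness of the lifting with the closedness of self-adjointness: the self-adjoint limit produces a commutative lifting of $\overline{\mA_X}$ inside $U(\fgl_{n+k})^{\fgl_k}$, so it must be the unique lifting $\mA_X$, whence $\mA_X$ is self-adjoint and semisimple. The remaining bookkeeping — that the symmetrized generators, and not merely their principal symbols, are $\dagger$-fixed, and that the Schur descent to $M_{\lambda\mu}$ is compatible with taking limits — is routine.
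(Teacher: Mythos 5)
Your proposal is correct in substance, but it takes a genuinely different route from the paper's. The paper treats $X \in T^{reg}_{split}$ by realizing $\mA_X$ as $\mA_{X_0} \cap U(\fgl_{n+k})^{\fgl_k}$ for a suitable limit point $X_0$ of the shift-of-argument family of the \emph{ambient} algebra $U(\fgl_{n+k})$, and then citing \cite{ffr} for the fact that $\mA_{X_0}$ acts by Hermitian operators with respect to \emph{some} positive definite Hermitian form; degenerate $X$ are then handled by induction on the number of irreducible components of the curve via the tensor-product decomposition of Proposition \ref{ustr}. You instead prove the Hermitian property from scratch: the contravariant form with $E_{ij}^{\dagger}=E_{ji}$, $\dagger$-stability of the classical algebra (derivatives of transpose-fixed invariants along a real diagonal direction), uniqueness of liftings, Schur descent to $M_{\lambda\mu}$, and closedness of self-adjointness under limits. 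Your version is more self-contained (it does not invoke the Hermitian result of \cite{ffr}, and it makes explicit the continuity-at-the-limit step that the paper's citation of \cite{ffr} at a limit point $X_0$ implicitly relies on); the paper's version is shorter, and its induction via Proposition \ref{ustr} avoids discussing convergence of the quantum family altogether.

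One concrete caveat: your primary argument for the ``regular'' case does not work as stated, because the relevant parameter is $(\chi,0,\dots,0)\in\fgl_{n+k}$, which is \emph{not} regular even when $\chi\in\fh^{reg}$ (its centralizer contains $\fgl_k$). The assertion that the symmetrized elements $\sym\bigl(\partial^{c}_{(\chi,0,\dots,0)}P_i\bigr)$ generate the quantum algebra is available, in this paper and in the literature it cites, only for regular parameters, so you cannot verify $\dagger$-invariance ``generator by generator'' in this family. The repair is exactly the fallback you yourself propose, and it must be used for \emph{all} $X$, not only the degenerate ones: $\dagger$ preserves $U(\fgl_{n+k})^{\fgl_k}$, sends any commutative lifting of the manifestly $\dagger$-stable classical algebra $\overline{\mA_{X}}\subset S(\fgl_{n+k})^{\fgl_k}$ to another such lifting, and the lifting is unique (the paper's refinement of Proposition \ref{liftt} for the centralizer family); hence $\mA_X^{\dagger}=\mA_X$. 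Since a commutative $\dagger$-stable algebra acts by normal, hence simultaneously diagonalizable, operators, semisimplicity follows, and together with the cyclic vector from Proposition \ref{c1} this gives simple spectrum. With that substitution your proof is complete.
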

\begin{proof}
For $X \in T^{reg}_{split}$ we know that there is a cyclic vector on such representation. Moreover $\mA_X = \mA_{X_0} \cap U(\fgl_{n+k})^{\fgl_k} \subset U(\fgl_{n+k})$ for some $X_0 \in \overline{M_{0,n+k+2}}$. We know \cite{ffr} that $\mA_{X_0}$ acts by Hermitian operators with respect to \emph{some} positive definite Hermitian form. This implies that $\mA_X$
acts by Hermitian operators as well with respect to restriction of this form to the multiplicity space. So  the statement follows.

For other $X \in \overline{M_{0,n+2}^{split}}$, the statement of the Proposition follows from Proposition \ref{ustr} using the induction on the number of irreducible components of the curve.
\end{proof}

\section{Cyclic vector}
\label{cyclic}
\subsection{General conjecture.} Let $X \in \overline{M_{0,n+2}}$ and consider $B(X) \subset Y(\fgl_n)$. 
Let $V_{\lambda_i \setminus \mu_i}(z_i), i =1, \ldots, k$ be a set of skew representations of $Y(\fgl_n)$. Consider the representation $W = \bigotimes_{i=1}^k V_{\lambda_i \setminus \mu_i}(z_i)$ such that $z_i - z_j \not \in \bz$.


\begin{conj}
\label{conj}
$B(X)$ has a cyclic vector in $W$  for all $X \in \overline{M_{0,n+2}}$.
\end{conj}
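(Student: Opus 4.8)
The plan is to prove cyclicity at \emph{every} point of $\overline{M_{0,n+2}}$ for each tame $z$ by combining a global completeness reformulation with a reduction, along the strata of the Deligne--Mumford space, to the open stratum. First I would reformulate the statement over the whole parameter space. The family $B(X)$ is flat over the complete variety $\overline{M_{0,n+2}}$ and acts on the fixed space $W(z)=\bigotimes_i V_{\lambda_i\setminus\mu_i}(z_i)$, whose structure constants depend algebraically on $z$. Hence the locus $\mathcal B\subset\overline{M_{0,n+2}}\times\bc^k$ where $B(X)$ fails to be cyclic on $W(z)$ is Zariski-closed, and since the projection $\overline{M_{0,n+2}}\times\bc^k\to\bc^k$ is proper, its image $D:=\mathrm{pr}_{\bc^k}(\mathcal B)$ is Zariski-closed in $\bc^k$. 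By Theorem~A the set $D$ is a \emph{proper} closed subset. Conjecture~\ref{conj} is then equivalent to the inclusion of $D$ into the \emph{resonance locus} $\bigcup_{i<j,\,m\in\bz}\{z_i-z_j=m\}$; equivalently, every irreducible component of $D$ must lie in one of the resonance hyperplanes, since a closed subset meets the tame locus exactly when it has a component avoiding all resonances.

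Second, for fixed tame $z$ I would reduce cyclicity at an arbitrary $X$ to the case $X=C\in T^{reg}$. The cyclic locus in $X$ is Zariski-open, so it suffices to verify cyclicity at each boundary point directly. At a boundary point the limit structure of Theorems~\ref{result1} and~\ref{result2} writes $B(X)$ as the tensor product $\psi_{n-k_0}(B(X_0))\otimes i_{k_0}(B(C))\otimes\bigotimes_{\alpha\ne0}\mA_{X_\alpha}$ (over the relevant centers), whose outer factors are shift-of-argument subalgebras $\mA_{X_\alpha}$ acting on multiplicity-type subquotients. Using that the chain of nested (sub-)Yangians acts on a tame $W(z)$ through a Gelfand--Tsetlin-compatible branching, together with the cyclicity of every $\mA_X$ on any multiplicity space $M_{\lambda\mu}$ furnished by Proposition~\ref{c1}, an induction on the number of irreducible components of the curve --- run exactly as in the proof of Proposition~\ref{c1} --- should reduce cyclicity at $X$ to cyclicity of the single genuine Bethe factor attached to $X_\infty$. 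In the caterpillar case this factor is trivial and all pieces are covered by Proposition~\ref{c1}, recovering the Nazarov--Tarasov multiplicity-freeness of the Gelfand--Tsetlin subalgebra \cite{nt}; the only configuration not handled by this induction is the open stratum, where $X_\infty$ is a nondegenerate $\bc\bp^1$ and the factor is $B(C)$ with $C\in T^{reg}$.

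Third comes the base case and the main obstacle: one must show $B(C)$ is cyclic on $W(z)$ for every $C\in T^{reg}$ and every \emph{tame} $z$. The decisive difference from Theorem~A is that one can no longer specialize to integer $z$ with large gaps, since such a point leaves the tame locus, so the realization of $W$ as a single skew module followed by $\eta_k$ is unavailable. I would instead route through the real forms. For $C$ in the regular compact torus, the construction underlying Theorem~B provides a Shapovalov-type Hermitian form for which $B(C)$ acts by a commuting family of normal operators; cyclicity is then equivalent to simplicity of the joint spectrum, and the task becomes showing that the spectral cover of $\overline{M_{0,n+2}^{comp}}$ acquires no collisions over the tame locus, i.e.\ that its discriminant, as a function of $z$, vanishes only along resonance hyperplanes. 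I would try to establish this by continuation from the caterpillar point, where the eigenlines are the Gelfand--Tsetlin lines, pairwise distinct for every tame $z$ by \cite{nt}, tracking them along paths in the connected tame locus and excluding crossings. The split torus handles $C$ via Theorem~C, and general complex $C$ follows by interpolation, cyclicity being open and Jordan degenerations being confined off the real forms.

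The hard part will be precisely this last step: upgrading the \emph{generic}-$z$ simplicity of Theorems~B and~C, valid on a Zariski-open set of deformation parameters, to a \emph{no-collision-off-resonances} statement valid throughout the tame locus. In the global picture of the first paragraph this is the same as proving that each component of the bad locus $D$ is a resonance hyperplane $\{z_i-z_j=m\}$. The natural mechanism is to match a collision of eigenvalues of $B(C)$ with a failure of irreducibility of the tame tensor product, which for skew Yangian modules occurs exactly at integer resonances; making this precise --- relating the discriminant of the Bethe spectral cover to the reducibility divisor of $\bigotimes_i V_{\lambda_i\setminus\mu_i}(z_i)$ --- is where I expect the genuine difficulty to lie.
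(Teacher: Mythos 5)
You should first be clear about the status of what you set out to prove: this statement is an open conjecture in the paper, and the paper itself proves only the generic-$z$ case (Theorem~A, i.e.\ Theorem~\ref{main1}), by specializing to nonpositive integers $z_i$ with $|z_i-z_j|\gg 0$ so that $W$ becomes a single skew module (Proposition~\ref{skew}), pushing $B(X)$ through the centralizer construction onto the limit shift-of-argument subalgebra $\mA_X\subset U(\fgl_{n+k})^{\fgl_k}$ (Proposition~\ref{isomor}), invoking cyclicity of $\mA_X$ on multiplicity spaces for \emph{all} $X$ simultaneously (Proposition~\ref{c1}, from [HKRW]), and then using completeness of $\overline{M_{0,n+2}}$ to spread the conclusion over a Zariski-open set of parameters. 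Your first paragraph is a correct reformulation --- the conjecture is equivalent to every irreducible component of the bad locus $D$ lying in a single resonance hyperplane $\{z_i-z_j=m\}$ (the irreducibility-versus-countable-union argument is fine over $\bc$) --- but it takes Theorem~A as input rather than reproving it, so all the substance of the conjecture sits in your third step, and that step does not go through.

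Concretely, three things fail there. First, the normality input you want to use is much narrower than you assume: Proposition~\ref{pr:normal} together with Proposition~\ref{semisimp} produces a Hermitian form making $B(C)$ normal \emph{only} when every tensor factor is a Kirillov--Reshetikhin module $V_{l\omega_r}(z)$ with evaluation parameter on the specific line $\mathrm{Re}\,z=\frac{l-r}{2}$ (the condition $P^2=aP$ forces a rectangular diagram), so for a general skew factor $V_{\lambda\setminus\mu}(z)$ at general complex tame $z$ there is no such form and the equivalence ``cyclic $\Leftrightarrow$ simple spectrum'' is unavailable; your closing appeal to ``interpolation'' from the real forms only yields genericity again, which is Theorem~A, not the conjecture. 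Second, even on the real slices, Theorems~\ref{th:compact} and~\ref{th:split} give simplicity only for $(x_1,\ldots,x_k)$ in a Zariski-open subset of $\br^k$, and your proposed continuation from the caterpillar point supplies no mechanism excluding eigenvalue crossings along a path: multiplicity-freeness of the Gelfand--Tsetlin algebra at the caterpillar point does not propagate to interior points of the cover, and off the real forms the operators may be non-semisimple, so the cover may be ramified. Third, the hoped-for identity between the discriminant of the spectral cover and the reducibility divisor of $\bigotimes_i V_{\lambda_i\setminus\mu_i}(z_i)$ is unsupported in either direction: cyclicity is the statement that the image of $B(X)$ in $\End(W)$ has dimension $\dim W$, which can fail on an irreducible module, and eigenvalue collisions need not be caused by reducibility (the paper even notes the conjecture is meant to include some reducible $W$, cf.\ the discussion of [MTV14]). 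A smaller but real issue is your stratum induction: at a boundary point the factors $\mA_{X_\alpha}$ and $\psi_{n-k_0}(B(X_0))$ act on multiplicity spaces of the restriction of the tame module $W$ to smaller subalgebras, and these multiplicity spaces are not themselves tensor products of skew modules, so the induction of Proposition~\ref{c1} (which lives in $U(\fgl_{n+k})^{\fgl_k}$, where the module is a single $M_{\lambda\mu}$ and Proposition~\ref{ustr} applies) does not transfer; the paper avoids this precisely by first collapsing $W$ to one skew module. In sum, what you have is an accurate map of the known results plus a research program; the decisive step you flag as ``the genuine difficulty'' is exactly what remains open, so the proposal does not prove the statement.
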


The following observations support the conjecture. Note that in the conjecture we do not suppose that representation $W$ is irreducible.

\begin{enumerate}
    \item Gelfand-Tsetlin subalgebra $H$ is $B(X)$ for $X \in \overline{M_{0,n+2}}$ being the so-called caterpillar curve.
\begin{center}
\includegraphics[scale=0.75]{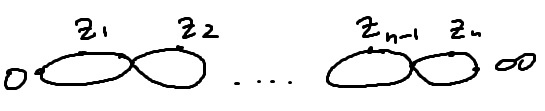}
\end{center}    
It is known that if $z_i - z_j \not \in \bz$ then $H$ acts with simple spectrum on $W$.

    \item From the geometric point of view there is an action of $Y(\fgl_n)$ on the equivariant cohomology of quivier varieties of type A. The direct sum of cohomology spaces is the tensor product of fundamental Yangian representations with the equivariant parmeters being the evaluation parameters. There is a conjecture of \cite{mo}  that $B(C), C \in T^{reg}$ acts by the operators of quantum multiplication by cohomology classes thus has the unity class as a cyclic vector;
    
    \item The $Y(\fgl_2)$-case of this conjecture is proved in \cite{mash};
    
    \item In \cite{mtv} the authors consider representation of the form $\bigotimes_{i=1}^k \bc^n(z_i)$ and prove that $B(C), C \in T^{reg}$ has a cyclic vector on it if $z_i \not = z_j + 1$ for $i>j$. Note that it includes some cases where representation is not irreducible.

\end{enumerate}

In the present paper we prove the conjecture for {\it generic} values of the parameters $z_i$. 

\subsection{Tensor product of skew representations}

We are going to prove that Bethe subalgebra of $Y(\fgl_n)$ acts with a cyclic vector on any tensor product skew representation with generic values of the evaluation parameters. From Schur lemma and Proposition \ref{tensor2} it follows that the restriction of a skew representation to $Y(\fsl_n)$ is an irreducible representation of $Y(\fsl_n)$ and that existence of a cyclic vector for a Bethe subalgebra of $Y(\fsl_n)$ implies the existence of a cyclic vector for the corresponding Bethe subalgebra of $Y(\fgl_n)$ in the original representation.

Irreducible representations of $Y(\fsl_n)$ are classified by means of {\em Drinfeld polynomials}, see e.g. \cite[Paragraph 3]{M}. 
The following Proposition describes Drinfeld polynomials of a skew representation. 
Let $\lambda \setminus \mu$ be a skew diagram such that $M_{\lambda \mu} \not = 0$. For any cell $\alpha \in \lambda \setminus \mu$ let $c(\alpha)$ be its content, i.e. $c(\alpha) = j - i$ if $\alpha$ belongs to $i$-th row and $j$-th column.
\begin{proposition}(\cite[Corollary 8.5.5]{M})
\label{poly}
Drinfeld polynomials $P_1(u), \ldots, P_{n-1}(u)$ of $V_{\lambda \setminus \mu}$ as $Y(\fsl_n)$ representation are $$P_k(u) = \prod_{\alpha}(u + c(\alpha)),$$
where the product is over all $\alpha$ be the upper cells in columns of height $k$ in $\lambda / \mu$.
\end{proposition}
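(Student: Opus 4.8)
The plan is to identify the highest weight of the skew module $V_{\lambda\setminus\mu}$ as a $Y(\fgl_n)$-module and then read off the Drinfeld polynomials from the standard highest-weight criterion. Recall the description of finite-dimensional irreducibles of $Y(\fsl_n)$: if $\xi$ is a highest weight vector of a $Y(\fgl_n)$-module, so that $t_{ij}(u)\xi=0$ for $i<j$ and $t_{ii}(u)\xi=\lambda_i(u)\xi$, then the Drinfeld polynomials $P_1(u),\ldots,P_{n-1}(u)$ of the associated $Y(\fsl_n)$-module are the unique monic polynomials satisfying
$$\frac{\lambda_i(u)}{\lambda_{i+1}(u)}=\frac{P_i(u+1)}{P_i(u)},\qquad 1\le i\le n-1.$$
Since every skew module is irreducible, it has a well-defined highest weight series $(\lambda_1(u),\ldots,\lambda_n(u))$, and it suffices to compute this series and match the ratios against the claimed products.

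The core step is the computation of $\lambda_i(u)$. I would realize the highest weight vector of $M_{\lambda\mu}$ inside the $\fgl_{n+k}$-module $V_\lambda$ as a distinguished Gelfand--Tsetlin basis vector, and evaluate the generators $\eta_k(t_{ii}(u))$ — equivalently the leading diagonal quantum minors — on it, using the description of $\mathrm{Im}\,\eta_k$ by Capelli-type determinants in $U(\fgl_{n+k})^{\fgl_k}$ coming from the centralizer construction (Theorem~\ref{qasym} and the formula $\Phi_k=\ev\circ\omega_{n+k}\circ i_k$). The structural output I expect is that $\lambda_i(u)$ factorizes over the columns of $\lambda/\mu$: a column of height $h$ whose upper cell has content $c_0$ contributes precisely the factor that the fundamental evaluation module $\bigwedge^{h}\bC$, normalized so that its Drinfeld polynomial is $u+c_0$, contributes to $\lambda_i(u)$ — namely $\tfrac{u+c_0+1}{u+c_0}$ for $i\le h$ and $1$ for $i>h$. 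Note that the constraint $\mu_i\ge\lambda_{i+n}$ from Proposition~\ref{repr} forces every column of $\lambda/\mu$ to have height at most $n$, so only heights $1,\ldots,n$ occur and heights $1,\ldots,n-1$ are exactly those seen by $P_1,\ldots,P_{n-1}$.

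Granting this column factorization, the conclusion is purely formal. For a column of height $h$ the $i$-th and $(i+1)$-th factors of its contribution coincide unless $h=i$, so only columns of height exactly $i$ survive in the ratio $\lambda_i(u)/\lambda_{i+1}(u)$, each contributing $\tfrac{u+c_0+1}{u+c_0}$ with $c_0$ the content of its upper cell. Hence
$$\frac{\lambda_i(u)}{\lambda_{i+1}(u)}=\prod_{\alpha}\frac{u+c(\alpha)+1}{u+c(\alpha)}=\frac{P_i(u+1)}{P_i(u)},\qquad P_i(u)=\prod_{\alpha}\bigl(u+c(\alpha)\bigr),$$
the product running over the upper cells $\alpha$ of the height-$i$ columns, which is exactly the assertion. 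As a sanity check, for $\mu=\emptyset$ (the evaluation module $V_\lambda$) the height-$i$ columns are those numbered $\lambda_{i+1}+1,\ldots,\lambda_i$ with upper cells of contents $\lambda_{i+1},\ldots,\lambda_i-1$, recovering the classical Drinfeld polynomials $P_i(u)=\prod_{a=\lambda_{i+1}}^{\lambda_i-1}(u+a)$.

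The hard part will be the explicit eigenvalue computation of the second paragraph: one must pin down the action of the centralizer-construction image of the diagonal quantum minors on the distinguished highest weight vector and verify that the resulting product of contents organizes column by column rather than merely row by row (the precise bookkeeping is delicate because the $n$ diagonal directions of the $Y(\fgl_n)$-action do not correspond directly to the up to $n+k$ rows of $\lambda/\mu$). This is where the combinatorics of the Gelfand--Tsetlin basis of the skew module and the Capelli determinant formulas must be used in earnest. A cleaner but essentially equivalent route would be to exhibit $V_{\lambda\setminus\mu}$ as the irreducible module generated by the product of highest weight vectors inside the tensor product $\bigotimes_{c}\bigwedge^{h_c}\bC(z_c)$ taken over the columns $c$, so that multiplicativity of highest weights under comultiplication yields the column factorization at once; the difficulty then migrates to identifying this tensor-product construction with the skew module itself.
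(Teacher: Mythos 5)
The first thing to note is that the paper contains no proof of this statement: Proposition \ref{poly} is imported verbatim from Molev's book, as the citation \cite[Corollary 8.5.5]{M} indicates, so the only meaningful comparison is with the proof given there. Your overall route is the same as that of the cited source: compute the $Y(\fgl_n)$-highest weight $(\lambda_1(u),\ldots,\lambda_n(u))$ of the skew module and recover the Drinfeld polynomials from the ratio criterion $\lambda_i(u)/\lambda_{i+1}(u)=P_i(u+1)/P_i(u)$. Your purely formal steps are all correct: the telescoping argument by which only columns of height exactly $i$ survive in the $i$-th ratio, the remark that Proposition \ref{repr} bounds all column heights by $n$, and the sanity check against the evaluation module $V_\lambda$ (where the column product telescopes to $(u+\lambda_i)/u$) are accurate and consistent with the paper's conventions.

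The gap is that everything of substance is concentrated in the step you explicitly label as ``expected'', namely the column factorization
\begin{equation*}
\lambda_i(u)\ \doteq\ \prod_{j\,:\,h_j\ge i}\frac{u+c_j+1}{u+c_j}
\end{equation*}
(valid only up to a common scalar series, which is harmless for Drinfeld polynomials but means the exact equality you state is not quite what one should aim for), where $h_j$ is the height of the $j$-th column of $\lambda\setminus\mu$ and $c_j$ is the content of its top cell. Neither of your two proposed routes to this factorization is carried out, and both are genuine theorems, not routine verifications: the first is the computation of the eigenvalues of the images under $\eta_k$ of the quantum minors $t^{1\ldots i}_{1\ldots i}(u)$ on a distinguished Gelfand--Tsetlin vector, which is what \cite[Section 8.5]{M} actually does (via the quantum Sylvester theorem); the second --- realizing $V_{\lambda\setminus\mu}$ as the irreducible constituent of $\bigotimes_{c}\bigwedge^{h_c}\bc^n(z_c)$ generated by the product of highest weight vectors --- is essentially Cherednik's theorem, established in the same circle of ideas rather than available for free. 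Note also that the bookkeeping you defer is precisely where the shifts matter: what the Gelfand--Tsetlin/quantum-minor computation produces naturally is the eigenvalue of $t^{1\ldots i}_{1\ldots i}(u)$, i.e.\ the row-indexed shifted product $\lambda_1(u)\lambda_2(u-1)\cdots\lambda_i(u-i+1)$, and passing from that to the unshifted column-indexed product above is a nontrivial (if elementary) rearrangement. So nothing in your plan would fail, but as written it is a correct reduction of the cited result to its main lemma, not a proof of it.
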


\begin{proposition}
\label{skew}
A representation of the form $\bigotimes_{i=1}^k V_{\lambda_i \setminus \mu_i}(z_i)$ with $|z_i - z_j| \gg 0$ and non-positive integers $z_i$'s is a skew representation of $Y(\fsl_n)$.
\end{proposition}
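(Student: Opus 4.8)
The plan is to identify $W=\bigotimes_{i=1}^k V_{\lambda_i\setminus\mu_i}(z_i)$ with a single skew module by matching Drinfeld polynomials. Since each $V_{\lambda_i\setminus\mu_i}(z_i)$ restricts to an irreducible $Y(\fsl_n)$-module, $Y(\fsl_n)$ is a Hopf subalgebra (Proposition \ref{tensor2}), and finite-dimensional irreducible $Y(\fsl_n)$-modules are determined up to isomorphism by their Drinfeld polynomials, it suffices to: (i) check that $W$ is irreducible over $Y(\fsl_n)$ when $|z_i-z_j|\gg0$, and (ii) produce a single skew diagram $\Lambda\setminus M$ with all columns of height $\le n$ whose Drinfeld polynomials coincide with those of $W$.

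For (i) I would use that the tensor of the highest-weight vectors is again a highest-weight vector whose weight is the product of the individual highest weights; hence the Drinfeld polynomials attached to this weight are the products $\prod_{i=1}^k P^{(i)}_m(u-z_i)$, where the $P^{(i)}_m$ are given by Proposition \ref{poly} and the shift $u\mapsto u-z_i$ comes from $\tau_{z_i}\colon T(u)\mapsto T(u-z_i)$. By Proposition \ref{poly} the roots of $P^{(i)}_m(u-z_i)$ are the numbers $z_i-c(\alpha)$, with $\alpha$ ranging over the top cells of the columns of height $m$ in $\lambda_i\setminus\mu_i$; as all contents $c(\alpha)$ lie in a fixed bounded set depending only on the diagrams, taking $|z_i-z_j|$ larger than the diameter of that set keeps the root clusters of distinct factors disjoint. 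Standard irreducibility criteria for tensor products of skew (more generally tame) modules, cf.\ \cite{nt,M}, then apply: the reducibility locus is contained in finitely many ``resonance'' conditions $z_i-z_j\in\{c:|c|\le\mathrm{const}\}$, which are avoided for well-separated parameters, so $W$ is irreducible and the products above are its Drinfeld polynomials.

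For (ii) I would build $\Lambda\setminus M$ by stacking the diagrams diagonally. Order the factors so that $z_1>z_2>\cdots>z_k$ and place an integral translate of $\lambda_i\setminus\mu_i$ so that every cell of $\lambda_{i+1}\setminus\mu_{i+1}$ lies strictly above and strictly to the right of every cell of $\lambda_i\setminus\mu_i$; because the $z_i$ are integers with large gaps there is room to choose these translations so that the content of each cell of $\lambda_i\setminus\mu_i$ is raised by exactly $-z_i$. Going from small to large row index the occupied columns only decrease, so both the inner and the outer boundary are weakly decreasing and $M\subseteq\Lambda$ is a genuine skew diagram; since the column-ranges of distinct blocks are disjoint, no columns merge, the heights (all $\le n$) are preserved, and every column of height $m$ in $\Lambda\setminus M$ is a column of some $\lambda_i\setminus\mu_i$ whose top cell has content $c(\alpha)-z_i$. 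By Proposition \ref{poly} this gives $P^{\Lambda\setminus M}_m(u)=\prod_{i=1}^k P^{(i)}_m(u-z_i)$, matching step (i). As $\Lambda\setminus M$ has all columns of height $\le n$ it is the diagram of a skew $Y(\fgl_n)$-module (Proposition \ref{repr}), and comparing Drinfeld polynomials with the irreducible $W$ yields $W\cong V_{\Lambda\setminus M}$ over $Y(\fsl_n)$, as required.

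The main obstacle is the bookkeeping in (ii): one must check \emph{simultaneously} that the diagonal stacking is a legitimate skew shape (both boundaries monotone, $M\subseteq\Lambda$), that no two columns from different blocks share a column index (so that column heights, hence the ``height $m$'' statistics governing $P_m$, are preserved), and that the prescribed content shifts $-z_i$ are realizable by honest integral translations. All of these rely on $|z_i-z_j|\gg0$, with the integrality and non-positivity of the $z_i$ ensuring the shifts are realized by an integral placement producing honest partitions. The irreducibility input in (i), although standard, is the other delicate point, since it is precisely what upgrades the equality of Drinfeld polynomials to an isomorphism of modules.
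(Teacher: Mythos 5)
Your proposal is correct and follows essentially the same route as the paper's own proof: irreducibility of the tensor product for well-separated parameters, multiplicativity of Drinfeld polynomials over irreducible tensor products, and a diagonal (disjoint, content-shifted) stacking of the individual skew diagrams into one big skew diagram realizing the product polynomial. Your version merely makes explicit the bookkeeping (the shift $u\mapsto u-z_i$ of roots, the legitimacy of the stacked shape, and the column-height $\le n$ condition via Proposition~\ref{repr}) that the paper compresses into the phrase ``rows' and columns' numbers of the components are sufficiently distant from each other.''
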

\begin{proof}
It is enough to choose skew diagram $\lambda \setminus \mu $ such that $M_{\lambda \mu} \not = 0$ and Drinfeld polynomials of $V_{\lambda \setminus \mu}$ coincides with Drinfeld polynomials of  $\bigotimes_{i=1}^k V_{\lambda_i \setminus \mu_i}(z_i)$.
The following facts are well-known, see \cite{M}. \\ 1) If the tensor product of irreducible representations is irreducible then the Drinfeld polynomial of the tensor product is the product of the Drinfeld polynomials of tensor factors; \\ 
2) $V_{\lambda_i \setminus \mu_i}(z_i)$ is isomorphic to $V_{\tilde \lambda_i \setminus \tilde \mu_i}(0)$ where  $\tilde \lambda \setminus \tilde \mu$, where $\tilde \lambda_i = \lambda_i - z_i, \tilde \mu_i = \mu_i - z_i$. \newline
3) If $|z_i - z_j| \gg 0$ for all pairs $i,j$ then the tensor product $\bigotimes_{i=1}^k V_{\lambda_i \setminus \mu_i}(z_i)$ is an irreducible representation of $Y(\fsl_n)$. 

This means that it is enough to choose such Young diagrams $\lambda$ and $\mu$ that $\lambda \setminus \mu$ is a disjoint union of the skew diagrams $\tilde \lambda_i \setminus \tilde \mu_i$ such that the rows' and columns' numbers of the components are sufficiently distant from each other. Note that in this case $M_{\lambda \mu} = \bigotimes_{i=1}^k V_{\tilde \lambda_i \setminus \tilde \mu_i}(0)=\bigotimes_{i=1}^k V_{\lambda_i \setminus \mu_i}(z_i)$ as a $Y(\fsl_n)$-module.
\end{proof}

\subsection{Cyclic vector for Bethe subalgebras of $Y(\fgl_n)$}
From Proposition \ref{isomor} it follows that the image of $B(X)$ under $\eta_k$ is  $\mA_{X} \subset U(\fgl_{n+k})^{\fgl_k}$. 
%


\begin{theorem}
\label{cons}
For all $X \in \overline{M_{0,n+2}}$, the subalgebra $B(X)$ has a cyclic vector on any skew representation $V$ of $Y(\fgl_n)$.
\end{theorem}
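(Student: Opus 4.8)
The plan is to transport the desired cyclicity statement for $B(X)$ through the centralizer construction into the already-established cyclicity of the shift-of-argument subalgebra $\mA_X$ on a multiplicity module. The key inputs are Proposition \ref{isomor}, which computes the image $\eta_k(B(X)\otimes A_0)=\mA_X\subset U(\fgl_{n+k})^{\fgl_k}$, and Proposition \ref{c1}, which gives a cyclic vector for $\mA_X$ on $M_{\lambda\mu}$.

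First I would unwind the definitions. A skew representation $V=V_{\lambda\setminus\mu}(z)$ has underlying space the multiplicity space $M_{\lambda\mu}=\Hom(V_\mu,V_\lambda)$, which by Proposition \ref{repr} is an irreducible module over $U(\fgl_{n+k})^{\fgl_k}$; write $\rho\colon U(\fgl_{n+k})^{\fgl_k}\to\End(M_{\lambda\mu})$ for the corresponding homomorphism. The $Y(\fgl_n)$-action on $V$ is by definition $\rho\circ\Phi_k\circ\tau_z$, where $\Phi_k=\eta_k|_{Y(\fgl_n)\otimes 1}$ and $\tau_z$ is the shift automorphism. Thus the image of $B(X)$ in $\End(M_{\lambda\mu})$ is $\rho(\Phi_k(\tau_z(B(X))))$.

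Next I would identify this image with $\rho(\mA_X)$. Since Bethe subalgebras, including their limits, are stable under the shift automorphisms, $\tau_z(B(X))=B(X)$, so the image equals $\rho(\Phi_k(B(X)))$ and is in particular independent of $z$. Now invoke Proposition \ref{isomor}: $\eta_k(B(X)\otimes A_0)=\mA_X$. The factor $A_0$ maps into the center $Z(U(\fgl_{n+k}))\subset U(\fgl_{n+k})^{\fgl_k}$, which by Schur's lemma acts on the irreducible module $M_{\lambda\mu}$ by scalars; hence $\rho(\mA_X)=\rho(\eta_k(B(X)\otimes A_0))=\rho(\Phi_k(B(X)))$. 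In other words, the images of $B(X)$ and of $\mA_X$ in $\End(M_{\lambda\mu})$ coincide. Finally, Proposition \ref{c1} furnishes a vector $v\in M_{\lambda\mu}$ cyclic for $\mA_X$, i.e.\ $\rho(\mA_X)v=M_{\lambda\mu}$; by the previous identification this reads $\rho(\Phi_k(B(X)))v=M_{\lambda\mu}$, that is, $B(X)\cdot v=V$, so $v$ is cyclic for $B(X)$.

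I expect the only genuine subtlety, and hence the step to state with care, to be the image identification: one must check that passing from $B(X)\otimes A_0$ to $B(X)$ costs nothing on an irreducible module (the extra $A_0$-generators act as scalars) and that the shift $\tau_z$ may be discarded (because $B(X)$ is shift-invariant). All of the substantial mathematical content has already been packaged upstream: the refined asymptotic isomorphism $\eta_k(B(X)\otimes A_0)=\mA_X$, and the cyclicity of \emph{every} limit subalgebra $\mA_X$ on $M_{\lambda\mu}$, which itself reduces to the regular case of \cite{hkrw} together with the inductive tensor decomposition of Proposition \ref{ustr} over the irreducible components of the curve $X$. Given these, the theorem is essentially a formal consequence.
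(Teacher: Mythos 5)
Your proposal is correct and follows essentially the same route as the paper, whose proof consists of the remark that $\eta_k(B(X)\otimes A_0)=\mA_X$ (Proposition \ref{isomor}) together with the cyclicity of $\mA_X$ on $M_{\lambda\mu}$ (Proposition \ref{c1}). The two points you flag as subtleties --- that $A_0$ lands in the center and hence acts by scalars on the irreducible module, and that $\tau_z$ can be discarded by shift-invariance of $B(X)$ --- are exactly the details the paper leaves implicit, so your write-up is a faithful (and slightly more careful) version of the paper's argument.
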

\begin{proof}
Follows from Proposition \ref{c1}.
\end{proof}


\begin{theorem}
\label{main1}
Let $I$ be the subset of  $\bc^n$ such that all $B(X), X \in \overline{M_{0,n+2}}$ has a cyclic vector on $V$. The the set $I$ is a Zariski open subset of $\bc^n$. 
\end{theorem}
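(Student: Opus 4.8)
The plan is to exhibit the complement of $I$ as the image, under a \emph{proper} projection, of a Zariski closed set, using the completeness of $\overline{M_{0,n+2}}$. Write $N=\dim V$ and observe that the underlying vector space of $V=\bigotimes_{i=1}^k V_{\lambda_i\setminus\mu_i}(z_i)$ is independent of the evaluation parameters: only the $Y(\fgl_n)$-action $\rho_z\colon Y(\fgl_n)\to\End V$ varies, and it does so polynomially in $z=(z_1,\dots,z_k)$ (the automorphisms $\tau_{z_i}$ send $u\mapsto u-z_i$, and the comultiplication is polynomial in the factors). Set $P=\overline{M_{0,n+2}}\times\bc^k$, where $\bc^k$ is the space of parameters $(z_1,\dots,z_k)$, and consider the non-cyclic locus
\[
\mathcal B=\{(X,z)\in P\ :\ B(X)\ \text{has no cyclic vector on}\ V(z)\}.
\]
Since $z\notin I$ exactly when some $X$ satisfies $(X,z)\in\mathcal B$, we have $\bc^k\setminus I=\pi(\mathcal B)$ for the projection $\pi\colon P\to\bc^k$. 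Hence it suffices to prove that (i) $\mathcal B$ is Zariski closed in $P$, and (ii) $\pi(\mathcal B)$ is Zariski closed in $\bc^k$.

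For (i) I would instead show that the cyclic locus $P\setminus\mathcal B$ is open. Over $\overline{M_{0,n+2}}$ the Bethe subalgebras form a flat algebraic family \cite{ir2}: the assignment $X\mapsto B(X)$ extends regularly to the whole compactification, so, working on an affine chart of $\overline{M_{0,n+2}}$, one may choose finitely many elements $g_1(X),\dots,g_m(X)$ generating the image $\rho_z(B(X))\subset\End V$ as an algebra, whose matrices $\rho_z(g_i(X))$ are regular in $(X,z)$ (here one uses that the generating series of $B(X)$ act on $V(z)$ by matrices depending rationally on $u$ of uniformly bounded degree, so only finitely many of their coefficients are linearly independent; the identification of these images via the centralizer construction is Proposition~\ref{isomor}). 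A vector $v$ is cyclic for $B(X)$ on $V(z)$ iff the span of $w\cdot v$, where $w$ runs over words of length at most $N-1$ in the $\rho_z(g_i(X))$, is all of $V$: the chain of these spans strictly increases until it stabilizes at $\rho_z(B(X))v$, so stabilization occurs by step $N-1$. The resulting matrix $M(X,z,v)$ has entries regular in $(X,z)$ and polynomial in $v$, and $v$ is cyclic iff $\mathrm{rk}\,M(X,z,v)=N$, an open condition. Thus
\[
\mathcal U=\{(X,z,v)\ :\ v\ \text{is cyclic for}\ B(X)\ \text{on}\ V(z)\}
\]
is open in $P\times V$ on each chart, hence globally. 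The projection $q\colon P\times V\to P$ is flat (a base change of $\mathbb A^N\to\mathrm{pt}$) and of finite type, hence an open morphism, so $q(\mathcal U)=P\setminus\mathcal B$ is open and $\mathcal B$ is closed.

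For (ii) I would invoke completeness: $\overline{M_{0,n+2}}$ is proper, so the projection $\pi\colon\overline{M_{0,n+2}}\times\bc^k\to\bc^k$ is a closed map. Therefore $\pi(\mathcal B)=\bc^k\setminus I$ is Zariski closed, i.e.\ $I$ is Zariski open, as claimed. (Combined with Theorem~\ref{cons}, which produces a point of $I$ by degenerating the $z_i$ to distant integers so that $V$ becomes a single skew module, this also yields that $I$ is dense, giving Theorem~A.)

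The main obstacle is precisely the algebraicity invoked in step (i): one must know that the Bethe subalgebras, and in particular a finite generating set of their images in $\End V$, depend regularly on $X$ across \emph{all} boundary strata of $\overline{M_{0,n+2}}$, not merely over $T^{reg}$. This is the content of the flat-family and limit-subalgebra results (the closure of $\{B(C)\}_{C\in T^{reg}}$ being parameterized by $\overline{M_{0,n+2}}$, \cite{ir2}) together with Proposition~\ref{isomor}; once regular dependence over the whole base is granted, the rank criterion for cyclicity and the properness argument are routine.
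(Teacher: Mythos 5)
Your proposal is correct and follows essentially the same route as the paper: the paper likewise forms the non-cyclic locus $J\subset\overline{M_{0,n+2}}\times\bc^n$, asserts that cyclicity is an open condition so that $J$ is closed, uses completeness of $\overline{M_{0,n+2}}$ to conclude that $\pi(J)$ is Zariski closed, and obtains properness (non-emptiness of $I$) from the skew-representation specialization via Proposition~\ref{skew} and Theorem~\ref{cons}. The only difference is that you spell out the openness of the cyclic locus in detail (regular dependence of generators over all strata, the rank criterion, and openness of the flat projection), which the paper dispatches in a single sentence.
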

\begin{proof}
Consider the space $\overline{M_{0,n+2}} \times \bc^n$. 
Let $\pi_{\bc^n}: \overline{M_{0,n+2}} \times \bc^n \to \bc^n$ be the projection to second factor. 
Let $J \subset \overline{M_{0,n+2}} \times \bc^n$ be the set such that for any $(X,(z_1, \ldots, z_k))$ subalgebra $B(X)$ does not have a cyclic vector on $W = \bigotimes_{i=1}^k V_{\lambda_i \setminus \mu_i}(z_i)$. 

\begin{lemma}
\label{ta}
The set $\pi_{\bc^n}(J)$ is a proper closed subset (in Zariski topology) of $\bc^n$.
\end{lemma}
\begin{proof}
 Note that $\overline{M_{0,n+2}}$ is a projective algebraic variety hence is a complete algebraic variety. It follows that for any algebraic variety 
$Y$ the projection $\pi_Y: \overline{M_{0,n+2}} \times Y \to Y$ is a closed map (with respect to Zariski topology). Consider $Y = \bc^k$ and the projection $\pi_{\bc^n}: \overline{M_{0,n+2}} \times \bc^n \to \bc^n$. Note that the condition to have a cyclic vector is open hence the set $J \subset  \overline{M_{0,n+2}} \times \bc^n$ is a closed subset thus its image $\pi_{\bc^n}(J)$ is also closed. From Proposition \ref{skew} it follows that there exist parameters $z_i, i =1, \ldots, k$ such that $W$ is a skew representation of $Y(\fgl_n)$. Then from Theorem \ref{cons} it follows that $\pi_{\bc^n}(J)$ is a proper closed subset of $\bc^n$.
\end{proof}

From the Lemma it follows that $I$ is Zariski open and non-empty.

\end{proof}

\begin{corol}
For all $X \in \overline{M_{0,n+2}}$ the subalgebra $B(X)$ has a cyclic vector on any representation of the form $\bigotimes_{i=1}^k V_{\lambda_i \setminus \mu_i}(z_i)$ for all $|z_i - z_j| \gg 0$ on any irreducible curve in $\bc^n$ which is not contained in $J$.
\end{corol}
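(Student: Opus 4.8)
The plan is to read the statement off the properness established in Lemma~\ref{ta}, combined with the dimension bound coming from irreducibility of the curve. Write $\mathcal{C}\subset\bc^n$ for the irreducible curve in question, and interpret ``not contained in $J$'' as $\mathcal{C}\not\subset\pi_{\bc^n}(J)$. By Lemma~\ref{ta} the image $\pi_{\bc^n}(J)$ is a proper Zariski-closed subset of $\bc^n$, so $\mathcal{C}\cap\pi_{\bc^n}(J)$ is a proper Zariski-closed subset of the irreducible $1$-dimensional variety $\mathcal{C}$. Hence it is $0$-dimensional, i.e. a finite set of points $p_1,\ldots,p_m\in\mathcal{C}$. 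For every point of $\mathcal{C}$ outside this finite set the parameter $(z_1,\ldots,z_k)$ lies in $I=\bc^n\setminus\pi_{\bc^n}(J)$, and by Theorem~\ref{main1} the subalgebra $B(X)$ then has a cyclic vector on $\bigotimes_{i=1}^k V_{\lambda_i\setminus\mu_i}(z_i)$ for all $X\in\overline{M_{0,n+2}}$ simultaneously.

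It remains to match this with the condition $|z_i-z_j|\gg0$. Set $R=\max_{1\le l\le m}\max_{i\ne j}|z_i(p_l)-z_j(p_l)|$, the largest of the finitely many pairwise differences occurring at the bad points. If $(z_1,\ldots,z_k)\in\mathcal{C}$ satisfies $|z_i-z_j|>R$ for all $i\ne j$, then its pairwise differences are strictly larger in absolute value than all those occurring at the points $p_l$, so this parameter differs from every $p_l$ and therefore lies in $I$. Thus $B(X)$ has a cyclic vector on $\bigotimes_{i=1}^k V_{\lambda_i\setminus\mu_i}(z_i)$ for all such parameters and all $X$, which is exactly the asserted statement; moreover in this regime the hypothesis $|z_i-z_j|\gg0$ guarantees, via Proposition~\ref{skew}, that the representation is genuinely a skew representation, matching the setting in which Theorem~\ref{cons} was applied.

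The only genuine input is Lemma~\ref{ta}, i.e. the combination of completeness of $\overline{M_{0,n+2}}$ (which makes $\pi_{\bc^n}$ a closed map), openness of the cyclic-vector condition, and the existence of at least one good skew parameter supplied by Proposition~\ref{skew} and Theorem~\ref{cons}; everything else is formal once that is in hand. The step I expect to require the most care is the bookkeeping around irreducibility: it is essential that $\mathcal{C}$ be irreducible, since only then does the hypothesis that $\mathcal{C}$ is not contained in the closed set $\pi_{\bc^n}(J)$ force the intersection $\mathcal{C}\cap\pi_{\bc^n}(J)$ to be $0$-dimensional. For a reducible curve an entire component could lie inside $\pi_{\bc^n}(J)$, and the large-difference conclusion would fail along that component, which is precisely why the statement is restricted to irreducible curves.
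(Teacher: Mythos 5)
Your proof is correct and follows essentially the same route as the paper: Lemma~\ref{ta} gives that $\pi_{\bc^n}(J)$ is proper and Zariski-closed, so its intersection with an irreducible curve not contained in it is a proper closed subset of the curve, hence finite, and parameters on the curve with sufficiently large pairwise differences avoid this finite set. The paper states only the finiteness step and leaves the rest implicit; your explicit threshold $R$ and the remark about why irreducibility is essential are just careful elaborations of the same argument.
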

\begin{proof}
If the curve in $\bc^n$ does not belong to $J$ then $J \cap \bc^n$ is a proper closed subset of the curve thus finite.

\end{proof}

\section{Real forms of $\overline{M_{0,n+2}}$ and simplicity of spectra}
\label{6}
In this section we give a proof of modified Proposition 4.2 from \cite{kr} following the argument of Section 1 of \cite{R}.
\subsection{Hermitian product on a tensor product of evaluation representations} 

Let \(W\) be a representation of $Y(\fgl_n)$ and $\rho_W$ be the corresponding morphism to $\End(W)$. Let $T^{(p)}(u)$ be a $\binom{n}{p} \times \binom{n}{p}$ matrix whose entries are the principal quantum $p \times p$ minors. One can also define $T^{(p)}(u)$ by the following two equalities:
$$T^{(p)}(u) = A_p T_1(u) \ldots T_p(u-p+1) = T_p(u-p+1) \ldots T_1(u) A_p$$
We consider $T^{(p)}(u)$ as an element of $Mat_{\binom{n}{p}} \otimes Y(\fgl_n)[[u^{-1}]]$. By definition put \[\cTp(u)=\id\otimes\rho_W(\Tp(u)).
\]
If  \(W=V(z)\), where \(V\) is an irreducible \(\gln\) module and $z \in \bc$ then
\begin{equation*}
\label{toperatorforevrep}    
\mathcal T^{(1)}(u)=1+\left(\sum_{1\leqslant i, j\leqslant n} e_{ij}\otimes \rho_{V}(E_{ij})\right)(u+z)^{-1}
\end{equation*}
where \(e_{ij} \in \) \(Mat_n\) are matrix identities and \(E_{ij}\) form a basis of \(\gln\).

Let
\[
W=V_1(z_1)\otimes\ldots\otimes V_k(z_k)
\]
be a product of evaluation modules. Then we have
\begin{equation*}
\label{T_W as a product}
\mathcal T_W^{(1)}(u)=\prod_{i=1}^k \mathcal T_{V_i(z_i)}^{(1)}(u)
\end{equation*}

Each \(V_i\) has a standard Hermitian form which makes \(V_i\) a unitary \(\gln\) representation. By taking the product of these forms, we obtain a Hermitian form on \(W\). We denote by \(^+\) the conjugate operator with respect to the Hermitian form. By $^{t}$ we denote the transpose of the operators with respect to matrix argument, i.e. the $(i,j)$-th matrix element of $T(u)^t$ is $t_{ji}(u)$ and the same for $\cTone(u)^t$.

\begin{proposition}\label{pr:normal}
Suppose that  for each \(i\), \(1\leqslant i\leqslant k\),
\begin{equation*}
\label{normcondition}
\left(\cTone_{V_i(z_i)}(u)\right)^{-1}=f_{i}(u)\left ((\cTone_{V_i(z_i)}(-u+c))^{t}\right)^+
\end{equation*}
with some \(f_{i}(u)\in 1+ \mathbb C[[u^{-1}]]\) and \(c\in\mathbb C\)
and that $C \in T^{comp}$. Then the subalgebra \(B(C)\) acts by normal operators on \(W\).
\end{proposition}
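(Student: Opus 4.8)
The goal is to prove that the image $\rho_W(B(C))$ is stable under the adjoint $^+$. Since $B(C)$ is commutative (Theorem~\ref{betheprop}(1)), this suffices: if $b\in\rho_W(B(C))$ then $b^+\in\rho_W(B(C))$ commutes with $b$, so $b$ is normal. Thus the whole task reduces to computing $(\tau_p(u,C))^+$ and checking that it stays inside $\rho_W(B(C))$.

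The first step is to upgrade the hypothesis from the tensor factors to the whole module $W$. Using $\mathcal{T}^{(1)}_W(u)=\prod_{i=1}^k\mathcal{T}^{(1)}_{V_i(z_i)}(u)$ together with the two distributivity rules — the matrix transpose $^t$ reverses the order of the factors in $Mat_n$, whereas $^+$ acts only on $\End(W)$ and, because the $\End(V_i)$ commute pairwise, distributes over the factors \emph{without} reordering — one sees that the reversal produced by inverting $\mathcal{T}^{(1)}_W(u)$ matches the reversal produced by the transpose in $\mathcal{T}^{(1)}_W(-u+c)$, yielding
\[
\left(\mathcal{T}^{(1)}_W(u)\right)^{-1}=g(u)\left(\left(\mathcal{T}^{(1)}_W(-u+c)\right)^{t}\right)^{+},\qquad g(u)=\prod_{i=1}^k f_i(u)\in 1+\mathbb{C}[[u^{-1}]].
\]
This is exactly the relation satisfied by a single factor, now valid for the reducible module $W$, and it is the engine of the whole argument.

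Next I would push this identity through the fusion procedure to the quantum-minor matrices $\mathcal{T}^{(p)}_W(u)=A_p\mathcal{T}_1(u)\cdots\mathcal{T}_p(u-p+1)$. Taking $^+$ reverses the order of the $\End(W)$-factors but keeps the auxiliary factors fixed, giving $(\mathcal{T}^{(p)}_W(u))^{+}=A_p\,\mathcal{T}_p(u-p+1)^{+}\cdots\mathcal{T}_1(u)^{+}$; substituting the displayed relation turns each $\mathcal{T}_a^{+}$ into an auxiliary transpose of an inverse, $((\mathcal{T}_a(\cdot))^{-1})^{t_a}$, at the reflected argument $-u+c$. Feeding this into $\rho_W(\tau_p(u,C))=\tr\,(\wedge^p C)\,\mathcal{T}^{(p)}_W(u)$ and simplifying the product by the standard Yangian identities between quantum minors, the quantum determinant $\qdet T(u)$ and the antipode $S(T(u))=T(u)^{-1}$ (the quantum Cramer/Jacobi relations linking $\mathcal{T}^{(p)}$ and $\mathcal{T}^{(n-p)}$), I expect $(\tau_p(u,C))^{+}$ to come out as a scalar series times $\tau_{n-p}(-u+c,\bar C^{-1})$ and a power of $\qdet T(u)$.

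The compactness hypothesis enters only at the last step. For $C\in T^{comp}$ one has $C^{t}=C$ and $\bar C=C^{-1}$, so the transformed twist $\bar C^{-1}$ equals $C$; moreover the Hodge/cofactor duality $\wedge^{n-p}(C^{-1})\propto\wedge^{p}C$ matches $\tau_{n-p}(\cdot,C^{-1})$ with $\tau_p(\cdot,C)$ up to the central factor $\det C$ and powers of $\qdet T(u)$, which lie in the centre and hence in $\rho_W(B(C))$. Therefore $(\tau_p(u,C))^{+}$ is again a generating series of $\rho_W(B(C))$, proving $^+$-stability and normality. I expect the main obstacle to be precisely this third step: moving $^+$ through the trace over the $p$ auxiliary spaces while controlling the order reversal and the auxiliary transposes, passing from $\mathcal{T}^{(p)}$ to its inverse, and tracking the exact scalar prefactor and the reflected spectral argument so that the twist comes out as $\bar C^{-1}$. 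Everything surrounding this bookkeeping is formal, and the unitarity of $C$ is used only at the very end to identify $\bar C^{-1}$ with $C$.
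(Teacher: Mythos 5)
You reduce normality to $^+$-stability of $\rho_W(B(C))$ and derive, from the single-factor hypothesis and the fact that $M\mapsto (M^t)^+$ is an antilinear anti-automorphism, the same two relations the paper derives — the product relation over the tensor factors and its fused analogue for $\cTp_W$ (the paper just does these in the opposite order: fusion for each factor first, then the product over $i$). The genuine divergence is the last step. The paper multiplies the relation $\left(\cTp_W(u)\right)^{-1}=\prod_i g_{pi}(u)\left((\cTp_W(-u+c))^{t}\right)^+$ by $\rho_{\Lambda^p(\bc^n)}(C^{-1})=\rho_{\Lambda^p(\bc^n)}(\overline{C})$ (the only place where $C\in T^{comp}$ is used), takes the trace, recognizes the left-hand side as the antipode image $S(\tau_p(u,C^{-1}))$ acting on $W$, and concludes by citing the symmetry $S(B(C^{-1}))=B(C)$ from \cite{ir2}. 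You instead propose to compute $(\tau_p(u,C))^+$ head-on and identify it, via quantum Cramer/Jacobi identities, with $\tau_{n-p}(-u+c,\overline{C}^{-1})$ times central $\qdet$-factors, and only then set $\overline{C}^{-1}=C$. That route is viable: the complementary-minor identities you would need are precisely the computational content of the antipode symmetry the paper quotes, so you would in effect be inlining the proof of the cited fact. The paper's shortcut buys brevity — all the bookkeeping you yourself flag as the main obstacle (auxiliary transposes, order reversal, scalar prefactors, the $p\leftrightarrow n-p$ duality) is absorbed into one known statement about $S$ — while your version buys self-containedness. Be aware, though, that as written your third step is a stated expectation rather than a proof; to finish along your lines you must actually establish the quantum cofactor identity (it can be extracted from Molev's book), or else fall back on quoting $S(B(C^{-1}))=B(C)$ as the paper does.
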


\begin{proof}
From the definition of $T^{(p)}(u)$ it follows that for any representation $V_i(z_i)$ of $Y(\fgl_n)$ we have
$$\cTp(u) = A_k \cTone_1(u) \ldots \cTone_k(u-p+1)$$

Using the fact that $T(u) \to T^t(u), T(u) \to T(-u)$ and the conjugation are anti-automorphisms, we get
\begin{eqnarray*}
(\cTp(-u+c)^{t})^+ = A_p (\cTone_p(-u+p-1+c)^t)^+ \ldots (\cTone_1(-u+c)^t)^+ = \\
 f_i(u)^{-1}\ldots f_i(u-p+1)^{-1} A_p (\cTone_p)^{-1}(-u+p-1) \ldots (\cTone_1)^{-1}(u) = g_{pi}(u) (\cTp(u))^{-1}
\end{eqnarray*}





Since
\[
\cTp_W(u)=\prod_{i=1}^{k}\cTp_{V_i(z_i)}(u)
\]
we obtain
\[
\left(\cTp_{W}(u)\right)^{-1}=\prod_{i=1}^k g_{pi}(u)\left((\cTp_W(-u+c))^{t}\right)^+
\] for some $g_{pi}(u) \in 1+\bc[[u^{-1}]]$ and $1 \leq p \leq n$.

Abusing notation we denote by $\tau_p(u,C)$ the image of $\tau_p(u,C)$ in $W$. Multiplying the last equality by $\rho_{\Lambda^p(\bc^n)}(C^{-1}) = \rho_{\Lambda^p(\bc^n)}(\overline{C})$ and taking the trace we get that 
$$S(\tau_p({u,C^{-1}})) = \left(\prod_{i=1}^k g_{pi}(u)\right) \tau_p(-u+c,C)^+$$
But $S(B(C^{-1})) = B(C)$, see e.g \cite{ir2}. From this we see that coefficients of $\tau_k(u,C)^+$ belong to $B(C)$ hence $B(C)$ acts by normal operators.



\end{proof}

\begin{corol}
Under the conditions of Proposition~\ref{pr:normal}, the subalgebra $B(X)$ acts by normal operators on $W$ for any $X\in \overline{M_{0,n+2}^{comp}}$.
\end{corol}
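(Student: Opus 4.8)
The plan is to deduce the Corollary from Proposition~\ref{pr:normal} by a continuity argument, using that $\overline{M_{0,n+2}^{comp}}$ is by definition the closure (in the classical topology) of the set of regular unitary diagonal matrices, i.e. of the regular points of the compact torus $T^{comp}$. On this dense subset Proposition~\ref{pr:normal} already guarantees that $B(C)$ acts by normal operators, so it remains to propagate this property to the whole closure.

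First I would reduce the normality of the image $\rho_W(B(X))$ to a statement about generators. Recall that $B(X)$ is commutative and is generated by the coefficients $\tau_p^{(j)}(X)$ of the series $\tau_p(u,X)$, for $p=1,\ldots,n$. If for every pair of such generating coefficients $a,b$ one has $[\rho_W(a),\rho_W(b)^{+}]=0$, then, since the generators pairwise commute, any element of $\rho_W(B(X))$ (a polynomial in the $\rho_W(a)$'s) commutes with the adjoint of any other such element; in particular every element of $\rho_W(B(X))$ is normal. Thus normality of $B(X)$ on $W$ is equivalent to the vanishing of the finitely many operators $[\rho_W(\tau_p^{(j)}(X)),\rho_W(\tau_q^{(l)}(X))^{+}]$.

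Next I would invoke the flatness of the family of Bethe subalgebras over $\overline{M_{0,n+2}}$. By the description of limits of Bethe subalgebras (Theorems~\ref{result1} and~\ref{result2}, together with the fact that all limit subalgebras have the same size), the generators $\tau_p^{(j)}(X)\in Y(\fgl_n)$ depend algebraically, hence continuously, on $X\in\overline{M_{0,n+2}}$. Consequently the operators $\rho_W(\tau_p^{(j)}(X))\in\End(W)$ and their Hermitian adjoints $\rho_W(\tau_p^{(j)}(X))^{+}$ vary continuously with $X$, and so does each commutator $[\rho_W(\tau_p^{(j)}(X)),\rho_W(\tau_q^{(l)}(X))^{+}]$. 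By Proposition~\ref{pr:normal} these commutators vanish for all $X$ in the dense subset of regular points of $T^{comp}$; being continuous operator-valued functions of $X$ that vanish on a dense subset, they vanish on all of $\overline{M_{0,n+2}^{comp}}$. Hence $B(X)$ acts by normal operators for every $X\in\overline{M_{0,n+2}^{comp}}$.

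The main point requiring care is the continuity of the generators across the boundary strata: a priori the image $\rho_W(B(X))$ could jump in dimension in the limit, which would break a naive ``closed condition'' phrasing in a Grassmannian. This is precisely what the flatness of the family (equal size of all limit subalgebras) rules out, ensuring that the chosen generators $\tau_p^{(j)}(X)$ specialize to the boundary without degeneration. Once continuity of these fixed generators is in hand, the remainder is the elementary observation that a continuous operator-valued function vanishing on a dense set vanishes everywhere.
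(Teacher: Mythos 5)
Your overall strategy is the same as the paper's: the paper's proof consists of observing that $[B(X),B(X)^+]=0$ is a Zariski closed condition on $X\in \overline{M_{0,n+2}^{comp}}$ which, by Proposition~\ref{pr:normal}, holds on the dense subset $T_{comp}^{reg}$, hence everywhere. Your reduction of normality to the vanishing of the commutators of generators with adjoints of generators is also correct.

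However, your justification of the closedness/continuity step has a genuine gap: the coefficients $\tau_p^{(j)}$ do \emph{not} extend to a continuously varying system of generators of $B(X)$ over all of $\overline{M_{0,n+2}}$. For a boundary point $X$ (a degenerate stable curve) the series $\tau_p(u,X)$ is not defined at all, and if one takes the naive limits of $\tau_p^{(j)}(C(t))$ along a family $C(t)\to X$, these limits generate a \emph{strictly smaller} subalgebra than $B(X)$. For instance, in the simple limit of the second type, $C(t)=C_0+t\,\diag(C_1,\ldots,C_l)$, each coefficient $\tau_p^{(j)}(C(t))$ converges to $\tau_p^{(j)}(C_0)$, and these generate only $B(C_0)$, which for non-regular $C_0$ is not maximal commutative; by Theorem~\ref{result2} the actual limit subalgebra is the larger algebra $B(C_0)\otimes_{\bigotimes_i Z(U(\fgl_{k_i}))}\bigotimes_i\mA_{C_i}$, whose extra generators arise only from rescaled subleading terms such as $t^{-1}\left(\tau_p^{(j)}(C(t))-\tau_p^{(j)}(C_0)\right)$. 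So flatness does \emph{not} ensure that ``the chosen generators $\tau_p^{(j)}(X)$ specialize to the boundary without degeneration''; what it ensures is that each filtered piece $B(X)\cap Y(\fgl_n)_{\leq N}$ varies algebraically as a point of a Grassmannian. As written, your argument only proves that the proper subalgebra generated by the naive limits acts by normal operators, not all of $B(X)$. The repair is to work with local rather than global generators: since $X\mapsto B(X)\cap Y(\fgl_n)_{\leq N}$ is a morphism to the Grassmannian, one can choose, locally on $\overline{M_{0,n+2}^{comp}}$, bases of these filtered pieces varying real-algebraically with $X$; the vanishing of the commutators of basis elements with adjoints of basis elements is a closed condition independent of the choice of basis, and then your density argument goes through --- which is exactly the content of the paper's one-line proof.
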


\begin{proof}
Indeed, the condition $[B(X),B(X)^+]=0$ determines a Zariski closed subset in $\overline{M_{0,n+2}^{comp}}$. On the other hand, it is satisfied on a Zariski open dense subset $T_{comp}^{reg}\subset \overline{M_{0,n+2}^{comp}}$. Hence it is satisfied for any $X\in \overline{M_{0,n+2}^{comp}}$.  
\end{proof}

\subsection{} Now we are going to find representations of $Y(\fgl_n)$ which satisfy the conditions of Proposition~\ref{normcondition}. Let \(\omega_r,1\leqslant r\leqslant n-1\) be the fundamental weights of \(\sln\).

\begin{proposition}
\label{semisimp}
Let $V$ be an irreducible representation of \(\gln\) and $V(z)$ be a evaluation representation of $Y(\fgl_n)$. Then the condition of Proposition \ref{normcondition}
is equivalent to the following two conditions:

\begin{enumerate}
    \item The highest weight of $V$ is equal to \(l\omega_r\) for some \(l\in\mathbb Z\), \(1\leqslant r\leqslant n-1\);
    \item the evaluation parameter \(z=\frac{l+n-r-\bar{c}+1}{2} + iv\), with \(v\in\mathbb R\).
\end{enumerate}
Here $c$ is from Proposition \ref{normcondition}.
\end{proposition}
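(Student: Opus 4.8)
The plan is to make the defining condition completely explicit and reduce it to a spectral condition on a single operator. Write $\cTone_{V(z)}(u)=1+\mathcal K\,(u+z)^{-1}$, where $\mathcal K=\sum_{i,j}e_{ij}\otimes\rho_V(E_{ij})\in\End(\bc^n)\otimes\End(V)$. Using that ${}^{t}$ interchanges the two auxiliary indices and that $V$ is a unitary $\gln$-module, so that $\rho_V(E_{ij})^{+}=\rho_V(E_{ji})$ while the scalar coefficients get conjugated (with $u$ treated as real), one computes
\[
\big((\cTone_{V(z)}(-u+c))^{t}\big)^{+}=1+\mathcal K\,(-u+\bar c+\bar z)^{-1}.
\]
Hence the condition of Proposition~\ref{pr:normal} is equivalent to the unitarity-type identity
\[
\Big(1+\frac{\mathcal K}{u+z}\Big)\Big(1+\frac{\mathcal K}{-u+\bar c+\bar z}\Big)=f(u)^{-1}\cdot\id .
\]

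First I would extract the spectral content of this identity. On an eigenspace of $\mathcal K$ with eigenvalue $d$ the left-hand side equals $(p+d)(q+d)/(pq)$, where $p=u+z$ and $q=-u+\bar c+\bar z$; crucially $p+q=z+\bar c+\bar z$ does not depend on $u$. Since $f(u)$ is a single scalar series, the value $(p+d)(q+d)/(pq)$ must be \emph{independent} of $d$, and $(p+d_1)(q+d_1)-(p+d_2)(q+d_2)=(d_1-d_2)(p+q+d_1+d_2)$ forces $z+\bar z+\bar c+d_1+d_2=0$ for any two eigenvalues. Three or more distinct eigenvalues are therefore impossible, while exactly two eigenvalues $d_1,d_2$ give $z+\bar z=-\bar c-(d_1+d_2)$ and determine $f(u)$. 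Thus the condition holds \emph{iff} $\mathcal K$ has at most two distinct eigenvalues and $\Re z$ is tuned as above; note that only $\Re z$ is constrained, which accounts for the free imaginary part $iv$ in condition~(2).

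The heart of the matter is then the spectrum of $\mathcal K$, and this is the step I expect to be the main obstacle: the operator $\mathcal K=\sum_{ij}e_{ij}\otimes\rho_V(E_{ij})$ is \emph{not} equivariant for the diagonal $\gln$-action on $\bc^n\otimes V$, so its eigenspaces are not the isotypic components, and its highest-weight vectors are not eigenvectors. The key observation that resolves this is that $\mathcal K$ \emph{is} equivariant for the twisted action $\tilde E_{ij}=e_{ij}\otimes 1-1\otimes\rho_V(E_{ji})$ — a one-line commutator check gives $[\mathcal K,\tilde E_{ij}]=0$. Under this twisted action the second tensor factor carries the representation $X\mapsto-\rho_V(X^{t})$ (matrix transpose $E_{ij}\mapsto E_{ji}$ in $\gln$), which is the dual module $V^{*}\cong V_{\lambda^{*}}$ with $\lambda^{*}=(-\lambda_n,\dots,-\lambda_1)$, and $\mathcal K$ coincides with the split Casimir of $\bc^n\otimes V^{*}$. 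Consequently $\mathcal K$ acts by a scalar (a content/Casimir value of an added box) on each irreducible component $V_{\lambda^{*}+\epsilon_k}$, and the number of distinct eigenvalues of $\mathcal K$ equals the number of addable boxes of $\lambda^{*}$, that is, the number of distinct parts of $\lambda$.

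Finally I would translate the spectral dichotomy into conditions (1)–(2). Having at most two distinct eigenvalues means $\lambda$ has at most two distinct parts, i.e. the $\sln$-highest weight of $V$ is $l\omega_r$ for some $l\in\bz$, $1\le r\le n-1$; a uniform shift of all $\lambda_i$ (a $\det$-twist) only shifts $\mathcal K$ by a scalar and is absorbed into the evaluation parameter, which is why condition~(1) is phrased for the $\sln$-weight and why $z$ carries the remaining freedom. Computing the two Casimir eigenvalues of $\mathcal K$ on the two components $V_{\lambda^{*}+\epsilon_1}$ and $V_{\lambda^{*}+\epsilon_{r+1}}$ explicitly in terms of $l,r,n$, and substituting their sum into $z+\bar z=-\bar c-(d_1+d_2)$, yields the precise value $z=\frac{l+n-r-\bar c+1}{2}+iv$ of condition~(2). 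Both implications of the proposition then follow from this chain of equivalences.
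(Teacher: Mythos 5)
Your proposal follows essentially the same route as the paper's own proof: expand the condition of Proposition~\ref{pr:normal} using $\cTone_{V(z)}(u)=1+\mathcal K(u+z)^{-1}$, reduce it to a quadratic relation on $\mathcal K$ whose linear coefficient is $z+\bar z+\bar c$, and then determine for which irreducible $V$ the operator $\mathcal K$ has at most two eigenvalues, via the Pieri-type decomposition into two irreducible summands. In fact, at the two delicate points you are more careful than the paper. First, the paper passes directly to $P^2=-(z+\bar z+\bar c)P$, silently discarding the possible constant term; your reduction to $\mathcal K^2+(z+\bar z+\bar c)\mathcal K=\beta\,\mathrm{Id}$ is the correct one (for rectangular $\lambda$ with $\lambda_n=0$ one eigenvalue of $\mathcal K$ is $0$, so $\beta=0$ only a posteriori). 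Second, the paper's spectral analysis rests on the claim that $P$ "acts as an endomorphism of the $\gln$ representation $\bC\otimes V$", which is false for the diagonal action; it is exactly your twisted action $\tilde E_{ij}=e_{ij}\otimes 1-1\otimes\rho_V(E_{ji})$ (equivalently, reading the space as $\bc^n\otimes V^*$) that makes $\mathcal K$ an intertwiner, so your commutator check is the honest version of the step the paper waves at, and it also supplies the semisimplicity of $\mathcal K$ that your eigenvalue dichotomy needs. Two small slips that do not affect the structure: $\mathcal K$ is \emph{minus} the split Casimir of $\bc^n\otimes V^*$, and the second summand is $V_{\lambda^*+\epsilon_{n-r+1}}$, not $V_{\lambda^*+\epsilon_{r+1}}$.

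The genuine gap is your last step, which is asserted rather than carried out, and it is precisely where trouble hides. With your normalization the two eigenvalues are $d_1=0$ and $d_2=l+n-r$, so your constraint $z+\bar z=-\bar c-(d_1+d_2)$ gives $z=\frac{r-l-n-\bar c}{2}+iv$. This agrees with the final line of the paper's own proof (which reads $a=\frac{-l-n+r-\bar c}{2}+iv$) but \emph{not} with the formula $z=\frac{l+n-r-\bar c+1}{2}+iv$ in the statement you are proving: equality would force $2(l+n-r)=-1$, impossible for integers. The discrepancy is an inconsistency internal to the paper — Section~2 defines $\ev_z$ so that the evaluation module has kernel $(u-z)^{-1}$, while Section~6 uses $(u+z)^{-1}$, and only the stated formula (with $\bar c$ essentially $n+1$) is compatible with Theorem~\ref{th:compact} — so you could not have reconciled it, but your claim that the computation "yields the precise value" of condition~(2) is unverified and, in the convention you chose, false. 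Relatedly, your remark that a determinant twist is "absorbed into the evaluation parameter" actually concedes that condition~(2), as a fixed formula, holds only when the $\fgl_n$-weight is exactly rectangular; with condition~(1) read as an $\sln$-weight the equivalence requires shifting $\operatorname{Re}z$ by the twist, a point you (like the paper) leave implicit.
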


\begin{proof}
We can rewrite the condition from Proposition \ref{normcondition} as
\begin{equation*}
\label{normcondoneside}    
\cTone_{V(z)}(u)^{t}\cTone_{V(z)}(-u+c)^+=f(u)\mathrm{Id}\otimes\mathrm{Id}.
\end{equation*} for some $f(u) \in 1 + \bc[[u^{-1}]]$.

As we know from (\ref{toperatorforevrep}),
\[
\mathcal T_{V(z)}^{(1)}(u)=1+\left(\sum_{1\leqslant i, j\leqslant n} e_{ij}\otimes \rho_{V}(E_{ij})\right)(u+z)^{-1}.
\]
 
Putting this together and multiplying by \((u+z)(-u+\Bar{z}+\Bar{c})\), on the LHS we get
\[
(u+z)(-u+\Bar{z}+\Bar{c})+(z+\Bar{z}+\Bar{c})\left (\sum_{1\leqslant i,j\leqslant n}e_{ij}\otimes\rho_{V}(E_{ij})\right )+\sum_{1\leqslant i,j,k\leqslant n}e_{ij}\otimes \rho_{V}(E_{ik})\rho_{V}(E_{kj}).
\]

Denote \(P=\sum_{1\leqslant i,j\leqslant n}e_{ij}\otimes\rho_{V}(E_{ij}) \in \End(\bc^n) \otimes \End(V)\). Therefore for the condition (\ref{normcondoneside}) to be satisfied we need 
$$P^{2}=-(z+\Bar{z}+\Bar{c})P.$$ So the statement of the Proposition reduces to the following lemma

\begin{lemma}
Suppose that $V$ is an irreducible $\fgl_n$ representation. Then we have $P^2 = a P$ for some $a \in \bc$ if and only if the following two conditions are satisfied:
\begin{enumerate}
\item The highest weight of $V$ is $l \omega_r$ for some $l \in \bz_{>0}, 1 \leq r \leq n-1$;
\item $a = l+n-r$.
\end{enumerate}
\end{lemma}
\begin{proof}

Suppose that \(V\) has the highest weight of the form \(l\omega_r\) for some \(r,l\). Then \(P\) acts proportionally to \(P^2\) on \(\bC\otimes V\). 
We know that \(P\) acts as an endomorphism of \(\gln\) representation \(\bC\otimes V\). Hence both \(P\) and \(P^2\) are endomorphisms of this representation. If the highest weight of \(V\) equals \(l\omega_r\) for some \(r,l\), then by Littlewood-Richardson rule the product \(\bC\otimes V\) is isomorphic to the sum of two non-isomorphic irreducible representations of \(\gln\). Hence \(\End(\bC\otimes V)\) is two-dimensional and thus \(P^2=\alpha P+\beta \mathrm{Id}\). If we compare the action of \(\tr P^2\), \(\tr P\) and \(\tr \mathrm{Id}\) and then compare the action of coefficient at \(e_{11}\) for \(P^2\), \(P\) and \(\mathrm{Id}\), we get that \(\beta = 0\) and thus the claim follows.

Suppose that \(P\) acts proportionally to \(P^2\) on \(\bC\otimes V\). Note that the $i$-th diagonal element of $P$ is $E_{ii}$ and that of $P^2$ is $\sum_{j=1}^n E_{ij} E_{ji}$. 
Comparing the action of these matrix elements on the highest vector of $V$ we obtain the condition on the representation and the proportionality coefficient \(l+n-r\).

\end{proof}
Comparing the coefficients of $P$ we get $$l+n-r = -(a+\bar{a}+
\bar{c}).$$ This implies that \(a=\frac{-l-n+r-\bar{c}}{2} + iv\), \(v\in\mathbb R\).

\end{proof}

The $Y(\fgl_n)$-module $V_{l\omega_r}(z)$ is called \emph{Kirillov-Reshetikhin modules}. Its highest weight with respect to $\fgl_n$ is the rectangular Young diagram of the size $l\times r$.

\subsection{Compact real form.} 

\begin{theorem}\label{th:compact}
Suppose that all $V_i$'s are Kirillov-Reshetikhin modules. Let $l_i \times r_i$ be the size of the corresponding Young diagram.
Then, for $(x_1, \ldots, x_k)  \in \br^k$ from a Zariski open subset, subalgebra $B(X)$ for all $X \in \overline{M_{0,n+2}^{comp}}$ has simple spectrum on $\bigotimes_{i=1}^k V_i\left( \frac{l_i}{2} - \frac{r_i}{2} + i x_i\right)$. 

\end{theorem}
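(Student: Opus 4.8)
The plan is to realize ``simple spectrum'' as the conjunction of two properties that are already available from the machinery built above: the existence of a cyclic vector (from Theorem~\ref{main1}, i.e.\ Theorem~A) and semisimplicity of the action (from the normality of $B(X)$ on the compact real form). Indeed, if a commutative algebra acts on a finite-dimensional space $W$ with a cyclic vector $v$, then the evaluation map $a\mapsto av$ is surjective and injective (an element killing $v$ kills $B(X)v=W$), so it identifies the image of the algebra in $\End(W)$ with $W$; if in addition the action is semisimple, that image is a product of $\dim W$ copies of $\bc$, forcing every joint eigenspace to be one-dimensional. Thus it suffices to produce, for $(x_1,\ldots,x_k)$ in a nonempty Zariski open subset of $\br^k$, both a cyclic vector and a semisimple action of $B(X)$ for every $X\in\overline{M_{0,n+2}^{comp}}$.

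For semisimplicity I would fix the constant $c=n+1$ in Proposition~\ref{pr:normal}. With this choice, Proposition~\ref{semisimp} says that the normality condition holds for a Kirillov--Reshetikhin module $V_i=V_{l_i\omega_{r_i}}$ precisely when its evaluation parameter has the form $z_i=\tfrac{l_i+n-r_i-(n+1)+1}{2}+ix_i=\tfrac{l_i}{2}-\tfrac{r_i}{2}+ix_i$ with $x_i\in\br$, which is exactly the hypothesis of the theorem; note that a single $c$ works simultaneously for all tensor factors. Hence the conditions of Proposition~\ref{pr:normal} are met for every $(x_1,\ldots,x_k)\in\br^k$, and the Corollary following Proposition~\ref{pr:normal} gives that $B(X)$ acts by normal operators on $W=\bigotimes_i V_i(z_i)$ for all $X\in\overline{M_{0,n+2}^{comp}}$. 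Since a commuting family of normal operators on a Hermitian space is simultaneously diagonalizable (Fuglede--Putnam), the action is semisimple for all $x$ and all such $X$.

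For the cyclic vector I would invoke Theorem~\ref{main1}: the set $I\subset\bc^k$ of evaluation parameters for which $B(X)$ has a cyclic vector for all $X\in\overline{M_{0,n+2}}$ (a fortiori for all $X\in\overline{M_{0,n+2}^{comp}}$) is Zariski open. Restrict attention to the complex affine line $L^{\bc}=\{\,z_i=\tfrac{l_i}{2}-\tfrac{r_i}{2}+w_i\ :\ w_i\in\bc\,\}$, whose real form $L=\{\,w_i=ix_i,\ x_i\in\br\,\}$ carries the parameters of interest. The intersection $I\cap L^{\bc}$ is Zariski open in $L^{\bc}\cong\bc^k$, and it is nonempty because the point $w_i=-\tfrac{l_i}{2}+\tfrac{r_i}{2}-N_i$ (with $N_i$ large, widely separated positive integers) makes every $z_i=-N_i$ a non-positive integer with $|z_i-z_j|\gg0$, so by Proposition~\ref{skew} the module $W$ is then a genuine skew representation and by Theorem~\ref{cons} $B(X)$ has a cyclic vector for all $X$. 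Because the imaginary locus $i\br^k$ is Zariski dense in $\bc^k$, the proper closed complement of $I\cap L^{\bc}$ cannot contain $L$; restricting its defining equations to the coordinates $x\in\br^k$ yields a nonempty Zariski open subset of $\br^k$ on which $B(X)$ has a cyclic vector for all $X$.

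Since semisimplicity holds for all of $\br^k$, on this nonempty Zariski open subset both properties hold at once, and combining them as in the first paragraph gives simple spectrum of $B(X)$ for all $X\in\overline{M_{0,n+2}^{comp}}$, which is the assertion. The main obstacle I anticipate is the third step: Theorem~\ref{main1} only guarantees a cyclic vector on a Zariski open subset of the \emph{full} parameter space $\bc^k$, so the real content is to show this open set meets the constrained line $L$ of admissible parameters. This is where the interplay between the skew-representation specialization (to exhibit one good point on $L^{\bc}$) and the Zariski density of the imaginary axis (to transport openness down to the real parameters $x_i$) is essential; matching the real parts via the uniform choice $c=n+1$ is the small bookkeeping point that makes the normality input of Proposition~\ref{semisimp} apply to all tensor factors simultaneously.
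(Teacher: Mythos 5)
Your proposal is correct and follows essentially the same route as the paper's own proof: a cyclic vector for all $X$ coming from Theorem~\ref{main1} (Zariski openness of the good parameter set, nonempty by the skew-representation specialization), combined with semisimplicity from the normality statements (Proposition~\ref{pr:normal}, its Corollary, and Proposition~\ref{semisimp}) on $\overline{M_{0,n+2}^{comp}}$. You merely spell out details the paper leaves implicit, namely the uniform choice $c=n+1$ matching the parameter form $\frac{l_i}{2}-\frac{r_i}{2}+ix_i$ for all factors at once, and the Zariski-density argument guaranteeing that the open set $I$ actually meets the real parameter locus.
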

\begin{proof}
From Lemma \ref{ta} we know that for all $X \in \overline{M_{0,n+2}}$ there is a Zariski open subset $I$ of $\bc^k$ such that $B(X)$ for all $X \in \overline{M_{0,n+2}}$  has a cyclic vector on $\bigotimes_{i=1}^k V_i(z_i)$. Then $I \cap \br^k$ is a Zariski open subset of $\br^k$ such that for all $X \in \overline{M_{0,n+2}}$ the subalgebra $B(X)$ has a cyclic vector on $\bigotimes_{i=1}^k V_i\left( \frac{l_i}{2} - \frac{r_i}{2} + i x_i\right)$. If we restrict ourselves to $X \in \overline{M_{0,n+2}^{comp}}$ then we also have the semismplicity of action of $B(X)$ from Proposition \ref{semisimp} which concludes the proof.
\end{proof}

\subsection{Split real form.}

\begin{theorem}\label{th:split}
Let $V_i, i = 1, \ldots, k$ be a set of skew representations of $Y(\fgl_n)$. Then, for $(x_1, \ldots, x_k)  \in \br^k$ from a Zariski open subset, the subalgebra $B(X)$ has simple spectrum on $\bigotimes_{i=1}^k V_i(x_i)$ for all $X \in \overline{M_{0,n+2}^{split}}$.
\end{theorem}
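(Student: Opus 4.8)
The plan is to follow the proof of Theorem~\ref{th:compact} almost verbatim, replacing the compact Hermitian structure of Proposition~\ref{pr:normal} by a split one which, pleasantly, turns out to be available for \emph{all} skew modules rather than only for Kirillov--Reshetikhin ones. The two inputs are a cyclic vector, supplied for generic real parameters by Theorem~\ref{main1}, and semisimplicity, which I will get from self-adjointness of $B(X)$ with respect to a positive-definite Hermitian form. Combining ``cyclic'' with ``semisimple'' yields simple spectrum, exactly as explained in Section~\ref{simple}.

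First I would prove the split analogue of Proposition~\ref{pr:normal}: for $C\in T^{reg}_{split}$ (a regular \emph{real} diagonal matrix) and skew modules $V_i$ at real evaluation parameters $x_i$, the subalgebra $B(C)$ acts by self-adjoint operators on $W=\bigotimes_i V_i(x_i)$, equipped with the tensor product of the standard positive-definite $U(\fgl_{n+k_i})$-unitary forms on the multiplicity spaces $V_i=M_{\lambda_i\mu_i}$. The key point is the reality identity $\cTone_{V_i(x_i)}(u)^{+}=\cTone_{V_i(x_i)}(u)^{t}$ for each factor. I would verify it on the centralizer realization $\eta_{k_i}=\ev\circ\omega_{n+k_i}\circ i_{k_i}$: there $\eta_{k_i}(T(u))$ is a rational function of the matrix $E$, and since $E^{+}=E^{t}$ in the unitary form (because $E_{AB}^{+}=E_{BA}$) while a real shift by $x_i$ keeps the coefficients real, one gets $\rho_{V_i(x_i)}(T(u))^{+}=\rho_{V_i(x_i)}(T(u))^{t}$. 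Because both $^{+}$ and $^{t}$ reverse the order of a product in the same way, the identity for the factors propagates through $\cTone_W(u)=\prod_i\cTone_{V_i(x_i)}(u)$ to $\cTone_W(u)^{+}=\cTone_W(u)^{t}$, hence to $\cTp_W(u)^{+}=\cTp_W(u)^{t}$ after antisymmetrization; tracing against the real matrix $\rho_{\Lambda^p(\bc^n)}(C)$ then gives $\tau_p(u,C)^{+}=\tau_p(u,C)$.

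Next I would extend semisimplicity from $T^{reg}_{split}$ to the whole real form, exactly as in the corollary following Proposition~\ref{pr:normal}: the condition $B(X)^{+}=B(X)$ cuts out a Zariski closed subset of $\overline{M_{0,n+2}^{split}}$ containing the dense subset $T^{reg}_{split}$, hence holds on all of $\overline{M_{0,n+2}^{split}}$, and a commuting family of normal operators is simultaneously diagonalizable, so $B(X)$ acts semisimply for every $X\in\overline{M_{0,n+2}^{split}}$. Finally, by Theorem~\ref{main1} there is a Zariski open $I\subset\bc^k$ on which $B(X)$ has a cyclic vector on $\bigotimes_i V_i(z_i)$ for all $X\in\overline{M_{0,n+2}}$; then $I\cap\br^k$ is Zariski open in $\br^k$, and for $(x_1,\dots,x_k)$ in it the subalgebra $B(X)$ is both cyclic and semisimple on $\bigotimes_i V_i(x_i)$ for every $X\in\overline{M_{0,n+2}^{split}}$, i.e. has simple spectrum.

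I expect the main obstacle to be the reality identity of the first step, namely $\rho_{V_i(x_i)}(T(u))^{+}=\rho_{V_i(x_i)}(T(u))^{t}$ with respect to the unitary form on $M_{\lambda_i\mu_i}$. This is the split counterpart of Proposition~\ref{semisimp}; unlike the compact computation, which involved the reflection $u\mapsto -u+c$ together with the inverse and thereby forced $V_i$ to be Kirillov--Reshetikhin, here there is neither a reflection nor an inverse, which is precisely why all skew modules are admissible---but one must still carry out the centralizer-construction computation of the conjugate of $\eta_{k_i}(T(u))$ carefully, keeping track of how $\omega_{n+k_i}$ and the restriction to the multiplicity space interact with $^{+}$ and $^{t}$.
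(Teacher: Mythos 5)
Your overall skeleton (cyclicity for generic real parameters from Theorem~\ref{main1}, semisimplicity from a Hermitian structure, then the closure argument) matches the general strategy, and your one-factor identity is in fact correct: for a skew module at a real evaluation parameter the compact form on $V_\lambda$ restricted to $M_{\lambda\mu}$ does satisfy $\rho(t_{ij}(u))^{+}=\rho(t_{ji}(u))$, by exactly the centralizer computation you sketch. The gap is in the propagation to tensor products. The operation $M\mapsto (M^{t})^{+}$ is an \emph{anti}-automorphism of $Mat_n\otimes\End(W)$, so applied to $\cTone_W(u)=\cTone_{V_1(x_1)}(u)\cdots\cTone_{V_k(x_k)}(u)$ it reverses the order of the factors: one obtains not $\cTone_W(u)$ but the $T$-operator of the opposite tensor product $V_k(x_k)\otimes\cdots\otimes V_1(x_1)$, i.e.\ the opposite coproduct, which differs from the coproduct by conjugation by the $R$-matrix. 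In the compact case (Proposition~\ref{pr:normal}) this reversal is harmless precisely because the one-factor relation there equates $((\cdot)^{t})^{+}$ with the \emph{inverse}, which also reverses order; in your split version nothing compensates. Moreover the needed statement is actually false, not merely unproved: take $n=2$, $W=\bc^2(x_1)\otimes\bc^2(x_2)$ with $x_1\ne x_2$ real, and $C=\diag(c_1,c_2)$ real regular. The operator $\Res_{u=x_1}\rho_W(\tau_1(u,C))$ lies in the image of $B(C)$, preserves the span of $v_1\otimes v_2$, $v_2\otimes v_1$, and acts there by
\begin{equation*}
\begin{pmatrix} c_1 & c_1/(x_1-x_2)\\ c_2/(x_1-x_2) & c_2\end{pmatrix},
\end{equation*}
which is normal for the product Hermitian form only if $c_1=c_2$. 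So for every $C\in T^{reg}_{split}$ the algebra $B(C)$ fails to act by normal operators in the naive product form, and your key step collapses.

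There are two ways to repair this. One is to twist the Hermitian form by the $R$-matrix intertwiner $W\to V_k(x_k)\otimes\cdots\otimes V_1(x_1)$, as is done for $Y(\fgl_2)$ in \cite{mash}; then self-adjointness holds, but positive definiteness of the twisted form becomes a separate nontrivial claim (requiring the real parameters to be sufficiently separated). The paper instead sidesteps tensor products entirely: by Proposition~\ref{skew} one can choose real parameters with $|x_i-x_j|\gg 0$ so that $\bigotimes_i V_i(x_i)$ \emph{is} a single skew module; by Proposition~\ref{isomor} the image of $B(X)$ is then the shift of argument subalgebra $\mA_X\subset U(\fgl_{n+k})^{\fgl_k}$, and Proposition~\ref{c2} (which rests on the Hermitian property of $\mA_X$ on a single irreducible $\fgl_{n+k}$-module, from \cite{ffr}, \cite{hkrw}, so no tensor-product form is ever needed) gives simple spectrum for all $X\in\overline{M_{0,n+2}^{split}}$ at this one choice of parameters; the completeness/projection argument of Theorem~\ref{main1} then spreads this to a Zariski open set of real parameters. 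Your concluding Zariski argument coincides with the paper's; it is the semisimplicity input that must be produced this other way.
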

\begin{proof}
The proof is analogous to the proof of Theorem \ref{main1}. There are parameters $(x_1, \ldots, x_n)$ such that $\bigotimes_{i=1}^k V_i(x_i)$ is isomorphic to a skew representation. Thus for such parameters for any $X \in \overline{M_{0,n+2}^{split}}$ subalgebra $B(X)$ acts as $\mA_X \subset U(\fgl_{n+k})^{\fgl_k}$, see Proposition \ref{isomor}. But the latter subalgebra acts with simple spectrum, see Proposition \ref{semisimp}. It means that we find at least one set of parameters $(x_1, \ldots, x_k)$ such that for all $X \in \overline{M_{0,n+2}^{split}}$ subalgebra $B(X)$ acts with simple spectrum. Using the fact that $\overline{M_{0,n+2}^{split}}$ is projective variety and hence complete we obtain the result analogous to Theorem \ref{main1}.

\end{proof}

\section{Further development and conjectures}
\label{further}
The cyclicity property implies that there are only finitely many joint eigenlines for the Bethe subalgebra $B(X)$ in the given $Y(\fgl_n)$-module. Following \cite{hkrw,R14} we can regard these joint eigenlines at once as a \emph{covering} over the Deligne-Mumford space $\overline{M_{0,n+2}}$ (possibly, ramified, as the operators from $B(C)$ can sometimes have non-trivial Jordan blocks). The Hermitian property implies that this covering is unramified over the corresponding real form of $\overline{M_{0,n+2}}$, so is determined by the action of the fundamental group on some fiber. We expect that this action of the fundamental group has a combinatorial description in terms of partial Schutzenberger involutions on Kirillov-Reshetikhin crystals in the compact case an in terms of Bender-Knuth involutions on Gelfand-Tsetlin polytopes in the split case. Let us give a more precise description of the combinatorial structures arising in these two cases.

One defines the fundamental groupoid of $\overline{M_{0,n+2}^{split}}=\overline{M_{0,n+2}}(\br)$ as follows. The objects are the components of the open stratum which are given by fixing the increasing order of the $z_i$'s. The mophisms are the homotopy classes of paths which connect some inner points of the components of the open stratum and cross the strata of codimension $1$ transversely. This groupoid is in fact the orbifold fundamental group of $\overline{M_{0,n+2}}(\br)/S_{n+1}$ usually denoted by $J_{n+1}$. Clearly, the group $J_{n+1}$ is generated by the homotopy classes of paths connecting \emph{neighboring} open cells (i.e. the open cells having common face of codimension $1$). Thus there are the following generators of $J_{n+1}$.

For positive integers $p\le q$, denote by $[p,q]$ the set $\{p,p+1,\ldots,q-1,q\}$. Let $\overline{s_{p,q}}\in S_{n+1}$ be the involution reversing the segment $[p,q]\subset[0,n]$. Denote by $s_{p,q}$ the element of $J_n$ corresponding to the shortest path connecting the cells which differ by $\overline{s_{p,q}}$ (they have a common face). Then the elements $s_{p,q}$ with $0\le p<q\le n$ generate $J_{n+1}$ and the defining relations are
\begin{equation*}
\begin{array}{l}s_{p,q}^2=e;\\
s_{p_1,q_1}s_{p_2,q_2}=s_{p_2,q_2}s_{p_1,q_1}\ \text{if}\ q_1<p_2;\\
s_{p_1,q_1}s_{p_2,q_2}s_{p_1,q_1}=s_{p_1+q_1-q_2,p_1+q_1-p_2}\ \text{if}\ p_1\le p_2<q_2\le q_1.
\end{array}
\end{equation*}
We refer the reader to \cite{hk} for more details.

The fundamental group $PJ_{n+1}:=\pi_1(\overline{M_{0,n+1}(\br)})$ is the kernel of the natural homomorphism $J_{n+1}\to S_{n+1}$ which maps $s_{p,q}$ to $\overline{s_{p,q}}$. By the analogy with braid groups, $PJ_{n+1}$ is called the \emph{pure cactus group}.

Similarly, one can define the fundamental groupoid for $\overline{M_{0,n+2}^{comp}}$ whose generators correspond to flipping a segment in the cyclic ordering of the marked points $z_1,\ldots,z_n$ and the relations are similar. It is natural to call this \emph{affine} cactus group.

\subsection{Compact case.}

According to Theorem~\ref{th:compact} for any collection of Kirillov-Reshetikhin modules $V_i$ there is a Zariski open dense subset $U\subset \bc^N$ such that for any $(z_1,\ldots,z_N)\in U$ the algebra $B(X)$ acts with simple spectrum on the tensor product $V_1(z_1)\otimes\ldots\otimes V_N(z_N)$ for all $X\in \overline{M_{0,n+2}^{comp}}$. In \cite{kmr} we define the structure of a Kirillov-Reshetikhin crystal on this spectrum following the strategy of \cite{hkrw}. One can write the generators of the fundamental groupoid of $\overline{M_{0,n+2}^{comp}}$ as the paths connecting two neighboring components of the open stratum which cross transversely the codimension~$1$ stratum at a generic point. Such generators correspond to the longest elements of Weyl groups for all connected Dynkin subdiagrams in the affine Dynkin diagram.     

We expect that the following is true:

\begin{conj}
The above generators of the fundamental groupoid of $\overline{M_{0,n+2}^{comp}}$ act on this KR crystal via partial Schutzenberger involutions assigned to any connected subdiagram of the affine Dynkin diagram $\tilde{A}_{n+1}$. 
\end{conj}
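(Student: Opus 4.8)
To approach this conjecture, the plan is to transport the strategy of \cite{hkrw} from the finite (Gaudin) setting to the Yangian/KR setting, using the factorization of Bethe subalgebras at the boundary established in Sections~4 and \ref{6}. First I would fix the identification, provided by \cite{kmr}, of the fiber of the spectral covering over the caterpillar point with the tensor product Kirillov--Reshetikhin crystal $\bigotimes_i B_i$, together with the description of the crystal operators in terms of the limiting Bethe spectrum. By Theorem~\ref{th:compact}, for generic $x_i$ the covering of joint eigenlines is unramified over $\overline{M_{0,n+2}^{comp}}$, so the monodromy is a genuine action of the fundamental groupoid on this fiber, and it suffices to compute the action of each generator. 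Each generator is the homotopy class of a short path joining two neighboring open cells and crossing a single codimension-$1$ stratum transversely; since $0$ and $\infty$ stay on the same component throughout $\overline{M_{0,n+2}^{comp}}$, only strata of the \emph{second} type occur, which simplifies the analysis considerably.

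Next I would localize the computation at such a wall. By Theorem~\ref{result2} and Proposition~\ref{ustr}, approaching a second-type wall degenerates $B(X)$ into a tensor product of the form $B(C_0)\otimes \bigotimes_i \mA_{C_i}$ in which exactly one factor $\mA_{C_i}\subset U(\fgl_{k_i})$ becomes singular while the remaining factors stay regular. Transverse passage through the wall therefore moves only the eigenlines of this single factor, so the local monodromy reduces to that of a smaller shift-of-argument problem on $U(\fgl_{k_i})$; via Proposition~\ref{quant} and Proposition~\ref{eval} this is exactly the Gaudin/shift-of-argument subalgebra attached to the connected arc of the cyclic arrangement (equivalently, the connected type-$A$ sub-diagram of $\tilde A_{n+1}$) that labels the colliding marked points. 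In this way the reversal of a segment in the cyclic order is matched with the longest element $w_0$ of the sub-Weyl group $S_{k_i}$.

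I would then invoke the finite result: for a single such factor the monodromy of the transverse crossing is the longest-element (Lusztig--Schutzenberger) involution on the corresponding crystal factor. Concretely, the two caterpillar degenerations on either side of the wall are related by the Cartan involution, and under the crystal identification of \cite{kmr} the Cartan involution $-w_0$ becomes the Schutzenberger involution; this is precisely the mechanism of \cite{hkrw}. Assembling these elementary partial involutions and checking that they satisfy the affine cactus relations --- which should be automatic, since the relations are imposed by the topology of $\overline{M_{0,n+2}^{comp}}$ and by the coboundary structure on the category of KR crystals --- would yield the claimed action of the generators.

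The main obstacle I expect is the final matching step: proving that the crossing acts by \emph{exactly} the longest-element involution, not merely by \emph{some} involution. The Hermitian structure of Section~\ref{6} forces each local monodromy to be an involution, but pinning it down combinatorially requires a rigidity argument --- identifying the Bethe spectra on the two sides of the wall with the same crystal and showing the gluing is the coboundary associator. Equivalently, one must verify that the KR-crystal structure of \cite{kmr} is compatible with the factorization of $B(X)$ at the wall in the strong sense needed to identify the reversal of a segment with the partial Schutzenberger involution. This compatibility is the genuinely new input beyond \cite{hkrw}: the cyclic arrangement of the $n+2$ marked points embeds type-$A$ sub-diagrams into $\tilde A_{n+1}$ in configurations (wrapping around the circle) that have no finite analog, so the base-case analysis and the bookkeeping of which crystal factor is affected must be redone directly in the affine setting.
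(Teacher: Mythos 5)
You should be aware that the paper does not prove this statement at all: it is stated as a conjecture in Section~\ref{further}, prefaced by ``We expect that the following is true,'' and even the prerequisite identification of the spectrum with a Kirillov--Reshetikhin crystal is deferred to \cite{kmr}, which is cited as \emph{in preparation}. So there is no proof in the paper to compare against, and your text, by its own admission, is a strategy outline rather than a proof. That outline is reasonable and matches the route the authors themselves indicate (unramifiedness of the covering from Theorem~\ref{th:compact}, wall-crossing at codimension-one strata, factorization of limit Bethe subalgebras via Theorem~\ref{result2} and Proposition~\ref{ustr}, and the mechanism of \cite{hkrw}); your observation that only second-type strata occur over $\overline{M_{0,n+2}^{comp}}$ is also consistent with Section~\ref{compact}.

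The genuine gap is exactly the step you flag at the end, and it is not a technicality but the entire content of the conjecture: identifying the local monodromy at a second-type wall with the partial Schutzenberger involution on the corresponding KR-crystal factor. You propose to ``invoke the finite result'' of \cite{hkrw}, but no such result is available in the form you need. The theorem of \cite{hkrw} concerns shift-of-argument (Gaudin-type) subalgebras acting on representations of a semisimple Lie algebra, where the fibers are normal highest-weight crystals and the Schutzenberger involution is defined through $w_0$ and the Cartan involution; here the fibers are tensor products of KR crystals, which are not highest-weight crystals, the relevant subdiagrams of $\tilde{A}_{n+1}$ can wrap around the affine node with no finite analog, and the local algebra at the wall is a limit Bethe subalgebra of the Yangian rather than a Gaudin algebra. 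Moreover, the compatibility you require between the crystal structure of \cite{kmr} and the factorization of $B(X)$ at the wall cannot be verified, since \cite{kmr} is not yet available. So the proposal reduces the conjecture to statements that are themselves unproven (and in part unpublished), rather than proving it; as a roadmap it is sound, as a proof it is not one.
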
 


\subsection{Split case.} Let $V$ be a tensor product of skew representations of the Yangian $Y(\fgl_n)$ with sufficiently distant real parameters, as in Theorem~\ref{th:split}. Then according to Theorem~\ref{th:split} the algebra $B(X)$ acts on $V$ with simple spectrum for all $X\in \overline{M_{0,n+2}(\br)}$. This gives an unramified covering $\mathcal{E}_V$ over $\overline{M_{0,n+2}(\br)}$ whose fiber over $X\in \overline{M_{0,n+2}(\br)}$ is the set of eigenlines for $B(X)$ on $V$. 

\begin{remark}
\emph{This agrees with the results of Nazarov and Tarasov \cite{nt} stating that the Gelfand-Tsetlin subalgebra (or the Cartan subalgebra of the Yangian) generated by the elements $A_i(u)$ in Drinfeld's new realization act with simple spectrum on the same class of representations. Namely, the Gelfand-Tsetlin subalgebra $H$ is $B(X_0)$ for $X_0$ being the caterpillar point of $\overline{M_{0,n+2}(\br)}$.}
\end{remark}

The eigenbasis for $H=B(X_0)$ in a skew representation of $Y(\fgl_n)$ is indexed by keystone Gelfand-Tsetlin patterns (or, equivalently, by semistandard skew Young tableaux). So it is convenient to choose $X_0$ as the base point and regard the monodromy of the eigenlines as combinatorial transformations of the Gelfand-Tsetlin patterns. Moreover, we can assign to any connected component of the open stratum of $\overline{M_{0,n+2}^{split}}$ a caterpillar point in its closure, the only one with the given ordering of marked points. We expect that the following is true: 

\begin{conj}
The paths along $1$-dimensional strata connecting the caterpillar points generate the fundamental group $J_{n+1}$ and the action of these generators on the set of semistandard skew tableau is given by Bender-Knuth involutions (see \cite{bk} for definitions).
\end{conj}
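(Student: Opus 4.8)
The plan is to prove the two assertions — generation of $J_{n+1}$ by the caterpillar paths, and the identification of their action with Bender--Knuth involutions — separately, using as a common foundation the unramified covering furnished by Theorem~\ref{th:split}. For generic real $x_i$ the set $\mathcal{E}_V$ of $B(X)$-eigenlines is an unramified cover of $\overline{M_{0,n+2}^{split}}$, so monodromy along any path gives a canonical bijection of fibers and hence an action of the fundamental groupoid based at the caterpillar points. Over each caterpillar point $X_0$ we have $B(X_0)=H$, the Gelfand--Tsetlin subalgebra, whose eigenlines are canonically indexed by \cite{nt} via keystone Gelfand--Tsetlin patterns, i.e. semistandard skew tableaux of shape $\lambda\setminus\mu$; fixing this labelling at every caterpillar point turns each monodromy bijection into a transformation of skew tableaux.

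For the generation statement I would argue from the stratification of $\overline{M_{0,n+2}^{split}}$. The $0$-dimensional strata are exactly the caterpillar points, one lying in the closure of each top-dimensional cell — the cells being the orderings of the marked points $0,z_1,\ldots,z_n$ up to reversal. A $1$-dimensional stratum is an arc carrying a single real modulus whose two bounding caterpillar points differ by flipping the four special points on the one non-rigid component, and such a flip realizes an elementary generator $s_{p,q}$. The content of the first assertion is then that the $(0,1)$-skeleton of the stratification is a connected graph whose based loops are generated, modulo the relations visible on the $2$-dimensional strata, by these edge flips; this is the real-locus counterpart of the presentation of the cactus group analysed in \cite{hk}, and the three relations $s_{p,q}^2=e$, the commutation relation and the nesting relation correspond one-to-one to the three local types of codimension~$2$ boundary.

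The substantive assertion is the identification of the monodromy, for which I would follow the strategy of \cite{hkrw} and \cite{cgp}. First, one shows that the monodromy respects the crystal structure on the spectrum constructed in \cite{kmr}: each caterpillar path acts by an automorphism of the appropriate sub-crystal, and the whole $J_{n+1}$-action coincides with the coboundary (commutor) action attached to that crystal. Locality is the key tool: by Theorem~\ref{result1} and Proposition~\ref{ustr} the limit Bethe subalgebra along a $1$-dimensional stratum factors into a part read off from the rigid components — which reproduces the ambient Gelfand--Tsetlin coordinates and is fixed along the arc — and a part supported on the single non-rigid component, and only through this latter part does the monodromy act. This reduces the computation of an elementary flip to a minimal-rank model ($Y(\mathfrak{gl}_2)$, resp. a $\mathfrak{gl}_3$ shift-of-argument subalgebra on $\overline{M_{0,4}}$), where the monodromy is a shape-preserving involution of an adjacent pair of rows of the Gelfand--Tsetlin pattern and the two colliding Bethe roots are exchanged. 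In crystal terms this involution is a partial Schützenberger involution, and by the analysis of \cite{cgp} the partial Schützenberger involutions on semistandard tableaux lie in the Berenstein--Kirillov group generated by the Bender--Knuth involutions of \cite{bk}, with the elementary flip of a length-two segment acting by a single Bender--Knuth involution.

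The principal obstacle is exactly the step asserting that the analytically defined monodromy coincides with the combinatorial commutor. A priori the monodromy across a wall is only known to be \emph{some} shape- and bounding-row-preserving involution, and identifying it with the specific reflection requires genuine input: either a direct computation in the rank-two local model, tracking the colliding Bethe roots as the modulus traverses the real arc (the Hermitian and simple-spectrum properties from \cite{ffr,r06} guaranteeing that no collision occurs in the interior and controlling the limits at the endpoints, as already carried out for $Y(\mathfrak{gl}_2)$ in \cite{mash}), or an abstract rigidity theorem for the coboundary structure on the spectrum analogous to the uniqueness employed in the compact case. Establishing such rigidity in the split setting, and in particular verifying that the Gelfand--Tsetlin degeneration is genuinely the realization on which Bender--Knuth — rather than a twisted variant — acts, is where I expect the real difficulty to lie.
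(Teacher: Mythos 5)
You are attempting to prove a statement that the paper itself does not prove: it is explicitly stated as a conjecture in Section~\ref{further} (``We expect that the following is true''), and the authors offer no argument beyond the remark that \cite{cgp} shows the Bender--Knuth involutions generate a quotient of the cactus group, which the conjecture would explain geometrically. So there is no paper proof to compare against, and the question is whether your proposal closes the conjecture. It does not, and you correctly identify where it fails: the entire content of the conjecture is the identification of the analytically defined wall-crossing monodromy with a specific combinatorial involution, and your proposal leaves exactly that step open. Everything before it --- the unramified covering from Theorem~\ref{th:split}, the labelling of the fibers over caterpillar points by keystone Gelfand--Tsetlin patterns via \cite{nt}, the reduction of an elementary flip to a low-rank local model using the factorization of limit subalgebras in Theorem~\ref{result1} and Proposition~\ref{ustr} --- is the same framework the authors set up in Section~\ref{further}, and your first assertion (generation of $J_{n+1}$ by paths along $1$-strata, with relations read off the $2$-strata) is a plausible and essentially combinatorial consequence of the cell structure of $\overline{M_{0,n+2}^{split}}$ as in \cite{hk,ceyhan}, though you only sketch it.

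Two concrete points in the substantive half would fail as written. First, the locality argument needs more than the factorization of the limit subalgebra at points \emph{of} the $1$-stratum: you must show that the eigenline labelling extends continuously over the closed arc, that the monodromy is trivial on the ``frozen'' Gelfand--Tsetlin coordinates, and that the boundary identification of fibers at the two caterpillar endpoints is the one induced by the Nazarov--Tarasov labelling at each endpoint separately --- none of this is automatic from simplicity of spectrum, which only gives an unramified cover, not a preferred trivialization near the boundary. Second, the rank-two local computation you defer to is genuinely absent from the literature you cite: \cite{mash} proves simplicity of spectrum for $Y(\fgl_2)$, not the combinatorial identification of the monodromy, and \cite{cgp} is purely combinatorial (it shows partial Sch\"utzenberger involutions lie in the Berenstein--Kirillov group, but says nothing about Bethe monodromy); the analogous identification in \cite{hkrw} is carried out for shift-of-argument subalgebras and crystal commutors, not for the split-form Yangian setting, and whether the Gelfand--Tsetlin degeneration realizes Bender--Knuth on the nose or only up to a twist is precisely the open question. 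Your proposal is thus an accurate roadmap for the conjecture --- essentially the one the authors themselves envision --- but not a proof, and the gap you flag in your final paragraph is not a technical loose end but the whole difficulty.
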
 

In \cite{cgp} Chmutov, Glick and Pylyavskyy show that Bender-Knuth involutions generate some quotient of the cactus group. We hope that the above conjecture gives the geometric explanation of this fact, which is just rewriting the cactus groupoid in dual terms (of vertices and edges instead of the open cells and codimension one faces).


\newpage
\bigskip
\noindent \footnotesize{{\bf Aleksei Ilin} \\
National Research University 
Higher School of Economics, \\
Russian Federation,\\
Department of Mathematics, 6 Usacheva st, Moscow 119048;\\
{\tt alex.omsk2@gmail.com}} \\
\\
\footnotesize{
{\bf Inna Mashanova-Golikova} \\
National Research University
Higher School of Economics,\\ Russian Federation,\\
Department of Mathematics, 6 Usacheva st, Moscow 119048 \\
{\tt inna.mashanova@gmail.com}} \\
\\
\footnotesize{
{\bf Leonid Rybnikov} \\
National Research University
Higher School of Economics,\\ Russian Federation,\\
Department of Mathematics, 6 Usacheva st, Moscow 119048;\\
Institute for Information Transmission Problems of RAS;\\
{\tt leo.rybnikov@gmail.com}}

\end{document}